\documentclass[reqno]{amsart}

\usepackage{graphicx,subfigure}

\numberwithin{equation}{section}

\usepackage[latin1]{inputenc}
\usepackage[english]{babel}

\usepackage{amsmath,amsthm,amsfonts,latexsym,amssymb}
\usepackage[colorlinks]{hyperref}
\hypersetup{linkcolor=blue,citecolor=blue,filecolor=black,urlcolor=blue}
\usepackage{comment}

\usepackage{color,amsthm,amsfonts}
\definecolor{darkgreen}{rgb}{0,0.7,0.1}

{ \theoremstyle{plain}
\newtheorem{theorem}{Theorem}[section]
\newtheorem{proposition}[theorem]{Proposition}
\newtheorem{lemma}[theorem]{Lemma}

  \theoremstyle{remark}
\newtheorem{remark}[theorem]{Remark}
  \theoremstyle{definition}

}


\def\R{\mathbb{R}}

\def\N{\mathbb{N}}

\def\eps{\varepsilon}

\def\eps{\varepsilon}

\begin{document}
\subjclass[2010]{35J92, 35B09, 35B40, 35B45, 35A15.}

\keywords{Quasilinear elliptic equations, Sobolev-supercritical nonlinearities, Neumann boundary conditions, Variational methods.}

\title[A $p$-Laplacian supercritical Neumann problem]{A $p$-Laplacian supercritical Neumann problem}

\author[F. Colasuonno]{Francesca Colasuonno}
\author[B. Noris]{Benedetta Noris}
\address{D\'epartement de Math\'ematique\newline\indent
Universit\'e Libre de Bruxelles
\newline\indent
Campus de la Plaine - CP214
boulevard du Triomphe - 1050 Bruxelles, Belgique}

\email{francesca.colasuonno@ulb.ac.be}
\email{benedettanoris@gmail.com}

\date{\today}

\begin{abstract}
For $p>2$, we consider the quasilinear equation $-\Delta_p u+|u|^{p-2}u=g(u)$ in the unit ball $B$ of $\mathbb R^N$, with homogeneous Neumann boundary conditions. The assumptions on $g$ are very mild and allow the nonlinearity to be possibly supercritical in the sense of Sobolev embeddings. We prove the existence of a nonconstant, positive, radially nondecreasing solution via variational methods. In the case $g(u)=|u|^{q-2}u$, we detect the asymptotic behavior of these solutions as $q\to \infty$. 
\end{abstract}

\maketitle

\section{Introduction}
For $p>2$, we consider the following Neumann problem
\begin{equation}\label{Pg}
\begin{cases}
-\Delta_p u+u^{p-1}=g(u)\quad&\mbox{in }B,\\
u>0\quad&\mbox{in }B,\\
\partial_\nu u=0\quad&\mbox{on }\partial B.
\end{cases}
\end{equation}
Here $B$ is the unit ball of $\mathbb R^N$, $N\ge1$, and $\nu$ is the outer unit normal of $\partial B$.
We aim to investigate the existence of nonconstant solutions of \eqref{Pg} under very mild assumptions on the nonlinearity $g$, allowing in particular for Sobolev-supercritical growth.

Quasilinear equations with Neumann boundary conditions and subcritical nonlinearities in the sense of Sobolev embeddings have been studied in several papers, among which we refer to \cite{Aizicovici2, Anello, Bonanno, Boscaggi, Faraci,Filippakis,Motreanu,Ricceri, WuTan} and the references therein. 

When $g$ has supercritical growth, a major difficulty in analyzing the existence of solutions of \eqref{Pg} is that, due to the absence of Sobolev embeddings, the energy functional associated to the equation is not well defined in $W^{1,p}(B)$, and so, a priori, it is not possible to apply variational methods. Nonetheless, the problem \eqref{Pg} with the prototype nonlinearity $g(u)=u^{q-1}$ admits the constant solution $u\equiv 1$ for every $q\in(1,\infty)$. This marks a difference with respect to the analogous problem under homogeneous Dirichlet boundary conditions, in which the Poho\v{z}aev identity is an insurmountable obstruction to the existence of non-zero solutions when $q\ge p^*$ (see \cite[Section 2, pp. 685-686]{PS85}). Thus it is a natural question to ask whether \eqref{Pg} also admits nonconstant solutions. 

This question has been tackled in the case $p=2$ and a positive answer has been given in \cite{SerraTilli2011, BonheureSerra2011,BNW,GrossiNoris}. Multiplicity results have been obtained in \cite{BonheureGrossiNorisTerracini2015,BonheureCasterasNoris2016}. The strategy used in \cite{SerraTilli2011, BNW} to obtain existence is that of establishing a priori estimates in some special classes of solutions of \eqref{Pg}. This, in turn, allows to provide a variational characterization of the problem in the Sobolev space. On the other hand, in \cite{BonheureSerra2011, GrossiNoris}, existence is proved by a perturbative method. In \cite{BonheureGrossiNorisTerracini2015, BonheureCasterasNoris2016} the authors apply both a priori estimates and perturbative methods to have multiplicity results. Topological methods have been used in \cite{bonheure2013radial} for a related problem.

The case $p\neq2$ with a supercritical nonlinearity has been treated by S. Secchi in \cite{secchi2012increasing}, where the right-hand side of the equation in \eqref{Pg} is of the type $a(x)g(u)$, with $a(x)$ nonconstant. Our paper aims to extend the results in \cite{secchi2012increasing} to the case $a$ constant and $p>2$. We remark that our method differs from the one in \cite{secchi2012increasing}: whereas S. Secchi adapts to the case $p\neq2$ the techniques introduced in \cite{SerraTilli2011}, we take inspiration from the techniques developed in \cite{BNW}.

We also mention that the existence and multiplicity of solutions to supercritical $p$-Laplacian problems under homogeneous Dirichlet boundary conditions have been studied in several papers, see for instance \cite{Drabek, Clapp16, massa} and the references therein.

In order to state our main result, let us introduce our assumptions on $g$. We assume that $g:[0,\infty)\to\mathbb R$ is of class $C^1([0,\infty))$ and satisfies the following hypotheses

\begin{itemize}
\item[$(g_1)$] $\lim_{s\to 0^+}\frac{g(s)}{s^{p-1}}\in[0,1)$;
\item[$(g_2)$] $\liminf_{s\to\infty}\frac{g(s)}{s^{p-1}}>1$;
\item[$(g_3)$] $\exists$ a constant $u_0>0$ such that $g(u_0)=u_0^{p-1}$ and $g'(u_0)>(p-1)u_0^{p-2}$.
\end{itemize}
\smallskip

We remark that by the regularity of $g$ and by $(g_1)$ and $(g_2)$ we immediately have the existence of an intersection point $u_0>0$ between $g$ and the power function $s^{p-1}$, with $g'(u_0)\ge (s^{p-1})'(u_0)= (p-1)u_0^{p-2}$. Hence, condition $(g_3)$ is only needed to prevent the {\it degenerate} situation in which $g'(u_0)=(p-1)u_0^{p-2}$ at all intersection points $u_0$ such that $g(s)<s^{p-1}$ for $s\in [u_0-\varepsilon,u_0)$ and $g(s)>s^{p-1}$ for $s\in(u_0,u_0+\varepsilon]$.

\begin{theorem}\label{thm:main}
Let $p>2$ and $g \in C^1([0,\infty))$ satisfy assumptions $(g_1)$-$(g_3)$. Then there exists a positive, nonconstant, radial, nondecreasing solution of \eqref{Pg}.

In addition, if $u_{0,1},\ldots,u_{0,n}$ are $n$ different positive constants satisfying $(g_3)$, then \eqref{Pg} admits $n$ different positive, nonconstant, radial, nondecreasing solutions.
\end{theorem}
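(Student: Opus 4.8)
The plan is to work in the cone of nonnegative, radial, nondecreasing $W^{1,p}$ functions and exploit the key structural fact that such functions are automatically bounded in $L^\infty$ by a constant times their $W^{1,p}$ norm. Concretely, if $u$ is radial and nondecreasing, then $\|u\|_{L^\infty(B)} = u(1)$, and a standard one-dimensional estimate (integrating the radial derivative and using monotonicity, as in \cite{SerraTilli2011, BNW}) gives a bound $u(1) \le C \|u\|_{W^{1,p}(B)}$, with $C$ depending only on $N$ and $p$. This $L^\infty$ control is what rescues the variational approach: even though $g$ is only defined on $[0,\infty)$ and may be supercritical, on this cone we may freely truncate $g$ for large values of $s$ without changing the problem, because any solution we produce will have an a priori bound on $u(1)$ coming from the energy level. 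So the first step is to replace $g$ by a subcritical (say, bounded or linearly-growing) modification $\tilde g$ that agrees with $g$ on $[0, M]$ for a suitable large $M$, and set up the $C^1$ energy functional
\begin{equation}\label{eq:energy}
J(u) = \frac{1}{p}\int_B \bigl(|\nabla u|^p + |u|^p\bigr)\,dx - \int_B \tilde G(u)\,dx, \qquad \tilde G(s) = \int_0^s \tilde g(t)\,dt,
\end{equation}
on $W^{1,p}_{\mathrm{rad}}(B)$, restricted to the closed convex cone $\mathcal{C}$ of nonnegative nondecreasing radial functions. One checks that $\mathcal{C}$ is weakly closed and that critical points of $J$ restricted to $\mathcal{C}$ are weak solutions of the (modified) Neumann problem, using that the associated variational inequality, combined with monotonicity of the $p$-Laplacian and the radial monotone structure, forces the inequality to be an equality — this is a cone-projection / Szulkin-type argument.

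The second step is to find a nonconstant critical point via the mountain pass geometry around the constant $u_0$ from $(g_3)$. Hypothesis $(g_3)$ says precisely that $u_0$ is a strict local \emph{maximum} of the fibering/reduced functional in the direction of the constants (because $g'(u_0) > (p-1)u_0^{p-2}$ makes the second variation of $J$ at the constant $u_0$ negative along the constant direction), while $(g_1)$ guarantees that small functions have positive energy relative to a suitable reference, and $(g_2)$ provides a direction along which $J \to -\infty$, or at least drops below $J(u_0)$, so that $u_0$ is separated from both "ends" by a mountain pass. One then applies the mountain pass theorem (in the Szulkin framework for functionals restricted to a convex set, or after noting $\mathcal{C}$ is a natural constraint) to produce a critical point $u^*$ at a level $c > J(u_0)$; since $J(u_0)$ is a strict local max along constants and $c$ strictly exceeds it, $u^*$ is nonconstant. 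Positivity ($u^* > 0$ in $B$, not merely $\ge 0$) follows from the strong maximum principle for the $p$-Laplacian (Vázquez), once we know $u^* \not\equiv 0$; and $u^* \not\equiv 0$ because $J(0) = 0 < c$ after arranging the geometry, or because $0$ is also a local object that the mountain pass path avoids. Regularity $u^* \in C^1$ comes from standard quasilinear regularity theory, and then $u^*$ is a classical radial solution.

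The crucial third step — and the one I expect to be the main obstacle — is closing the loop on the truncation: we must show that the mountain pass level $c$ (hence $\|u^*\|_{W^{1,p}}$, hence $u^*(1) = \|u^*\|_{L^\infty}$ via the radial monotone embedding) is bounded by a constant independent of the truncation parameter $M$, so that choosing $M$ large enough makes $\tilde g = g$ on the range of $u^*$ and $u^*$ solves the original problem. The natural way to get such an a priori bound is to construct an explicit competitor path joining the relevant low-energy configurations through $u_0$ whose maximal energy along the path is bounded purely in terms of $g$ near $u_0$ and the geometry of $B$ — for instance a path of radial functions interpolating between a small constant (or a function concentrated near the center) and a large constant, passing through $u_0$, along which $J$ stays below a level depending only on $u_0, g'(u_0), N, p$. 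Because $(g_2)$ only gives $\liminf_{s\to\infty} g(s)/s^{p-1} > 1$ rather than a pointwise lower bound, some care is needed to make the "descending" end of the path land at finite energy, but a suitable large constant $\bar u$ (large enough that $\tilde G(\bar u)/\bar u^p$ exceeds $1/p$) works and its energy is controlled. Once this $M$-independent bound is in hand, the theorem's first assertion follows.

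For the multiplicity statement, one runs the above construction separately near each prescribed constant $u_{0,i}$ satisfying $(g_3)$. Ordering them $u_{0,1} < \dots < u_{0,n}$, one obtains for each $i$ a mountain pass solution $u_i^*$ whose $L^\infty$ norm (equivalently, value at the boundary) lies in a range dictated by the corresponding pair of local extrema of the constant-direction reduced functional — roughly, $u_i^*(1)$ exceeds $u_{0,i}$ but one can localize the construction (e.g.\ by an additional one-sided truncation of $g$ below $u_{0,i-1}$ and above $u_{0,i+1}$, or by a linking/min-max over a restricted class) so that distinct indices give distinct $L^\infty$ values; since $u_i^*(1) = \|u_i^*\|_{L^\infty}$ for nondecreasing radial functions, the $u_i^*$ are pairwise distinct. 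The main technical point here, beyond repeating the a priori estimate $n$ times, is ensuring the localized min-max levels do not collapse onto one another — this again reduces to the same uniform-in-truncation energy estimate, now applied on each "slab" of values between consecutive $u_{0,i}$'s.
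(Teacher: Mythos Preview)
Your proposal has the overall architecture right --- work in the cone $\mathcal{C}$, truncate, mountain pass, then undo the truncation --- but the nonconstancy step is wrong in a way that inverts the main difficulty.

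You claim that the mountain pass level satisfies $c > J(u_0)$ and that this rules out $u^* \equiv u_0$. But $(g_3)$ makes $u_0$ a strict local \emph{maximum} of $t \mapsto J(t)$ along the constants, so the constant path joining the two ends (from $U_-$ to $U_+$, or from a small constant to a large one) is admissible and has maximum exactly $J(u_0)$. Hence $c \le J(u_0)$ trivially, and the danger is that $c = J(u_0)$ with the mountain pass solution being $u_0$ itself. The actual work is to prove the \emph{strict} inequality $c < J(u_0)$, and this is where $p>2$ enters decisively: the paper constructs a perturbed path $\gamma_s(t)=t(u_0+sv)$ with $v$ radial, nondecreasing, $\int_B v=0$, and uses a second-order Taylor expansion of $J$ at $u_0$ (requiring $J\in C^2$, i.e.\ $p>2$) together with an implicit-function argument on the Nehari constraint to show $\max_t J(\gamma_s(t)) < J(u_0)$ for small $s\neq 0$. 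Without this, you have no mechanism to exclude $u^*\equiv u_0$.

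Two secondary points. First, your ``third step'' (bounding the mountain pass level independently of the truncation parameter $M$) is unnecessary as stated: the paper obtains the $L^\infty$ bound directly from the PDE, uniformly over a whole class $\mathfrak{F}$ of nonlinearities (integrate the radial equation, use $\varphi\ge 0$ and $\varphi(s)\ge (m+\delta)s^{p-1}$ for large $s$), so the truncation height is fixed once and for all \emph{before} any variational argument, and every $\mathcal{C}$-solution of the truncated problem automatically solves the original. Second, your assertion that ``critical points of $J$ restricted to $\mathcal{C}$ are weak solutions'' via a Szulkin/variational-inequality argument is precisely the route of \cite{SerraTilli2011,secchi2012increasing}, which needs extra hypotheses on $g$; the paper avoids this by building a cone-preserving descending flow (via the resolvent $\tilde T(u)=(-\Delta_p+m|\cdot|^{p-2}\cdot)^{-1}\tilde f(u)$, showing $\tilde T(\mathcal{C}_*)\subset \mathcal{C}_*$, and then a Lipschitz approximation $K$), so that the deformation lemma runs inside $\mathcal{C}_*$ and the critical point obtained is a genuine critical point of $J$ on all of $W^{1,p}(B)$.

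For the multiplicity, your idea of localizing by one-sided truncations is morally right; the paper implements it by restricting to the sub-cones $\mathcal{C}_*^{(i)}=\{u\in\mathcal{C}: u_-^{(i)}\le u\le u_+^{(i)}\}$, where $u_\pm^{(i)}$ are the adjacent constant solutions, which automatically separates the solutions.
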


Our starting point to prove Theorem \ref{thm:main} is to work in the cone of nonnegative, radial, nondecreasing functions  
\begin{equation}\label{cone}
\mathcal C:=\{u\in W^{1,p}_{\mathrm{rad}}(B)\,:\, u\ge0,\,u(r)\le u(s) \mbox{ for all }0<r\le s\le1\},
\end{equation}
introduced by  Serra and Tilli in \cite{SerraTilli2011}, where with abuse of notation we write $u(|x|):=u(x)$. The main advantage of working in this set is the fact that all solutions of \eqref{Pg} belonging to $\mathcal C$ are a priori bounded in $W^{1,p}(B)$ and in $L^\infty(B)$. 
Two strategies are available in literature. The first one, see  \cite{SerraTilli2011, secchi2012increasing}, consists in defining the energy functional $I:\mathcal C\to\mathbb R$  associated to the equation and to find a critical point $u$ of $I$, that is to say 
$$u\in\mathcal C\quad\mbox{such that  }I'(u)[\varphi]=0\quad\mbox{for all } \varphi\in\mathcal C.$$  
This does not imply that $u$ is a weak solution of the problem. Under additional hypotheses on the nonlinearity $g$, the authors prove that it actually is.  

In order to weaken the assumptions on the nonlinearity $g$, we follow a different strategy, see \cite{BNW}. Thanks to the a priori estimates on the solutions of \eqref{Pg} belonging to $\mathcal C$, we are allowed to truncate the nonlinearity $g$. Thus, we deal with a new problem involving a Sobolev-subcritical nonlinearity, with the property that all solutions of the new problem belonging to $\mathcal C$ solve also the original problem \eqref{Pg}. In this way, the energy functional $I$ associated to the truncated problem is well defined in the whole of $W^{1,p}(B)$. To get a solution of \eqref{Pg}, we prove that a mountain pass type theorem holds inside the cone $\mathcal C$. The main difficulty here is the construction of a descending flow that preserves $\mathcal C$.

Once the mountain pass solution is found, we need to prove that it is nonconstant. We further restrict our cone, working in a subset of $\mathcal C$ in which the only constant solution of \eqref{Pg} is the positive constant $u_0$ defined in $(g_3)$. In this set, we build an admissible curve on which the energy is lower than the energy of the constant $u_0$, which gives immediately that the mountain pass solution is not identically equal to $u_0$. 
We remark that this part of the proof heavily relies on the fact that $I$ is of class $C^2$, thus it cannot be generalized to the case $1<p<2$.

In the case in which there is more than one constant $u_0$ satisfying condition $(g_3)$, we work in a restricted cone in order to localize the  mountain pass solution. This allows us to prove the multiplicity result stated in Theorem \ref{thm:main}.

We remark that in the setting $p=2$ our hypoteses are slightly more general than the ones in \cite{BNW}. More precisely, in \cite{BNW} it is required, for $p=2$,
\[
g(s) \textrm{ nondecreasing, } \lim_{s\to 0^+}\frac{g(s)}{s}=0.
\]
Hence our proof also provides the following generalization of  \cite[Theorem 1.3]{BNW}.

\begin{theorem}\label{thm:p=2}
Let $p=2$.  Let $g \in C^1([0,\infty))$ satisfy $(g_1)$, $(g_2)$ and 
\begin{itemize}
\item[$(g_3')$] $\exists$ a constant $u_0>0$ such that $g(u_0)=u_0$ and $g'(u_0)>\lambda_2^{rad}$,
\end{itemize}
where $\lambda_2^{rad}$ is the second radial eigenvalue of $-\Delta+\mathbb I$ in $B$ with Neumann boundary conditions. Then there exists an increasing radial solution of \eqref{Pg}.

In addition, if $u_{0,1},\ldots,u_{0,n}$ are $n$ different positive constants satisfying $(g_3')$, then \eqref{Pg} admits $n$ different increasing radial solutions.
\end{theorem}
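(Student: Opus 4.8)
\emph{Proof proposal for Theorem~\ref{thm:p=2}.} The plan is to repeat, with $p=2$, the argument developed for Theorem~\ref{thm:main}, and to isolate the single point where hypothesis $(g_3)$ must be replaced by $(g_3')$. One works in the cone $\mathcal C$ of \eqref{cone} with $p=2$, where solutions of \eqref{Pg} are \emph{a priori} bounded in $L^\infty(B)$ and in $W^{1,2}(B)$; this permits truncating $g$ above the \emph{a priori} bound to a Sobolev-subcritical nonlinearity $\tilde g$ that agrees with $g$ on the relevant range and such that every solution in $\mathcal C$ of the truncated problem still solves \eqref{Pg}. The energy
\[
I(u)=\tfrac12\int_B\bigl(|\nabla u|^2+u^2\bigr)\,dx-\int_B\tilde G(u)\,dx,\qquad \tilde G(t):=\int_0^t\tilde g,
\]
is then of class $C^2$ on $W^{1,2}_{\mathrm{rad}}(B)$ --- here no regularity is lost because $p=2$, so the parts of the proof of Theorem~\ref{thm:main} that exploit $I\in C^2$ carry over unchanged. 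By $(g_1)$, $u\equiv0$ is a strict local minimum of $I$ with $I(0)=0$; by $(g_2)$, $I\to-\infty$ along large constants. The mountain pass scheme \emph{inside the cone} $\mathcal C$ (the cone-preserving descending flow together with the corresponding minimax principle, exactly as in \cite{BNW} and in the proof of Theorem~\ref{thm:main}) then yields a nontrivial critical point $u\in\mathcal C$ of $I$, which by $(g_1)$ and the strong maximum principle is a positive solution of \eqref{Pg}.

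It remains to show that $u$ is nonconstant, and this is where $(g_3')$ enters instead of $(g_3)$. As for Theorem~\ref{thm:main}, one localizes the minimax in a subcone $\mathcal C'\subset\mathcal C$ in which $u_0$ (the constant from $(g_3')$) is the \emph{unique} positive constant solution of \eqref{Pg}, so that it suffices to prove that the mountain pass level $c$ is strictly below $I(u_0)$. The path of constants from $0$ to a large value is admissible and attains $\max I=I(u_0)$ only at the constant $u_0$; hence what is needed is an admissible detour around $u_0$ on which $I<I(u_0)$. Writing $u=u_0+v$ and using $g(u_0)=u_0$, one has $I'(u_0)[v]=(u_0-g(u_0))\int_B v\,dx=0$ and
\[
I''(u_0)[v,v]=\int_B|\nabla v|^2\,dx+\bigl(1-g'(u_0)\bigr)\int_B v^2\,dx,
\]
so the Dirichlet term now contributes at the \emph{same} quadratic order as the reaction term --- in sharp contrast with $p>2$, where it is of order $p>2$, hence negligible, which is exactly why there $(g_3)$ only asks $g'(u_0)>(p-1)u_0^{p-2}=1$. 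One therefore takes $v=\varphi$, the second radial Neumann eigenfunction of $-\Delta+\mathbb{I}$ (eigenvalue $\lambda_2^{rad}$), which by a standard ODE argument is monotone in $|x|$, so it may be normalised to be nondecreasing; since $\int_B\varphi\,dx=0$ (orthogonality to the constants) the cross term $I''(u_0)[1,\varphi]=(1-g'(u_0))\int_B\varphi\,dx$ vanishes, while $I''(u_0)[\varphi,\varphi]=(\lambda_2^{rad}-g'(u_0))\int_B\varphi^2\,dx<0$ by $(g_3')$; hence $I''(u_0)$ is negative definite on $\spann\{1,\varphi\}$. The two-parameter family $(s,\eta)\mapsto u_0+s+\eta\varphi$ lies in $\mathcal C'$ for $|s|$ and $\eta\ge0$ small (because $\varphi$ is nondecreasing and $u_0>0$) and, away from the point $u_0$, has energy strictly below $I(u_0)$; splicing a finite detour through this family into the constant path produces an admissible path with $\max I<I(u_0)$, whence $c<I(u_0)$. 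Since $u\neq0$ (because $c>0=I(0)$), $u\neq u_0$, and $u_0$ is the only positive constant solution in $\mathcal C'$, the solution $u$ is nonconstant, and a nonconstant radial nondecreasing solution of the nondegenerate equation $-\Delta u+u=g(u)$, $\partial_\nu u=0$, is strictly increasing by a standard maximum principle / ODE uniqueness argument.

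For the multiplicity statement one argues exactly as in Theorem~\ref{thm:main}, working in $n$ pairwise disjoint subcones $\mathcal C'_1,\dots,\mathcal C'_n$ with $u_{0,i}$ the unique positive constant solution of \eqref{Pg} in $\mathcal C'_i$, and running the localized minimax in each to obtain increasing solutions $u_i$ confined to $\mathcal C'_i$, hence pairwise distinct. I expect the main obstacle to be the same one already highlighted for Theorem~\ref{thm:main}: constructing the descending flow that keeps the cone $\mathcal C$ invariant and verifying that the mountain pass minimax operates inside $\mathcal C$ for the truncated, merely $C^1$ nonlinearity. The $p=2$-specific adjustments --- replacing the elementary bound $g'(u_0)>(p-1)u_0^{p-2}$ by the eigenvalue bound $g'(u_0)>\lambda_2^{rad}$ in the energy-decreasing detour, and checking that the relevant eigenfunction is monotone so that the detour remains admissible --- are comparatively soft.
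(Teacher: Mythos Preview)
Your proposal is correct and follows essentially the same approach as the paper: reuse the entire machinery of Sections~\ref{sec2}--\ref{sec:mountain_pass_existece} for $p=2$, and modify only the nonconstancy step (Lemma~\ref{4.9}) by choosing $v$ to be the second radial Neumann eigenfunction, so that the Hessian computation $I''(u_0)[v,v]=(\lambda_2^{rad}-g'(u_0))\int_B v^2<0$ goes through under $(g_3')$; the paper simply cites \cite[Lemma~4.9]{BNW} for this. The one cosmetic difference is that you build the energy-lowering detour via a two-parameter family $u_0+s+\eta\varphi$ and splice it into the constant path, whereas the paper (following \cite{BNW} and Lemma~\ref{lemma:nonconstant_p>2}) keeps the one-parameter Nehari-type curve $t\mapsto t(u_0+sv)$ with the implicit-function scaling $h(s)$; both rest on the same negative-definiteness of $I''(u_0)$ on $\mathrm{span}\{1,\varphi\}$, so nothing is gained or lost either way.
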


For $p=2$ and $g(u)=u^{q-1}$, the result in \cite{BonheureGrossiNorisTerracini2015} provides multiple solutions which oscillate around the constant solution $u\equiv u_0=1$. Similar oscillating solutions can be found via a bifurcation technique, as in \cite{bonheure2016multiple}. There is a branch bifurcating in correspondence to any power $q-1=\lambda_i^{\mathrm{rad}}$ with $i\ge2$, where $\lambda_i^{\mathrm{rad}}$ is the $i$-th eigenvalue of $-\Delta +\mathbb I$ under homogeneous Neumann boundary conditions in the unit ball. We note in passing that this relation between $q$ and $\lambda_i^{\mathrm{rad}}$ seems to be in the same spirit as condition $(g_3')$. It would be interesting to understand whether the bifurcation occurs also when $p>2$. 

Theorem \ref{thm:main} ensures in particular the existence of a nonconstant, nondecreasing, radial solution of \eqref{Pg} in the case $g(u)=u^{q-1}$, for every $q> p$. Denoting by $u_q$ such solution, we detect its asymptotic behavior as $q\to\infty$, in the spirit of \cite{Grossi2006} (see also \cite{GrossiNoris}, \cite{BonheureGrossiNorisTerracini2015}, and \cite{BonheureCasterasNoris2016}).

\begin{theorem}\label{thm:asymptotic_q}
Let $p>2$ and $g(u)=u^{q-1}$, with $q> p$. Denote by $u_q$ the corresponding positive, nonconstant, radially nondecreasing solution found in Theorem~\ref{thm:main}. Then, as $q\to \infty$,
\begin{equation}\label{uqgoestoG}
u_q \to G \textrm{ in } W^{1,p}(B) \cap C^{0,\nu}(\bar B)
\end{equation}
for any $\nu\in(0,1)$, where $G$ is the unique solution of
\begin{equation}\label{eqforG}
\begin{cases}-\Delta_pG+G^{p-1}=0\quad&\mbox{in }B,\\
G=1&\mbox{on }\partial B.
\end{cases}
\end{equation}
\end{theorem}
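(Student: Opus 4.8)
The plan is to combine uniform estimates for the family $\{u_q\}_{q>p}$ with a compactness argument, and then identify the limit by passing to the limit in the weak formulation. First I would establish that the $u_q$ are uniformly bounded, independently of $q$, in $W^{1,p}(B)$ and in $L^\infty(B)$. Since each $u_q\in\mathcal C$, the a priori bounds for solutions in the cone (used already to set up the truncation in the proof of Theorem \ref{thm:main}) should give $\|u_q\|_{L^\infty(B)}\le C$ and $\|u_q\|_{W^{1,p}(B)}\le C$ with $C$ independent of $q$; the key point to check is that the mountain-pass level, hence the energy of $u_q$, stays bounded as $q\to\infty$, which follows from comparing with a fixed competitor path (e.g. the one built to beat the energy of the constant $u_0=1$, which is $q$-independent here). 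Given the $L^\infty$ bound $M$, the right-hand side $u_q^{q-1}$ is bounded by $M^{q-1}$, but more usefully $u_q\le 1$ on the part of the ball where $u_q<1$, so $u_q^{q-1}\to 0$ there; near the boundary $u_q\to 1$. To make this precise I would first show $\sup_B u_q\to 1$: from $(g_3)$-type monotonicity and the equation, the maximum (attained at $r=1$ since $u_q\in\mathcal C$) cannot exceed a constant tending to $1$ as $q\to\infty$, while it is $\ge u_0=1$ because the solution is nonconstant and sits above a suitable constant — this comparison is where I would be most careful.

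Next I would extract a subsequence: by the uniform $W^{1,p}$ bound and reflexivity, $u_{q}\rightharpoonup G$ weakly in $W^{1,p}(B)$ along a subsequence; by compact Sobolev (or Morrey, since we only need $N$-independent Hölder control through the ODE structure of radial functions) embedding, $u_q\to G$ strongly in $C^{0}(\bar B)$ and hence in $L^\infty$. Radial nondecreasing functions in a bounded $W^{1,p}$ ball are actually uniformly Hölder continuous on $\bar B$ by a one-dimensional Morrey estimate in the radial variable, giving precompactness in $C^{0,\nu}(\bar B)$ for every $\nu<1$ directly; this yields the $C^{0,\nu}$ part of \eqref{uqgoestoG} once the limit is identified. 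The strong convergence in $L^\infty$ together with $\sup_B u_q\to 1$ forces $G\le 1$, $G=1$ on $\partial B$, and on any compact subset of $\{G<1\}$ we get $u_q^{q-1}\to 0$ uniformly.

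Then I would pass to the limit in the weak formulation $\int_B |\nabla u_q|^{p-2}\nabla u_q\cdot\nabla\varphi + \int_B u_q^{p-1}\varphi = \int_B u_q^{q-1}\varphi$ for radial test functions $\varphi$. The right-hand side tends to $0$: split $B$ into $\{u_q\le 1-\delta\}$, where $u_q^{q-1}\le(1-\delta)^{q-1}\to0$, and a shrinking neighborhood of $\partial B$ whose measure tends to $0$ (because $u_q\to G$ uniformly and $G<1$ in the interior), on which $u_q^{q-1}\le M^{q-1}$ but... this crude bound fails, so instead I would use that $u_q^{q-1}$ is monotone in $r$ and integrate: $\int_B u_q^{q-1}\varphi \le \|\varphi\|_\infty\,|B|\,\sup_B u_q^{q-1}$ is not enough either; the correct route is to note $\int_B u_q^{q-1} = \int_B (-\Delta_p u_q + u_q^{p-1}) = \int_B u_q^{p-1}$ by the equation (testing with $\varphi\equiv1$), which is uniformly bounded, and then use $u_q^{q-1}\to 0$ a.e. on $\{G<1\}$ with equi-integrability from this uniform $L^1$ bound plus Vitali — more cleanly, dominated convergence after observing $u_q^{q-1}\le u_q^{p-1}\cdot u_q^{q-p}\le M^{q-p}$, still not dominated. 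I expect this equi-integrability/vanishing of $\int_B u_q^{q-1}\varphi$ to be the main technical obstacle, and I would resolve it via the identity $\int_B u_q^{q-1}=\int_B u_q^{p-1}$ combined with pointwise decay to conclude $\int_B u_q^{q-1}\varphi\to 0$ by splitting and using uniform integrability of $\{u_q^{q-1}\}$ deduced from monotonicity in $r$ (the mass concentrates in an arbitrarily thin boundary layer, whose contribution to the integral against a bounded $\varphi$ is controlled by the total mass times... ) — concretely, for $\eta>0$ pick $\rho<1$ with $|B\setminus B_\rho|$ small; on $B_\rho$ use uniform decay, on $B\setminus B_\rho$ use that $\int_{B\setminus B_\rho}u_q^{q-1}\le\int_B u_q^{q-1}=\int_B u_q^{p-1}\le C$ and refine $\rho$ using monotonicity so that the tail mass itself is $<\eta$. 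With the right-hand side gone, the limit satisfies $\int_B |\nabla G|^{p-2}\nabla G\cdot\nabla\varphi + \int_B G^{p-1}\varphi = 0$ for all radial $\varphi\in W^{1,p}(B)$, i.e. $-\Delta_p G + G^{p-1}=0$ in $B$, with the boundary condition $G=1$ on $\partial B$ read off from uniform convergence and $\sup u_q\to1$; uniqueness of \eqref{eqforG} follows from strict monotonicity of $t\mapsto |t|^{p-2}t$ (standard for the $p$-Laplacian with this zero-order term). Finally, to upgrade weak to strong convergence in $W^{1,p}$, I would test the equation for $u_q$ with $u_q-G$, test \eqref{eqforG} with $u_q-G$, subtract, use the vanishing right-hand side and the already-known strong $L^p$ convergence of $u_q$ to $G$, and invoke the standard $(S_+)$/monotonicity inequality $\langle |\nabla u_q|^{p-2}\nabla u_q - |\nabla G|^{p-2}\nabla G,\nabla u_q-\nabla G\rangle\ge c|\nabla u_q-\nabla G|^p$ (valid for $p\ge2$) to conclude $\nabla u_q\to\nabla G$ in $L^p(B)$; since the limit $G$ is independent of the subsequence, the full family converges.
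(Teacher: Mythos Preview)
Your plan to pass directly to the limit in the weak formulation runs into a real obstruction at exactly the point you flag as the main technical obstacle, and your proposed resolution does not work. The sequence $u_q^{q-1}$ does \emph{not} tend to zero in $L^1(B)$: testing the equation with $\varphi\equiv 1$ gives $\int_B u_q^{q-1}=\int_B u_q^{p-1}\to\int_B G^{p-1}>0$, so the mass of $u_q^{q-1}$ concentrates on $\partial B$ rather than vanishing. Consequently, for test functions that do not vanish near $\partial B$ (in particular for $\varphi=u_q-G$, which you need for the $(S_+)$ upgrade), the right--hand side $\int_B u_q^{q-1}\varphi$ does not tend to $0$; this is precisely why the Neumann condition is lost in the limit ($\partial_\nu G>0$ by Hopf). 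Your ``refine $\rho$ so that the tail mass is $<\eta$'' is therefore false: all the mass lives in the tail.

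If you restrict to compactly supported $\varphi$, you could hope to recover the interior equation, but the argument you give is circular: you assume ``$G<1$ in the interior'' (equivalently $\{u_q\le 1-\delta\}$ exhausts $B$) to get $u_q^{q-1}\to 0$ on $\mathrm{supp}\,\varphi$, yet from your information so far nothing rules out $u_\infty\equiv 1$. Your justification of $\sup_B u_q\to 1$ is also only a sketch; the paper obtains it from a uniform $C^1$ bound (and this $C^1$ bound, coming from a pointwise Lyapunov identity $L_q(r)=\tfrac{p-1}{p}(u_q')^p-\tfrac{u_q^p}{p}+\tfrac{u_q^q}{q}$ which is nonincreasing in $r$, is also what gives $C^{0,\nu}$ convergence for \emph{all} $\nu\in(0,1)$, not just $\nu\le 1-1/p$).

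The paper avoids the whole issue of the right--hand side near the boundary by a variational identification of the limit. It first proves a Nehari characterization $c_q=\inf_{\mathcal N_q}I_q$ of the mountain pass level. Using the unique projection of $G$ onto $\mathcal N_q$ one gets $\limsup_q c_q\le c_\infty:=\inf\{\|v\|^p/p:\ v\in\mathcal C,\ v=1\text{ on }\partial B\}$, while weak lower semicontinuity and the $C^1$ estimates give $c_\infty\le\liminf_q c_q$. Hence $c_q\to c_\infty$, which forces $\|u_q\|\to\|G\|$; together with weak convergence and uniqueness of the minimizer for $c_\infty$, this yields $u_q\to G$ strongly in $W^{1,p}$ without ever passing to the limit in $\int_B u_q^{q-1}\varphi$ for test functions touching $\partial B$.
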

In the proof of this theorem, we use the fact that the solutions $u_q$ are nondecreasing and that the nonlinearity $g$ is a pure power to get an estimate on the $C^1$-norm of $u_q$, which is uniform in $q$. This ensures the existence of a limit profile function $G$ which is nonnegative and radially nondecreasing. 
We note that it is delicate to prove that $G$ solves the equation in \eqref{eqforG} near the boundary $\partial B$. Heuristically, this comes from the fact that $u_q(1)>1$ for all $q$, and so $\lim_{q\to\infty}u_q(1)^{q-1}$ may be an indeterminate form. In order to prove that $G$ solves actually \eqref{eqforG} in the whole ball $B$, we show that the mountain pass levels $c_q$'s tend to a value $c_\infty$ which is a critical level for the energy associated to \eqref{eqforG}. This latter result requires in turn the preliminary proof of the fact that any mountain pass level $c_q$ coincides with the minimum of the energy functional on a Nehari-type set already introduced in \cite{SerraTilli2011} (that is to say, the Nehari manifold intersected with the cone $\mathcal C$).
We remark here that the Neumann boundary condition is not preserved in the limit, being $\partial_\nu G>0$ on $\partial B$, by Hopf's Lemma (see for instance \cite[Theorem~3.3]{damascelli1999symmetry}). Hence, the convergence $C^{0,\gamma}(\bar B)$ in \eqref{uqgoestoG} is optimal. 

The paper is organized as follows. In Section \ref{sec2}, we prove a priori estimates for nonnegative, radially nondecreasing solutions of \eqref{Pg}. In Section \ref{sec:mountain_pass_existece}, we show the existence of a nonnegative, radially nondecreasing solution of \eqref{Pg} via a mountain pass type argument. Furthermore, in Section \ref{sec4} we conclude the proof of Theorem~\ref{thm:main}, by proving the nonconstancy of the solution found in Section \ref{sec:mountain_pass_existece} and the multiplicity result. A sketch of the proof of Theorem \ref{thm:p=2} is also given in the same section. The asymptotic behavior as $q\to\infty$ of the mountain pass solution of \eqref{Pg} in the pure power case is then studied in Section \ref{sec5}. Finally in Appendix \ref{A}, we collect some partial results valid in the case $1<p<2$.     

\section{A priori bounds for nondecreasing radial solutions}\label{sec2}

\begin{lemma}\label{gtof} For every $g\in C^1([0,\infty))$ satisfying $(g_1)$-$(g_2)$ there exist $f\in C^1([0,\infty))$ nonnegative and nondecreasing, and a constant $m\ge 1$ for which the following properties hold
\begin{itemize}
\item[$(f_1)$] $\lim_{s\to 0^+}\frac{f(s)}{s^{p-1}}\in[m-1,m)$;
\item[$(f_2)$] $\liminf_{s\to\infty}\frac{f(s)}{s^{p-1}}>m$.
\end{itemize}
Furthermore, if $g$ verifies also $(g_3)$, $f$ verifies
\begin{itemize}
\item[$(f_3)$] $\exists$ a constant $u_0>0$ such that $f(u_0)=mu_0^{p-1}$ and $f'(u_0)>m(p-1)u_0^{p-2}$.
\end{itemize}
\end{lemma}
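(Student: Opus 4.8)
The plan is to construct $f$ by adding a suitable multiple of $s^{p-1}$ to $g$, and then to correct for the fact that $g$ itself may fail to be monotone or nonnegative. Concretely, set $\ell_0 := \lim_{s\to 0^+} g(s)/s^{p-1}\in[0,1)$ (which exists by $(g_1)$) and pick $\ell_\infty\in(1,\infty)$ with $\ell_\infty < \liminf_{s\to\infty} g(s)/s^{p-1}$ (possible by $(g_2)$). First I would choose a constant $k\ge 0$ large enough that the function $h(s) := g(s) + k s^{p-1}$ is nonnegative on $[0,\infty)$ and has nonnegative derivative, i.e. $h'(s) = g'(s) + k(p-1)s^{p-2}\ge 0$ for all $s>0$; the existence of such a $k$ is exactly where the hypotheses must be used carefully, since $g'$ need only be continuous, not bounded. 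Then I would set $m := k+1$ (so $m\ge 1$), define $f := h = g + (m-1)s^{p-1}$, and verify directly:
\begin{itemize}
\item[$(f_1)$] $\displaystyle\lim_{s\to0^+}\frac{f(s)}{s^{p-1}} = \ell_0 + (m-1) \in [m-1, m)$ since $\ell_0\in[0,1)$;
\item[$(f_2)$] $\displaystyle\liminf_{s\to\infty}\frac{f(s)}{s^{p-1}} = (m-1) + \liminf_{s\to\infty}\frac{g(s)}{s^{p-1}} > (m-1) + 1 = m$;
\item[$(f_3)$] if $g(u_0) = u_0^{p-1}$ and $g'(u_0) > (p-1)u_0^{p-2}$, then $f(u_0) = u_0^{p-1} + (m-1)u_0^{p-1} = m u_0^{p-1}$ and $f'(u_0) = g'(u_0) + (m-1)(p-1)u_0^{p-2} > (p-1)u_0^{p-2} + (m-1)(p-1)u_0^{p-2} = m(p-1)u_0^{p-2}$.
\end{itemize}

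The only genuine obstacle is the claim that a single constant $k$ makes $g(s) + ks^{p-1}$ both nonnegative and nondecreasing on the \emph{whole} half-line $[0,\infty)$: near $s=0$ the term $s^{p-1}$ (and its derivative $(p-1)s^{p-2}$, which blows up since $p>2$ would make $p-2$ possibly small but $s^{p-2}\to\infty$ only if $p<2$; for $p>2$ we have $s^{p-2}\to 0$) behaves differently than near $\infty$. I would handle this by a two-scale argument: on a bounded interval $[0,R]$, continuity of $g$ and $g'$ gives bounds $|g(s)|\le M_0$ and $|g'(s)|\le M_1$, while on $[R,\infty)$, after enlarging $R$, assumption $(g_2)$ forces $g(s)\ge s^{p-1}$ directly and one can also control the sign of the derivative using that $g(s)/s^{p-1}$ is eventually bounded below by something $>1$ together with the $C^1$ structure — or, more robustly, one simply does not insist on monotonicity of $g$ itself but instead replaces $g$ on $[0,R]$ by a monotone nonnegative $C^1$ function lying below it where $g < s^{p-1}$ and matching it smoothly, which is harmless because $(f_1)$–$(f_3)$ only constrain $f$ near $0$, near $\infty$, and at the single point $u_0$. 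I would organize the final write-up around the second, cleaner route: explicitly build $f$ piecewise (a mollified truncation near $0$, the shifted $g$ near $u_0$ and beyond) and check the three asymptotic/pointwise conditions, since this avoids any delicate global estimate on $g'$.

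Let me also note, for the record, that the monotonicity and nonnegativity of $f$ are needed later (for the truncation argument and the comparison principles in Sections \ref{sec2}–\ref{sec:mountain_pass_existece}), but for the statement of the lemma itself the content is entirely the elementary verification above together with the existence of the shift constant; I expect the proof to be short once the piecewise construction near $s=0$ is fixed.
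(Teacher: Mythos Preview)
Your core construction---set $f(s)=g(s)+Cs^{p-1}$ with $m=C+1$, then verify $(f_1)$--$(f_3)$ by direct substitution---is exactly the paper's proof. The paper handles the step you flag as delicate in a single sentence: it asserts that $(g_1)$, $(g_2)$ and $g\in C^1([0,\infty))$ yield a constant $C\ge 0$ with $g'(s)\ge -C(p-1)s^{p-2}$ for all $s\ge 0$, defines $f:=g+Cs^{p-1}$, notes $f(0)=0$ and $f'\ge 0$ (hence $f\ge 0$), and then checks $(f_1)$--$(f_3)$ just as you do. No two-scale or piecewise argument appears.

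Your proposed ``second, cleaner route''---replacing $g$ on $[0,R]$ by a different monotone $C^1$ function---would be a genuine error for the intended application, and you should drop it. The whole purpose of the lemma (made explicit immediately after its proof, where the paper passes to the ``equivalent problem'' \eqref{P}) is that the $f$-problem $-\Delta_p u + m u^{p-1}=f(u)$ is \emph{identical} to the $g$-problem $-\Delta_p u + u^{p-1}=g(u)$, precisely because $f(s)-ms^{p-1}=g(s)-s^{p-1}$ pointwise. Any piecewise modification that breaks the identity $f=g+(m-1)s^{p-1}$ destroys this equivalence, so a solution of the modified problem would no longer solve \eqref{Pg}. The three conditions $(f_1)$--$(f_3)$ are not the only constraints on $f$; the global relation to $g$ is essential. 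Stay with the pure shift.
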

\begin{proof} Since $g\in C^1([0,\infty))$ satisfies $(g_1)$ and $(g_2)$, there exists $C\geq 0$ such that  $$g'(s)\ge -C (p-1) s^{p-2}\quad\mbox{for all }s\in[0,\infty).$$ Hence, if we define $f:[0,\infty)\to\mathbb R$ by $$f(s):= g(s)+Cs^{p-1},$$ $f\in C^{1}([0,\infty))$, $f(0)=0$, $f'\ge0$, and so $f\ge0$. Furthermore, by $(g_1)$, $f$ satisfies 
$$\lim_{s\to 0^+}\frac{f(s)}{s^{p-1}}\in[C,1+C).$$ 
Properties $(f_1)$-$(f_3)$ then follow immediately by $(g_1)$-$(g_3)$, with $m:=1+C$. 
\end{proof}

As a consequence of the previous lemma, from now on in the paper we consider the equivalent problem 
\begin{equation}\label{P}
\begin{cases}
-\Delta_p u+m u^{p-1}=f(u)\quad&\mbox{in }B,\\
u>0\quad&\mbox{in }B,\\
\partial_\nu u=0\quad&\mbox{on }\partial B,
\end{cases}
\end{equation}
where $f\in C^1([0,\infty))$ is nonnegative, nondecreasing, and satisfies $(f_1)$-$(f_3)$. We endow the space $W^{1,p}(B)$ with the equivalent norm $\|\cdot\|:W^{1,p}(B)\to\mathbb R^+$ defined by
$$\|u\|:= \left(\|\nabla u\|_{L^p(B)}^p+m\|u\|_{L^p(B)}^p\right)^{1/p}.$$ 
\smallskip

We look for solutions to \eqref{P} in $W^{1,p}_{\mathrm{rad}}(B)$, that is to say the space of radial functions in $W^{1,p}(B)$. Since $p>1$, we can assume that $W^{1,p}_{\mathrm{rad}}(B)$-functions are continuous in $(0,1]$ and define the {\it cone of nonnegative radially nondecreasing functions} as in \eqref{cone}.
We note that, if $u\in\mathcal C$, we can set $u(0):=\lim_{r\to0^+}u(r)$ by monotonicity, and consider $u\in C(\bar{B})$. Moreover, being nondecreasing, every $u\in\mathcal C$ is differentiable a.e. and $u'(r)\ge0$ where it is defined. It is easy to prove that $\mathcal C$ is a closed convex cone in $W^{1,p}(B)$, that is to say, the following properties hold for all $u,\,v\in\mathcal C$ and $\lambda\ge0$
\begin{itemize}
\item[(i)] $\lambda u\in \mathcal C$;
\item[(ii)] $u+v\in \mathcal C$;
\item[(iii)] if also $-u\in\mathcal C$, then $u\equiv0$;
\item[(iv)] $\mathcal C$ is closed for the topology of $W^{1,p}$.
\end{itemize}

The cone $\mathcal{C}$ was first introduced in \cite{SerraTilli2011} in the case $p=2$. It is a useful set when working with Sobolev-supercritical problems because of the following a priori estimates.

\begin{lemma}\label{bounded} For every $1\leq q< \infty$ there exists $C(N,q)$ such that 
$$\|u\|_{L^\infty(B)}\le C(N,q)\|u\|_{W^{1,q}(B)}\quad\mbox{for all }u\in\mathcal{C}.$$
\end{lemma}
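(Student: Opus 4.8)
The plan is to exploit the two structural features of functions in $\mathcal C$ — radiality and monotonicity — to reduce everything to a one-dimensional computation on an annulus. The first observation is that, since every $u\in\mathcal C$ is radial and nondecreasing in $r=|x|$, its essential supremum over $B$ is simply its boundary value: $\|u\|_{L^\infty(B)}=u(1)$ (recalling that, being monotone, $u$ has a well-defined continuous representative on $(0,1]$). So the whole statement amounts to estimating the single number $u(1)$ by $\|u\|_{W^{1,q}(B)}$.

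To do this I would write, for a.e. $r\in(1/2,1)$, the fundamental theorem of calculus
\[
u(1)=u(r)+\int_r^1 u'(t)\,dt,
\]
which is legitimate because the restriction of $u$ to the annulus $A:=\{x:\tfrac12<|x|<1\}$, viewed as a function of $r$, lies in $W^{1,q}((1/2,1))\hookrightarrow C([1/2,1])$ (and in any case $u\in W^{1,p}_{\mathrm{rad}}(B)$ is already absolutely continuous on compact subsets of $(0,1]$). Averaging this identity over $r\in(1/2,1)$ gives
\[
u(1)=2\int_{1/2}^1 u(r)\,dr+2\int_{1/2}^1\!\!\int_r^1 u'(t)\,dt\,dr\le 2\int_{1/2}^1 u(r)\,dr+\int_{1/2}^1 |u'(t)|\,dt,
\]
after bounding the inner integral crudely and using $u'\ge0$. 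Now I would pass from these plain one-dimensional integrals on $(1/2,1)$ to integrals over the annulus $A$: since for radial $u$ one has $|\nabla u(x)|=|u'(|x|)|$ and the weight $r^{N-1}$ is bounded below by $2^{-(N-1)}$ on $(1/2,1)$, it follows that $\int_{1/2}^1|u'(r)|^q\,dr\le C(N)\|\nabla u\|_{L^q(B)}^q$ and, likewise, $\int_{1/2}^1 u(r)^q\,dr\le C(N)\|u\|_{L^q(B)}^q$. An application of Hölder's inequality on the interval $(1/2,1)$ then turns the $L^1$-in-$r$ norms above into $L^q$-in-$r$ norms, and combining everything yields
\[
u(1)\le C(N,q)\big(\|u\|_{L^q(B)}+\|\nabla u\|_{L^q(B)}\big)\le C(N,q)\,\|u\|_{W^{1,q}(B)},
\]
which is the claim (if $\|u\|_{W^{1,q}(B)}=\infty$ there is nothing to prove).

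The argument is short, and there is no serious obstacle; the only point that requires care is the degeneracy of the radial weight $r^{N-1}$ at the origin, which is exactly why one localizes on the annulus $\{1/2\le|x|\le1\}$, where the weight is comparable to a constant — and this localization is harmless precisely because monotonicity has already placed the supremum at $r=1$, inside that annulus. A secondary minor point is that only one derivative is available, which is handled by the elementary integral representation above rather than by any embedding theorem; the case $N=1$ is covered by the same computation with trivial modifications.
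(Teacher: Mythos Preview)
Your proof is correct and follows essentially the same approach as the paper: both exploit monotonicity to localize the supremum to the annulus $\{1/2\le |x|\le 1\}$, use radiality (together with the fact that the weight $r^{N-1}$ is bounded below there) to reduce to a one-dimensional $W^{1,1}$-to-$L^\infty$ estimate, and then invoke H\"older to pass from $q=1$ to general $q$. The only cosmetic difference is that the paper states the $W^{1,1}$ embedding on the annulus directly and routes through $\|u\|_{W^{1,1}(B)}$, whereas you spell out the underlying FTC computation explicitly.
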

\begin{proof} Since $u\in\mathcal C$ is nonnegative and nondecreasing, we get
\begin{equation}\label{eq:infinity_norm_increasing}
\|u\|_{L^\infty(B)}=\|u\|_{L^\infty(B\setminus B_{1/2})},
\end{equation}
and by the radial symmetry of $u\in\mathcal C$, 
\begin{equation}\label{eq:a_priori_radial}
\|u\|_{L^\infty(B\setminus B_{1/2})}\le C\|u\|_{W^{1,1}(B\setminus B_{1/2})}\le C\|u\|_{W^{1,1}(B)}
\end{equation}
for some $C>0$ depending only on the dimension $N$. Moreover, being $B$ bounded, for every $q\in[1,\infty)$ there exists a constant $C>0$ depending only on $N$ and $q$, such that 
\begin{equation}\label{eq:a_priori_B}
\|u\|_{W^{1,1}(B)}\le C\|u\|_{W^{1,q}(B)}\quad\mbox{for all }u\in W^{1,1}(B).
\end{equation} 
By combining \eqref{eq:infinity_norm_increasing}, \eqref{eq:a_priori_radial} and \eqref{eq:a_priori_B}, for every $q\in[1,\infty)$ we can find a constant $C(N,q)>0$ for which the statement holds.
\end{proof}

\begin{lemma} For all $q\in[1,\infty)$, the cone $\mathcal C$ endowed with the $W^{1,p}$-norm is compactly embedded in $L^q(B)$.
\end{lemma}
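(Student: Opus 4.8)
The plan is to reduce the statement to the dominated convergence theorem, using Lemma~\ref{bounded} to supply a uniform $L^\infty$ bound. Let $(u_n)\subset\mathcal C$ be bounded in $W^{1,p}(B)$; I want to extract a subsequence that converges in $L^q(B)$ \emph{for every} $q\in[1,\infty)$ simultaneously.

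First I would apply the classical Rellich--Kondrachov theorem on the smooth bounded domain $B$: since $p<p^*$ always holds (interpreting $p^*=\infty$ when $p\ge N$), the embedding $W^{1,p}(B)\hookrightarrow L^p(B)$ is compact, so along a subsequence $u_n\to u$ strongly in $L^p(B)$, and along a further subsequence $u_n\to u$ a.e.\ in $B$. Because $\mathcal C$ is convex and closed in $W^{1,p}(B)$ by property (iv), it is weakly closed; since $p>1$, the bounded sequence $(u_n)$ has a weakly convergent subsequence, and its weak limit necessarily coincides with $u$, so $u\in\mathcal C$ (although for the notion of compact embedding one only needs the existence of the $L^q$-limit).

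Next I would invoke Lemma~\ref{bounded} with the exponent $q=p$: it gives $\|u_n\|_{L^\infty(B)}\le C(N,p)\|u_n\|_{W^{1,p}(B)}\le M$ with $M$ independent of $n$, and passing to the a.e.\ limit also $\|u\|_{L^\infty(B)}\le M$. Then for any fixed $q\in[1,\infty)$ we have $|u_n-u|^q\le (2M)^q\in L^1(B)$ (here $|B|<\infty$ is used) and $|u_n-u|^q\to0$ a.e.\ in $B$, so the dominated convergence theorem yields $u_n\to u$ in $L^q(B)$. As $q$ was arbitrary, the same subsequence works for all finite $q$, which is precisely the asserted compact embedding.

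I do not expect a genuine obstacle here; the only point worth emphasizing is that the cone structure, through the $L^\infty$ estimate of Lemma~\ref{bounded}, is exactly what upgrades the $L^p$ (equivalently, a.e.) convergence furnished by standard Rellich--Kondrachov to convergence in \emph{all} $L^q$, $q<\infty$ --- a conclusion that would be false for arbitrary bounded sequences in $W^{1,p}(B)$ without the monotonicity and radiality constraints defining $\mathcal C$.
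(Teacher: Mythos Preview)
Your proof is correct and follows essentially the same approach as the paper: both use Rellich--Kondrachov to obtain convergence in a low Lebesgue space together with the uniform $L^\infty$ bound from Lemma~\ref{bounded} to upgrade to $L^q$ for all $q<\infty$. The only cosmetic difference is that the paper uses the interpolation inequality $\|u_n-u\|_{L^q}^q\le\|u_n-u\|_{L^\infty}^{q-1}\|u_n-u\|_{L^1}$ in place of your dominated convergence argument.
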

\begin{proof} If $N<p$ the conclusion follows at once by the Rellich-Kondrachov theorem. In the complementary case, we take into account the fact that $\mathcal C$-functions are bounded. More precisely, if we have $(u_n)\subset\mathcal C$ bounded in the $W^{1,p}$-norm, there exists $u\in\mathcal C$ such that up to a subsequence $u_n\rightharpoonup u$ in $W^{1,p}(B)$ and so $u_n\to u$ in $L^1(B)$. Therefore, by Lemma~\ref{bounded} we get that for every $q<\infty$, 
$$\begin{aligned}\int_B|u_n-u|^qdx &\le\|u_n-u\|_{L^\infty(B)}^{q-1}\|u_n-u\|_{L^1(B)}\\
&\le C(N,p)^{q-1}\|u_n-u\|_{W^{1,p}(B)}^{q-1}\|u_n-u\|_{L^1(B)}
\to0,\end{aligned}$$  
that is $u_n\to u$ in $L^q(B)$.
\end{proof}

Fix $\delta,\, M>0$ such that 
 \begin{equation}\label{Mdelta}
 f(s)\ge(m+\delta)s^{p-1}\quad\mbox{for all }s\ge M.
 \end{equation}
The existence of $\delta,\, M>0$ follows by $(f_2)$ in Lemma \ref{gtof}.
We introduce the following set of functions  
\begin{equation}\label{FmdeltaM}
\mathfrak{F}:=\left\{\varphi\in C([0,\infty))\,:\,\varphi\mbox{ nonnegative, } \, \varphi(s)\ge(m+\delta)s^{p-1}\,\mbox{for all }s\ge M\right\}
\end{equation}
We remark that $\mathfrak F$ depends on $f$ only through $\delta$ and $M$. In the remaining of this section, we shall derive some a priori estimates which are uniform in $\mathfrak F$ and hence depend only on $\delta$ and $M$ and not on the specific nonlinearity $f$ belonging to $\mathfrak F$.

\begin{lemma} There exists a constant $K_{p-1}>0$ such that  
$$\|u\|_{L^{p-1}(B)}\le K_{p-1}$$
for every solution $u$ of 
\begin{equation}\label{Pphi}
\begin{cases}
-\Delta_p u+m u^{p-1}=\varphi(u)\quad&\mbox{in }B,\\
u>0&\mbox{in }B,\\
\partial_\nu u=0&\mbox{on }\partial B,
\end{cases}
\end{equation}
and for every $\varphi\in\mathfrak F$.
\end{lemma}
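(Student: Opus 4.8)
The plan is to test the equation in \eqref{Pphi} against a cleverly chosen function and exploit the sign condition $\varphi(s)\ge(m+\delta)s^{p-1}$ for $s\ge M$ together with the monotonicity encoded in the cone $\mathcal C$. First I would recall that any solution $u$ of \eqref{Pphi} lying in $\mathcal C$ is radially nondecreasing, so the region $\{u\ge M\}$ is an annulus $B\setminus B_{r_0}$ for some $r_0=r_0(u)\in[0,1]$. The natural test function is $u$ itself: integrating $-\Delta_p u + mu^{p-1}=\varphi(u)$ against $u$ gives
\[
\int_B |\nabla u|^p\,dx + m\int_B u^p\,dx = \int_B \varphi(u)\,u\,dx,
\]
using $\partial_\nu u=0$. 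The idea is that the left-hand side is $\ge m\|u\|_{L^p}^p$, while on the right we split $B=B_{r_0}\cup (B\setminus B_{r_0})$: on $B_{r_0}$ we have $u\le M$ so $\varphi(u)u$ is bounded by a constant depending only on $M$ and on $\sup_{[0,M]}\varphi$ — but wait, $\varphi\in\mathfrak F$ is only assumed nonnegative and continuous with no uniform bound on $[0,M]$, so this crude splitting does not directly give a bound uniform in $\mathfrak F$.

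To get around this, I would instead test against a function that vanishes where $u$ is small. A clean choice is $\psi:=(u-M)^+$, which also lies (up to the constant shift) in a manageable class: since $u\in\mathcal C$ is nondecreasing, $\psi$ is supported on the annulus $\{u\ge M\}$ and $\nabla\psi=\nabla u$ there. Testing the equation against $\psi$,
\[
\int_{\{u\ge M\}} |\nabla u|^p\,dx + m\int_B u^{p-1}(u-M)^+\,dx = \int_B \varphi(u)(u-M)^+\,dx.
\]
On the set $\{u\ge M\}$ we have $\varphi(u)\ge (m+\delta)u^{p-1}$, so the right-hand side is $\ge (m+\delta)\int_{\{u\ge M\}} u^{p-1}(u-M)\,dx$, while the second term on the left is $m\int_{\{u\ge M\}}u^{p-1}(u-M)\,dx$. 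Dropping the gradient term (it is nonnegative) and rearranging yields
\[
\delta\int_{\{u\ge M\}} u^{p-1}(u-M)\,dx \le 0,
\]
which forces $\int_{\{u\ge M\}} u^{p-1}(u-M)\,dx = 0$, hence $u\le M$ a.e. on $B$. That already gives $\|u\|_{L^\infty(B)}\le M$, and therefore $\|u\|_{L^{p-1}(B)}\le M |B|^{1/(p-1)}=:K_{p-1}$, a constant depending only on $N,p,M$ (in particular uniform in $\mathfrak F$), which is exactly the claimed estimate.

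The one technical point to verify carefully is that $(u-M)^+$ is an admissible test function for the weak formulation of \eqref{Pphi}: this is standard since $u\in W^{1,p}(B)\cap L^\infty$ (boundedness coming from Lemma~\ref{bounded} once we know $u\in\mathcal C$), so $(u-M)^+\in W^{1,p}(B)$ with the stated gradient. I also need $\varphi(u)\in L^{p'}(B)$ for the pairing to make sense, which follows from $u\in L^\infty$ and $\varphi$ continuous, so $\varphi(u)$ is bounded on the (a priori bounded) range of $u$ — here one uses the a priori $L^\infty$-bound from Lemma~\ref{bounded} applied to the individual solution $u$, even though that bound is not yet uniform; uniformity of $K_{p-1}$ is recovered at the end from $u\le M$. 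The main (very mild) obstacle is thus purely bookkeeping: making the test-function argument rigorous for weak solutions and confirming that no constant depending on $\varphi|_{[0,M]}$ sneaks into the final bound — which it does not, precisely because the chosen test function annihilates the low-$u$ region.
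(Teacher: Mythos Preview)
Your argument contains a sign error that invalidates the conclusion. After testing with $\psi=(u-M)^+$ you obtain
\[
\int_{\{u\ge M\}}|\nabla u|^p\,dx + m\int_{\{u\ge M\}} u^{p-1}(u-M)\,dx \;=\; \int_{\{u\ge M\}}\varphi(u)(u-M)\,dx,
\]
and the hypothesis $\varphi(s)\ge(m+\delta)s^{p-1}$ for $s\ge M$ gives a \emph{lower} bound on the right-hand side, not an upper bound. Thus the identity yields
\[
\int_{\{u\ge M\}}|\nabla u|^p\,dx \;\ge\; \delta\int_{\{u\ge M\}} u^{p-1}(u-M)\,dx,
\]
which is true but useless here; dropping the nonnegative gradient term on the left only gives $m\int u^{p-1}(u-M)^+\le \text{RHS}$, and combining this with a lower bound on the same RHS yields nothing. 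In particular your conclusion $u\le M$ a.e.\ is too strong and generally false: the paper later constructs solutions in $\mathcal C$ with $\|u\|_{L^\infty}>M$ (indeed $s_0>\max\{K_\infty,M\}$ is needed in the truncation), and the lemma only claims an $L^{p-1}$ bound.

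The paper's proof is in fact simpler and avoids this trap: it tests against the constant function $1$ (i.e.\ just integrates the PDE), so the Neumann condition kills the gradient term exactly, giving
\[
m\int_B u^{p-1}\,dx=\int_B\varphi(u)\,dx\ge(m+\delta)\int_{\{u\ge M\}}u^{p-1}\,dx.
\]
Rearranging, $m\int_{\{u<M\}}u^{p-1}\,dx\ge\delta\int_{\{u\ge M\}}u^{p-1}\,dx$, and the left side is at most $mM^{p-1}|B|$; this bounds $\int_B u^{p-1}$ by a constant depending only on $m,\delta,M,N$. Note also that this argument needs no assumption $u\in\mathcal C$, whereas your proposal invokes radial monotonicity and Lemma~\ref{bounded}, which are not part of the hypotheses of the lemma as stated.
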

\begin{proof} By integrating the equation in \eqref{Pphi} and using the fact that $\varphi\in\mathfrak F$, we have
$$m\int_B u^{p-1}dx=\int_{\{u< M\}} \varphi(u)dx+\int_{\{u\ge M\}}\varphi(u)dx\ge(m+\delta)\int_{\{u\ge M\}}u^{p-1}dx.$$
Thus,
$$mM^{p-1}|B|>m\int_{\{u< M\}}u^{p-1}dx\ge\delta\int_{\{u\ge M\}}u^{p-1}dx,$$
where $|B|$ is the volume of the unitary ball of $\mathbb R^N$, and so 
\begin{equation}\label{p-1}\int_B u^{p-1}dx=\int_{\{u< M\}}u^{p-1}dx+\int_{\{u\ge M\}}u^{p-1}dx<\left(1+\frac{m}\delta\right)M^{p-1}|B|=:K_{p-1}^{p-1},
\end{equation}
which yields the estimate. 
\end{proof}

\begin{lemma}\label{apriori} There exists a constant $K_{\infty}>0$ such that  
$$\|u\|_{L^\infty(B)}\le K_\infty \quad\mbox{and}\quad \|u\|\le \left(K_\infty |B|\max_{s\in[0,K_\infty]}\varphi(s)\right)^{1/p}$$
for every solution $u\in\mathcal C$ of \eqref{Pphi} and every $\varphi\in\mathfrak F$.
\end{lemma}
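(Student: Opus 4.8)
The plan is to bootstrap from the $L^{p-1}$-bound of the previous lemma to an $L^\infty$-bound using the a priori estimates available in the cone $\mathcal C$, and then to control $\|u\|$ directly from the equation. First I would note that by Lemma~\ref{bounded} (applied with $q=1$, say), every solution $u\in\mathcal C$ of \eqref{Pphi} satisfies $\|u\|_{L^\infty(B)}\le C(N,1)\|u\|_{W^{1,1}(B)}$. Since $u\in\mathcal C$ is nondecreasing and radial, the $W^{1,1}$-norm can in turn be estimated: indeed $\|u\|_{L^1(B)}$ is controlled by $\|u\|_{L^{p-1}(B)}\le K_{p-1}$ (using $p-1>1$ and H\"older on the bounded domain $B$), while the key is to bound $\|\nabla u\|_{L^1(B)}=\int_B |\nabla u|\,dx$. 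For a radial nondecreasing function this is $\omega_{N-1}\int_0^1 u'(r)r^{N-1}\,dr\le \omega_{N-1}\int_0^1 u'(r)\,dr=\omega_{N-1}(u(1)-u(0))\le \omega_{N-1}\|u\|_{L^\infty(B)}$, which is circular; so instead I would test the equation.

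The cleaner route is: multiply the equation in \eqref{Pphi} by $u$ and integrate by parts (the boundary term vanishes by $\partial_\nu u=0$), obtaining
\begin{equation}\label{eq:testu}
\|\nabla u\|_{L^p(B)}^p+m\|u\|_{L^p(B)}^p=\int_B \varphi(u)\,u\,dx.
\end{equation}
To close this I need an a priori $L^\infty$-bound first. So I would instead proceed in the order: (1) derive $\|u\|_{L^\infty}\le K_\infty$, (2) then use \eqref{eq:testu} to get the norm bound. For step (1), I would exploit that $u\in\mathcal C$ together with the $L^{p-1}$-bound pins down $\|u\|_{L^\infty}$ directly, because for a nonnegative nondecreasing radial function one has, for any $r\in(0,1)$,
\[
u(r)^{p-1}|B\setminus B_r|\le \int_{B\setminus B_r} u^{p-1}\,dx\le \int_B u^{p-1}\,dx< K_{p-1}^{p-1},
\]
hence $u(r)^{p-1}\le K_{p-1}^{p-1}/|B\setminus B_r|$; choosing e.g. $r=1/2$ and letting the monotonicity give $\|u\|_{L^\infty(B)}=\lim_{r\to1^-}u(r)$, one in fact gets $\|u\|_{L^\infty(B)}^{p-1}\le K_{p-1}^{p-1}/|B\setminus B_{1/2}| =: K_\infty^{p-1}$, which defines $K_\infty$ in terms of $\delta,M,N,p$ only, as required. (One must be mildly careful with the sup being approached as $r\to 1$, but since $|B\setminus B_r|\to 0$ this does not immediately work; the right fix is to apply the bound on a fixed annulus, say $B\setminus B_{1/2}$, noting $u(r)\ge u(1/2)$ for $r\ge 1/2$, so $u(1/2)^{p-1}|B\setminus B_{1/2}|\le K_{p-1}^{p-1}$, and then $\|u\|_{L^\infty}=\|u\|_{L^\infty(B\setminus B_{1/2})}$ is controlled by combining this annulus estimate with Lemma~\ref{bounded} applied on $B\setminus B_{1/2}$ — I would follow exactly the splitting \eqref{eq:infinity_norm_increasing}--\eqref{eq:a_priori_radial} used in Lemma~\ref{bounded}.)

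For step (2), once $\|u\|_{L^\infty(B)}\le K_\infty$ is established, the right-hand side of \eqref{eq:testu} is bounded by $\int_B \varphi(u)\,u\,dx\le K_\infty |B|\max_{s\in[0,K_\infty]}\varphi(s)$, since $0\le u\le K_\infty$ pointwise and $\varphi$ is continuous hence bounded on $[0,K_\infty]$. Therefore $\|u\|^p=\|\nabla u\|_{L^p}^p+m\|u\|_{L^p}^p\le K_\infty|B|\max_{[0,K_\infty]}\varphi$, giving the second claimed inequality. I expect the main obstacle to be the first step: getting the $L^\infty$-bound cleanly in terms of $\delta$ and $M$ alone (uniform over $\varphi\in\mathfrak F$), which requires carefully combining the elementary monotonicity estimate on a fixed annulus with the radial trace/Sobolev estimate of Lemma~\ref{bounded} exactly as in its proof, rather than any genuinely new idea.
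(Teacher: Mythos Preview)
Your step (2) is exactly what the paper does. The gap is in step (1): you never control the gradient. The annulus inequality $u(1/2)^{p-1}|B\setminus B_{1/2}|\le K_{p-1}^{p-1}$ only bounds $u(1/2)$, not $u(1)=\|u\|_{L^\infty}$, and the radial estimate \eqref{eq:a_priori_radial} you want to invoke still requires $\|u\|_{W^{1,1}(B\setminus B_{1/2})}$, hence a bound on $u'$ that you do not have. The $L^{p-1}$-bound together with monotonicity alone cannot yield an $L^\infty$-bound uniform over $\mathcal C$: take $u_n\in\mathcal C$ vanishing on $[0,1-\varepsilon_n]$ and rising linearly to $n$ on $[1-\varepsilon_n,1]$ with $\varepsilon_n=n^{-(p-1)}$; then $\|u_n\|_{L^{p-1}}$ stays bounded while $u_n(1)=n\to\infty$. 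So the equation must enter.

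The missing idea is precisely to use the equation for a pointwise gradient bound. Since $u'\ge0$, the radial equation reads $(r^{N-1}(u')^{p-1})'=r^{N-1}(mu^{p-1}-\varphi(u))$; integrating from $0$ to $r$ and discarding the nonpositive term $-\varphi(u)$ (this is where $\varphi\ge0$ is used) gives
\[
r^{N-1}u'(r)^{p-1}\le m\int_0^r u(t)^{p-1}t^{N-1}\,dt\le \frac{m}{|\partial B|}\,K_{p-1}^{p-1}.
\]
This bounds $\|u'\|_{L^{p-1}(B)}$, hence $\|u\|_{W^{1,p-1}(B)}\le (1+m)^{1/(p-1)}K_{p-1}$, and Lemma~\ref{bounded} with $q=p-1$ then delivers $K_\infty$ uniformly over $\varphi\in\mathfrak F$. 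After that, your identity \eqref{eq:testu} gives the second inequality exactly as you wrote.
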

\begin{proof}
Let $u\in\mathcal C$ be a solution of \eqref{Pphi}. We recall that the $p$-Laplacian of a radial function
is given by
$$\Delta_p u=\frac1{r^{N-1}}\left(r^{N-1}|u'(r)|^{p-2}u'(r)\right)'=|u'(r)|^{p-2}\left[(p-1)u''(r)+\frac1r(N-1)u'(r)\right].$$
Hence, since $u'\ge0$ a.e., we can write
$$\begin{cases}(r^{N-1}u'(r)^{p-1})'=r^{N-1}(mu^{p-1}-\varphi(u))\quad\mbox{in }(0,1),\\
u'(0)=u'(1)=0.\end{cases}$$
Then, by integrating the equation over the interval $(0,r)$ and using the fact that $\varphi$ is nonnegative, we have
$$\begin{aligned}r^{N-1}u'(r)^{p-1}&=\int_0^r (m\,u(t)^{p-1}-\varphi(u(t)))t^{N-1} dt\\
&\le m\int_0^r u(t)^{p-1}t^{N-1}dt=\frac{m}{|\partial B|}\int_B u(x)^{p-1}dx,\end{aligned}$$
where $|\partial B|$ is the $(N-1)$-dimensional measure of the unitary sphere in $\mathbb R^N$. Together with \eqref{p-1}, this gives
$$\|u\|_{W^{1,p-1}(B)}\le (1+m)^{1/(p-1)}K_{p-1}.$$
The first estimate then follows by Lemma~\ref{bounded} (by taking $q=p-1$), with $K_\infty:=(1+m)^{1/(p-1)}K_{p-1}C(N,p-1)$.
Finally, for the last estimate, we multiply the equation of \eqref{Pphi} by $u$, we integrate over $B$, and we obtain
$$\|u\|^p=\int_B\varphi(u)udx\le K_\infty|B|\max_{s\in[0,K_\infty]}\varphi(s),$$
which concludes the proof.
\end{proof}

\section{Existence of a mountain pass radial solution}\label{sec:mountain_pass_existece}

In this section we prove the existence of a radial solution of \eqref{P} via the Mountain Pass Theorem. Since the nonlinearity $f$ is possibly supercritical in the sense of Sobolev spaces, we need to truncate and to replace it by a subcritical function which coincides with $f$ in $[0,K_\infty]$, $K_\infty$ being defined in Lemma~\ref{apriori}. Then, we take advantage of the a priori estimates proved in the previous section to guarantee that the mountain pass solution found with the truncated function is indeed a solution of the original problem \eqref{P}.\medskip     

We define the critical Sobolev exponent $$p^*:=\begin{cases}\frac{Np}{N-p}\quad&\mbox{if }p<N,\\
+\infty&\mbox{otherwise}.\end{cases}$$

\begin{lemma}\label{truncated} For every $\ell\in(p,p^*)$, there exists $\tilde{f}\in \mathfrak F\cap C^1([0,\infty))$ nondecreasing, satisfying $(f_1)$-$(f_3)$, 
 \begin{equation}\label{subcritical}
\lim_{s\to\infty}\frac{\tilde{f}(s)}{s^{\ell-1}}=1 ,
\end{equation}
and with the property that if $u\in\mathcal C$ solves 
\begin{equation}\label{tildeP}\begin{cases}-\Delta_p u+mu^{p-1}=\tilde{f}(u)\quad&\mbox{in }B,\\
u>0&\mbox{in }B,\\
\partial_\nu u=0&\mbox{on }\partial B,
\end{cases}
\end{equation}
then $u$ solves \eqref{P}.  
\end{lemma}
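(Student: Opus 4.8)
The plan is to leave $f$ untouched on the interval where the a priori bound confines the relevant solutions, and to graft onto it a subcritical tail of growth $s^{\ell-1}$. The key input is Lemma~\ref{apriori}: every $u\in\mathcal C$ solving \eqref{Pphi} with $\varphi\in\mathfrak F$ satisfies $\|u\|_{L^\infty(B)}\le K_\infty$, with $K_\infty$ depending only on $N$, $p$, $\delta$, $M$. Applying this to the constant $u\equiv u_0$, which belongs to $\mathcal C$ and solves \eqref{Pphi} with $\varphi=f$ (recall $f\in\mathfrak F$ and $f(u_0)=mu_0^{p-1}$ by $(f_3)$), one also gets $u_0\le K_\infty$. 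Fix then $\bar s\ge\max\{K_\infty,M,u_0\}+1$, chosen moreover so that $f(\bar s)>(m+\delta)\bar s^{p-1}$ whenever such a point exists beyond $\max\{K_\infty,M,u_0\}$; the remaining borderline situation, in which $f(s)\equiv(m+\delta)s^{p-1}$ on a half-line $[s_0,\infty)$, is handled separately below.

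I would set $\tilde f:=f$ on $[0,\bar s]$ and extend it to $[\bar s,\infty)$ by $\tilde f(s):=f(\bar s)+\int_{\bar s}^s\rho(t)\,dt$, where $\rho\ge0$ is continuous, $\rho(\bar s)=f'(\bar s)$, $\rho(t)=(\ell-1)t^{\ell-2}$ for $t\ge R_1$, and $\int_{\bar s}^{R_1}\rho$ equals a prescribed (large) value; for $R_1$ large, the shape of $\rho$ on $[\bar s,R_1]$ leaves enough freedom to meet all requirements, and for $s\ge R_1$ one gets $\tilde f(s)=s^{\ell-1}+c_0$ with a constant $c_0\ge0$. By construction $\tilde f\in C^1([0,\infty))$ (it matches $f(\bar s)$ and $f'(\bar s)$ at $\bar s$), it is nondecreasing, and $\tilde f(s)/s^{\ell-1}\to1$, which is \eqref{subcritical}. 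Properties $(f_1)$--$(f_3)$ are inherited from $f$: $(f_1)$ and $(f_3)$ concern the behavior near $0$ and near $u_0$, where $\tilde f\equiv f$ since $u_0<\bar s$; $(f_2)$ holds a fortiori because $\tilde f(s)/s^{p-1}\to\infty$ as $\ell>p$. For $\tilde f\in\mathfrak F$ I only need $\tilde f(s)\ge(m+\delta)s^{p-1}$ for $s\ge M$: on $[M,\bar s]$ this is the same inequality for $f$; on $[\bar s,\infty)$ I use that $\tilde f$ is nondecreasing, that $f(\bar s)>(m+\delta)\bar s^{p-1}$ leaves a margin covering an interval $[\bar s,\bar s+\varepsilon]$, that $\rho$ can be front-loaded so that $\tilde f$ quickly exceeds $(m+\delta)R_1^{p-1}$, and that $s^{\ell-1}\ge(m+\delta)s^{p-1}$ once $s\ge(m+\delta)^{1/(\ell-p)}$ (take $R_1$ past this value). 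In the borderline case one instead keeps $\tilde f=f$ on $[0,s_0]$ and puts $\tilde f(s):=(m+\delta)s^{p-1}+(s-s_0-1)_+^{\ell-1}$ for $s\ge s_0$; this is $C^1$ since $\ell-1>1$, it glues smoothly because $f\equiv(m+\delta)s^{p-1}$ in a right neighborhood of $s_0$, and all the required properties are then immediate (note that $(f_3)$ forces $u_0<s_0$, so $\tilde f\equiv f$ near $u_0$).

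It remains to check the replacement property. If $u\in\mathcal C$ solves \eqref{tildeP}, then, since $\tilde f\in\mathfrak F$, Lemma~\ref{apriori} gives $\|u\|_{L^\infty(B)}\le K_\infty\le\bar s$, so $u(x)\in[0,\bar s]$ for every $x\in B$; there $\tilde f(u(x))=f(u(x))$, hence $u$ solves \eqref{P}. I expect the only genuine difficulty to be the gluing step of the second paragraph, namely producing a single $C^1$ nondecreasing function that simultaneously has prescribed power growth at infinity and never drops below $(m+\delta)s^{p-1}$, and in particular coping with the borderline case in which $f$ leaves no room to start the interpolation.
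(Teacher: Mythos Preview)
Your proof is correct and follows essentially the same route as the paper: fix a threshold beyond $K_\infty$, keep $f$ unchanged below it, graft on a $C^1$ nondecreasing tail of growth $s^{\ell-1}$ that stays in $\mathfrak F$, and then invoke Lemma~\ref{apriori} (applied to $\tilde f\in\mathfrak F$) for the replacement property. The only cosmetic difference is the case split: the paper fixes $s_0>\max\{K_\infty,M\}$ and branches on whether $f(s_0)=(m+\delta)s_0^{p-1}$ (tangent case, with the explicit extension $f(s_0)+(m+\delta)(s^{p-1}-s_0^{p-1})+(s-s_0)^{\ell-1}$) or $f(s_0)>(m+\delta)s_0^{p-1}$ (first bends $f$ down to tangency on a short interval, then applies the same formula), whereas you choose $\bar s$ at a point of strict inequality and interpolate via a free density $\rho$, handling the tangent-on-a-half-line situation separately.
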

\begin{proof} Let $\delta>0$ and $M>0$ be the constants defined in \eqref{Mdelta}, and fix $s_0>\max\{K_\infty,M\}$, with $K_\infty$ given in Lemma \ref{apriori}. By \eqref{Mdelta}, two possible cases arise.\smallskip

{\it Case $f(s_0)= (m+\delta)s_0^{p-1}$.} By \eqref{Mdelta}, $f$ is tangent at $s_0$ to the curve $(m+\delta)s^{p-1}$, hence $f'(s_0)=(m+\delta)(p-1)s_0^{p-2}$ and we can define the function $\tilde f:[0,\infty)\to[0,\infty)$ as
$$\tilde{f}(s):=\begin{cases}f(s)\quad&\mbox{if }s\in[0,s_0],\\
f(s_0)+(m+\delta)(s^{p-1}-s_0^{p-1})+(s-s_0)^{\ell-1} &\mbox{otherwise}.\end{cases}$$ 
\smallskip  

{\it Case $f(s_0)> (m+\delta)s_0^{p-1}$.}  First, we modify $f$ in a right neighborhood of $s_0$, that is to say,  we consider a $C^1$ nondecreasing function $f_{\mathrm{mod}}:[0,s_0+\varepsilon]\to[0,\infty)$ in such a way that $f_{\mathrm{mod}}(s)=f(s)$ in $[0,s_0]$, 
$f_{\mathrm{mod}}(s)\ge (m+\delta)s^{p-1}$ in $[s_0,s_0+\varepsilon]$, and $f'_{\mathrm{mod}}(s_0+\varepsilon)=(m+\delta)(p-1)(s_0+\varepsilon)^{p-2}$. Then, we define $\tilde{f}(s)$ as in the previous case, with $f$ replaced by $f_{\mathrm{mod}}$ and $s_0$ by $s_0+\varepsilon$.  
\smallskip

In both cases, it is easy to check that $\tilde{f}$ is of class $C^1([0,\infty))$, nonnegative, nondecreasing, satisfies $(f_1)$, $(f_2)$, and \eqref{subcritical}. Since the constant $u_0$ given in $(f_3)$ is a solution of \eqref{P} in $\mathcal{C}$, we know by Lemma~\ref{apriori} that $u_0\le K_\infty< s_0$. Hence $\tilde f$ verifies also $(f_3)$. 

Finally, let $u\in\mathcal C$ solve \eqref{tildeP}, we want to show that $u$ solves \eqref{P}. To this aim, we notice that $\tilde f$ belongs to $\mathfrak F$ by construction. By Lemma \ref{apriori}, $\|u\|_{L^\infty(B)}<K_\infty$. Being $s_0>K_\infty$, we have $\tilde f(u)=f(u)$, hence $u$ solves \eqref{P}.
\end{proof}

As a consequence of the proof of the previous lemma, there exists $C>0$ for which 
\begin{equation}\label{crescitasottocritica}
\tilde f(s)\le C (1+s^{\ell-1})\quad\mbox{for all }s\ge0.
\end{equation}
From now on in the paper, we set $\tilde f= 0$ in $(-\infty, 0)$. We define the energy functional $I:W^{1,p}(B)\to\mathbb R$ associated to the problem \eqref{tildeP} by
\begin{equation}\label{I}
I(u):= \int_B\left(\frac{|\nabla u|^p+m|u|^p}p-\tilde{F}(u)\right)dx,
\end{equation}  
where $\tilde F(u):=\int_0^u \tilde f(s)ds$. Because of \eqref{subcritical} and the Sobolev embedding, the functional $I$ is well defined and of class $C^2$, being $p>2$.  

We define the operator $T:(W^{1,p}(B))'\to W^{1,p}(B)$ as 
\begin{equation}\label{T}T(w)=v, \quad\mbox{where $v$ solves }
\quad(P_w)\;\begin{cases}-\Delta_p v+m|v|^{p-2}v=w\quad&\mbox{in }B,\\
\partial_\nu v=0&\mbox{on }\partial B.
\end{cases}
\end{equation}

We observe that the definition is well posed because, for all $w\in (W^{1,p}(B))'$, the problem $(P_w)$ admits a unique weak solution $v\in W^{1,p}(B)$. To prove the existence one can apply the direct method of Calculus of Variations, while uniqueness is a consequence of the strict convexity of the map $u\mapsto \|u\|^p$. Furthermore, by \cite[Lemma 2.1]{BonderRossi} we know that 
\begin{equation}\label{eq:T_continuous}
T\in C((W^{1,p}(B))';W^{1,p}(B)).
\end{equation}

We introduce also the operator
\begin{equation}\label{eq:tildeT_def}
\tilde T:W^{1,p}(B)\to W^{1,p}(B) \quad\textrm{defined by}\quad \tilde T(u)=T(\tilde f(u)),
\end{equation}
with $T$ given in \eqref{T}. Being $\ell<p^*$, $u\in W^{1,p}(B)$ implies $u\in L^\ell(B)$. Hence, by \eqref{crescitasottocritica}, $\tilde f(u)\in L^{\ell'}(B)\subset (W^{1,p}(B))'$ and $\tilde T$ is well defined.

\begin{proposition}\label{Ttildecompact}
The operator $\tilde T$ is compact, i.e. it maps bounded subsets of $W^{1,p}(B)$ into precompact subsets of $W^{1,p}(B)$. Furthermore, there exist two positive constants $a,\,b$ such that for all $u\in W^{1,p}(B)$ the following properties hold
\begin{equation}\label{J2}\begin{aligned}
&I'(u)[u-\tilde T(u)]\ge a\|u-\tilde T(u)\|^p,\\
&\|I'(u)\|_{*}\le b\|u-\tilde T(u)\|(\|u\|+\|\tilde T(u)\|)^{p-2}.
\end{aligned}
\end{equation}
\end{proposition}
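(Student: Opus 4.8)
The plan is to split the statement into two independent pieces: compactness of $\tilde T$, and the two inequalities in \eqref{J2}. For compactness, I would use the factorization $\tilde T = T\circ \tilde f$. By \eqref{crescitasottocritica}, the Nemytskii operator $u\mapsto \tilde f(u)$ maps $W^{1,p}(B)$ continuously into $L^{\ell'}(B)$; since $\ell<p^*$, the embedding $W^{1,p}(B)\hookrightarrow L^\ell(B)$ is compact, and by duality the inclusion of $L^{\ell'}(B)$ into $(W^{1,p}(B))'$ is compact as well. Composing the bounded-to-weakly-convergent behaviour with the continuity of $T$ from \eqref{eq:T_continuous}: if $(u_n)$ is bounded in $W^{1,p}(B)$, then along a subsequence $u_n\to u$ in $L^\ell(B)$, hence $\tilde f(u_n)\to \tilde f(u)$ in $L^{\ell'}(B)$ by the continuity of the Nemytskii operator (here one uses $|\tilde f(s)-\tilde f(t)|$ controlled by the growth and a dominated-convergence argument), so $\tilde f(u_n)\to\tilde f(u)$ strongly in $(W^{1,p}(B))'$, and finally $\tilde T(u_n)=T(\tilde f(u_n))\to T(\tilde f(u))=\tilde T(u)$ in $W^{1,p}(B)$ by \eqref{eq:T_continuous}. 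This gives precompactness of the image.

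For the first inequality in \eqref{J2}, write out $I'(u)[\varphi]=\int_B\big(|\nabla u|^{p-2}\nabla u\cdot\nabla\varphi+m|u|^{p-2}u\,\varphi-\tilde f(u)\varphi\big)dx$ and note that, by the definition \eqref{T}–\eqref{eq:tildeT_def} of $v=\tilde T(u)$, we have $\int_B\big(|\nabla v|^{p-2}\nabla v\cdot\nabla\varphi+m|v|^{p-2}v\,\varphi\big)dx=\int_B\tilde f(u)\varphi\,dx$ for every test function $\varphi$. Subtracting, with $\varphi=u-v$,
\begin{equation*}
I'(u)[u-\tilde T(u)]=\int_B\big(|\nabla u|^{p-2}\nabla u-|\nabla v|^{p-2}\nabla v\big)\cdot(\nabla u-\nabla v)\,dx+m\int_B\big(|u|^{p-2}u-|v|^{p-2}v\big)(u-v)\,dx.
\end{equation*}
Both terms are nonnegative; to get the lower bound $a\|u-\tilde T(u)\|^p$ I would invoke the classical $p>2$ monotonicity inequality $(|\xi|^{p-2}\xi-|\eta|^{p-2}\eta)\cdot(\xi-\eta)\ge c_p|\xi-\eta|^p$, valid for vectors in $\mathbb R^N$ and, in scalar form, for $u,v$. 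Applying it to the gradient term and to the zeroth-order term and summing yields $I'(u)[u-\tilde T(u)]\ge c_p\big(\|\nabla u-\nabla v\|_{L^p}^p+m\|u-v\|_{L^p}^p\big)=c_p\|u-\tilde T(u)\|^p$, so $a:=c_p$ works.

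For the second inequality, I would again use the weak formulation for $v=\tilde T(u)$ to rewrite, for arbitrary $\varphi$ with $\|\varphi\|\le1$,
\begin{equation*}
I'(u)[\varphi]=\int_B\big(|\nabla u|^{p-2}\nabla u-|\nabla v|^{p-2}\nabla v\big)\cdot\nabla\varphi\,dx+m\int_B\big(|u|^{p-2}u-|v|^{p-2}v\big)\varphi\,dx,
\end{equation*}
then estimate each integrand by the elementary inequality $\big||\xi|^{p-2}\xi-|\eta|^{p-2}\eta\big|\le C_p|\xi-\eta|\,(|\xi|+|\eta|)^{p-2}$ (for $p\ge2$), followed by Hölder's inequality with exponents $p$, $p$, $\tfrac{p}{p-2}$ on the product $|\xi-\eta|\cdot(|\xi|+|\eta|)^{p-2}\cdot|\nabla\varphi|$. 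This produces $\|\nabla u-\nabla v\|_{L^p}\,(\|\nabla u\|_{L^p}+\|\nabla v\|_{L^p})^{p-2}\,\|\nabla\varphi\|_{L^p}$ and an analogous zeroth-order product; bounding each factor by the full norm $\|\cdot\|$ and using $\|\varphi\|\le1$ gives $\|I'(u)\|_*\le b\|u-\tilde T(u)\|(\|u\|+\|\tilde T(u)\|)^{p-2}$ for a suitable $b$. I expect the main technical obstacle to be the strong continuity of the Nemytskii map $u\mapsto\tilde f(u)$ from $L^\ell$ into $L^{\ell'}$ needed in the compactness part — this is where one must use the precise subcritical growth \eqref{subcritical}/\eqref{crescitasottocritica} together with a subsequence-and-dominated-convergence argument — rather than the two algebraic inequalities, which are standard once the right pointwise bounds are quoted.
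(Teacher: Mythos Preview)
Your proposal is correct and follows essentially the same approach as the paper: the compactness argument is virtually identical (compact embedding $W^{1,p}\hookrightarrow L^\ell$, continuity of the Nemytskii map $L^\ell\to L^{\ell'}$ via dominated convergence, then continuity of $T$), and for the two inequalities in \eqref{J2} the paper simply refers to \cite[Lemmas~3.7, 3.8]{BL}, whose proofs are precisely the monotonicity/H\"older computations you outline. If anything, you have written out more detail than the paper itself.
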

\begin{proof} Let $(u_n)$ be a bounded sequence in the reflexive Banach space $W^{1,p}(B)$. Up to a subsequence $u_n\rightharpoonup u$ in $W^{1,p}(B)$ and $u_n\to u$ in $L^\ell(B)$, being $W^{1,p}(B)$ compactly embedded in $L^\ell(B)$.

Now, we claim that $\tilde f(u_n)\to\tilde f(u)$ in $(W^{1,p}(B))'$. Once the claim is proved, the first part of the statement follows by using the continuity \eqref{eq:T_continuous} of the operator $T$. We pick any subsequence, still denoted by $(u_n)$, and we know that, up to another subsequence, $u_n\to u$  a.e. in $B$ and that there exists $h\in L^\ell(B)$ such that $|u_n|\le h$ a.e. in $B$ for all $n$. By the continuity of $\tilde f$ we get that $|\tilde f(u_n)-\tilde f(u)|^{\ell'}\to0$ a.e. in $B$ and that $|\tilde f(u_n)-\tilde f(u)|^{\ell'}\le 2^{\ell'-1}(\tilde f(u_n)^{\ell'}+\tilde f(u)^{\ell'})\le C(1+h^\ell)\in L^1(B)$. Hence, the Dominated Convergence Theorem guarantees that $\tilde f(u_n)\to\tilde f(u)$ in $L^{\ell'}(B)$. By the arbitrariness of the subsequence picked, we have that the same convergence result holds for the whole sequence $(u_n)$. The claim follows at once from the embedding $L^{\ell'}(B)\hookrightarrow (W^{1,p}(B))'$. 

Finally, inequalities \eqref{J2} follow by \eqref{crescitasottocritica} as in the proof of~\cite[Lemmas~3.7, 3.8]{BL}.
\end{proof}

\begin{remark}\label{TfixI'} 
We observe here that \eqref{J2} implies that $\{u\,:\,\tilde T(u)=u\}$ coincides with the set of critical points of $I$.
\end{remark}

\begin{lemma}[\textbf{Palais-Smale condition}]\label{PalaisSmale} The functional $I$ satisfies the Palais-Smale condition, i.e. every sequence $(u_n)\subset W^{1,p}(B)$ such that 
$$(I(u_n)) \mbox{ is bounded}\quad\mbox{ and }\quad I'(u_n)\to 0\mbox{ in }(W^{1,p}(B))'$$ 
admits a convergent subsequence.
\end{lemma}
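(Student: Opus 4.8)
The plan is to verify the Palais--Smale condition by exploiting the decomposition $u = \tilde T(u) + (u - \tilde T(u))$ together with the quantitative estimates in \eqref{J2} and the compactness of $\tilde T$ from Proposition~\ref{Ttildecompact}. First I would show that any Palais--Smale sequence $(u_n)$ is bounded in $W^{1,p}(B)$. Using $(f_2)$ (equivalently the Ambrosetti--Rabinowitz--type superlinearity encoded in \eqref{Mdelta} and \eqref{subcritical}), the standard argument applies: from $I(u_n) = O(1)$ and $I'(u_n)[u_n] = o(\|u_n\|)$, a linear combination $I(u_n) - \frac{1}{\ell} I'(u_n)[u_n]$ controls $\|u_n\|^p$ from below, up to lower-order terms, because the subcritical growth \eqref{subcritical} gives $\tilde f(s)s - \ell \tilde F(s) \ge -C$ for all $s \ge 0$ and $\left(\frac{1}{p} - \frac{1}{\ell}\right) > 0$. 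Hence $(\|u_n\|)$ is bounded.

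Next, with $(u_n)$ bounded, the compactness of $\tilde T$ yields that $\tilde T(u_n)$ converges strongly in $W^{1,p}(B)$ along a subsequence; call the limit $w$. It remains to upgrade this to strong convergence of $u_n$ itself. Here I would invoke the first inequality in \eqref{J2}:
\begin{equation*}
a \|u_n - \tilde T(u_n)\|^p \le I'(u_n)[u_n - \tilde T(u_n)] \le \|I'(u_n)\|_{*}\, \|u_n - \tilde T(u_n)\|.
\end{equation*}
Since $(u_n)$ and hence $(u_n - \tilde T(u_n))$ are bounded and $\|I'(u_n)\|_{*} \to 0$, the right-hand side tends to $0$, forcing $\|u_n - \tilde T(u_n)\| \to 0$. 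Combining this with $\tilde T(u_n) \to w$ gives $u_n \to w$ strongly in $W^{1,p}(B)$, which is the desired conclusion.

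The main obstacle is the boundedness step, since without it the compactness of $\tilde T$ is useless; everything else is a short deduction from \eqref{J2}. The delicate point in establishing boundedness is that we only have $\liminf_{s\to\infty} f(s)/s^{p-1} > m$ rather than a clean global Ambrosetti--Rabinowitz condition, so I would argue directly with the truncated nonlinearity $\tilde f$, whose precise form (a $p$-power term plus a genuine $(\ell-1)$-power term with $\ell > p$, as constructed in Lemma~\ref{truncated}) guarantees the inequality $\tilde f(s)\,s - \ell\,\tilde F(s) \ge m\left(1 - \tfrac{\ell}{p}\right)s^p - C_1$, which has the correct sign of the leading coefficient since $\ell > p$ and $m>0$ — wait, this has the wrong sign, so instead one uses $\tilde f(s)\,s - p\,\tilde F(s) \ge c\, s^{\ell} - C_1$ for some $c>0$, and then $p\, I(u_n) - I'(u_n)[u_n] = p\int_B \tilde F(u_n) - \int_B \tilde f(u_n) u_n \le -c\|u_n\|_{L^\ell}^\ell + C_1|B|$, which bounds $\|u_n\|_{L^\ell}$; feeding this back into $I(u_n) = O(1)$ then bounds $\int_B |\nabla u_n|^p + m|u_n|^p$, i.e. $\|u_n\|$. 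A careful bookkeeping of these inequalities, using only $(f_1)$, $(f_2)$, \eqref{crescitasottocritica} and \eqref{subcritical}, completes the proof.
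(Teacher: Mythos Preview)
Your approach is correct and matches the paper's overall architecture: establish boundedness of $(u_n)$ via an Ambrosetti--Rabinowitz--type estimate, then obtain strong convergence from the first inequality in \eqref{J2} together with the compactness of $\tilde T$. The convergence step is identical to the paper's.

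For boundedness, two remarks. First, a sign slip: $pI(u_n)-I'(u_n)[u_n]=\int_B\bigl(\tilde f(u_n)u_n-p\tilde F(u_n)\bigr)\,dx$, not its negative; with this corrected, your two-step route (bound $\|(u_n)_+\|_{L^\ell}$ from $pI(u_n)-I'(u_n)[u_n]$, then feed back into $I(u_n)=O(1)$ to bound $\|u_n\|$) goes through. Second, the paper avoids this detour by choosing $\mu$ strictly between $p$ and $\ell$ rather than at either endpoint: since \eqref{subcritical} and L'H\^opital give $\tilde f(s)s/\tilde F(s)\to\ell$, any $\mu\in(p,\ell)$ satisfies $\tilde f(s)s\ge\mu\tilde F(s)$ for $s$ large, and then
\[
I(u_n)-\tfrac{1}{\mu}I'(u_n)[u_n]\ge\Bigl(\tfrac{1}{p}-\tfrac{1}{\mu}\Bigr)\|u_n\|^p-C
\]
bounds $\|u_n\|$ directly. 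This sidesteps the issue you ran into with $\mu=\ell$, where the sign of the remainder term is not controlled.
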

\begin{proof} Let $(u_n)\subset W^{1,p}(B)$ be a (PS)-sequence for $I$ as in the statement. By \eqref{subcritical} and L'H\^opital's rule, we get 
$$\lim_{s\to+\infty}\frac{\tilde{F}(s)}{s^\ell}=\lim_{s\to+\infty}\frac{\tilde f(s)}{\ell s^{\ell-1}}=\frac1\ell.$$
Thus, $$\lim_{s\to+\infty}\frac{\tilde{f}(s)s}{\tilde{F}(s)}=\lim_{s\to+\infty}\frac{\tilde{f}(s)}{s^{\ell-1}}\frac{s^\ell}{\tilde{F}(s)}=\ell$$
and so, there exist $\mu\in(p,\ell]$ and $R_0>0$ such that $\tilde{f}(s)s\ge\mu\tilde{F}(s)$ for all $s\ge R_0$. Now, we estimate
$$I(u_n)-\frac1\mu I'(u_n)[u_n]\ge \left(\frac1p-\frac1\mu\right)\|u_n\|^p+\int_{\{u_n\le R_0\}}\left(\frac1\mu\tilde{f}(u_n)u_n-\tilde F(u_n)\right)dx$$
and, being $(u_n)$ a (PS)-sequence,  
$$I(u_n)-\frac1\mu I'(u_n)[u_n]\le |I(u_n)|+\frac1\mu \|I'(u_n)\|_*\|u_n\|\le C(1+\|u_n\|),$$
for some $C>0$, where we have denoted by $\|\cdot\|_*$ the norm of the dual space of $W^{1,p}(B)$. Since we know that $\int_{\{u_n\le R_0\}}\left(\frac1\mu\tilde{f}(u_n)u_n-\tilde F(u_n)\right)dx$ is uniformly bounded in $n$, we get 
$$\left(\frac1p-\frac1\mu\right)\|u_n\|^p\le C(1+\|u_n\|).$$ Therefore, $(u_n)$ is bounded in $W^{1,p}(B)$ and there exists $u\in W^{1,p}(B)$ such that $u_n\rightharpoonup u$ in $W^{1,p}(B)$. Hence, Proposition \ref{Ttildecompact} guarantees that, up to a subsequence, $\tilde{T}(u_n)\to\tilde T(u)$ in $W^{1,p}(B)$. This implies, by the triangle inequality, that 
\begin{equation}\label{trineq}
\limsup_{n\to\infty}\|u_n-\tilde T(u)\|\le \limsup_{n\to\infty}\|u_n-\tilde T(u_n)\|.
\end{equation}
On the other hand, by the first inequality of \eqref{J2} we obtain 
$$\|u_n-\tilde T(u_n)\|^p\le \frac{C}a\|I'(u_n)\|_*\to 0.$$
Together with \eqref{trineq}, we conclude that $u_n\to \tilde T(u)=u$ in $W^{1,p}(B)$.
\end{proof}

We define 
\begin{equation}\label{u-u+}
\begin{aligned}
u-&:= \sup \{t \in [0,u_0)\,:\, \tilde f(t)=mt^{p-1}\},\\
u_+&:= \inf \{t \in (u_0,+\infty) \,:\, \tilde f(t)=mt^{p-1}\}.
\end{aligned}
\end{equation}
By Lemma \ref{truncated}, $\tilde f$ satisfies $(f_3)$, so that $u_0$ is an isolated 
zero of the function $\tilde f(t)-mt^{p-1}$, hence 
\begin{equation}\label{u-+}
u_-\neq u_0\quad\mbox{ and }\quad u_+\neq u_0.
\end{equation} 
We point out that $u_+= +\infty$ is possible. Next, we define the set  
\begin{equation}\label{Cstar}
\mathcal C_*:= \{u \in \mathcal C\::\: \text{$u_- \le  u \le u_+$ in $B$}\}.
\end{equation}
Clearly, $\mathcal C_*$ is closed and convex. 

\begin{lemma}\label{cononelcono}
The operator $\tilde T$ defined in \eqref{eq:tildeT_def} satisfies
$\tilde T(\mathcal C_*)\subseteq \mathcal C_*$.
\end{lemma}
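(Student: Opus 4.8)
I want to show that if $u\in\mathcal C_*$ then $v:=\tilde T(u)$ again lies in $\mathcal C_*$. Recall $v$ is the weak solution of $-\Delta_p v+m|v|^{p-2}v=\tilde f(u)$ in $B$ with $\partial_\nu v=0$. The proof splits naturally into three claims: (a) $v$ is radial; (b) $v$ is nonnegative and radially nondecreasing, i.e. $v\in\mathcal C$; (c) $u_-\le v\le u_+$ in $B$. I would carry them out in that order.

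\emph{Step (a): radial symmetry.} Since $u\in\mathcal C$ is radial, the datum $\tilde f(u)$ is radial. By uniqueness of the weak solution of $(P_w)$ (already noted in the excerpt, from strict convexity of $u\mapsto\|u\|^p$), $v$ must be radial: for any rotation $R$, $v\circ R$ solves the same problem, hence equals $v$.

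\emph{Step (b): $v\in\mathcal C$.} Nonnegativity follows by testing the equation for $v$ with $v^-:=\max\{-v,0\}\in W^{1,p}(B)$, which gives $\int_B(|\nabla v^-|^p+m|v^-|^p)\,dx=-\int_B\tilde f(u)\,v^-\,dx\le 0$ because $\tilde f\ge 0$ and $v^-\ge 0$; hence $v^-\equiv 0$. For monotonicity I would work with the ODE formulation: writing $v=v(r)$, the radial equation reads $(r^{N-1}|v'|^{p-2}v')'=r^{N-1}(m\,v^{p-1}-\tilde f(u))$ in $(0,1)$ with $v'(0)=v'(1)=0$. Integrating from $0$ to $r$,
$$r^{N-1}|v'(r)|^{p-2}v'(r)=\int_0^r t^{N-1}\bigl(m\,v(t)^{p-1}-\tilde f(u(t))\bigr)\,dt.$$
I claim the right-hand side is $\ge 0$ for every $r$, which forces $v'\ge 0$. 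The clean way to see this: the function $\psi(r):=r^{N-1}|v'(r)|^{p-2}v'(r)$ vanishes at $r=0$ and $r=1$; if it were negative somewhere, then since $\psi(1)=0$ there is a point $r_1$ where $\psi(r_1)<0$ and $\psi'(r_1)\ge 0$, i.e. $m\,v(r_1)^{p-1}\ge \tilde f(u(r_1))$. But at such a point $v'(r_1)<0$, so $v$ is locally decreasing; combined with the fact that $u$ is nondecreasing and $\tilde f$ nondecreasing, one propagates the sign of $\psi$ to get a contradiction with $\psi(1)=0$. (An alternative, perhaps cleaner, route: note $\tilde f(u)$ is itself radially nondecreasing since $\tilde f$ is nondecreasing and $u\in\mathcal C$, then invoke a known comparison/monotonicity result for radial $p$-Laplacian equations with nondecreasing radial data — this is the step where I expect the real work to be, and where one must be careful about the degeneracy of $\Delta_p$ at points where $v'=0$.)

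\emph{Step (c): the two-sided bound.} Here I use that $u_-$ and $u_+$ are constant solutions of $-\Delta_p w+m w^{p-1}=\tilde f(w)$ with $\tilde f(u_\pm)=m u_\pm^{p-1}$, and that $u\in\mathcal C_*$ means $u_-\le u\le u_+$. Since $\tilde f$ is nondecreasing, $\tilde f(u)\le \tilde f(u_+)=m\,u_+^{p-1}$, so $v$ is a subsolution of $-\Delta_p w+m w^{p-1}=m\,u_+^{p-1}$, of which the constant $u_+$ is a solution; by the weak comparison principle for $-\Delta_p+m|\cdot|^{p-2}(\cdot)$ (the operator is monotone, so comparison holds — test with $(v-u_+)^+$), we get $v\le u_+$. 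Symmetrically $\tilde f(u)\ge \tilde f(u_-)=m\,u_-^{p-1}$ gives $v\ge u_-$, again by comparison (and when $u_+=+\infty$ the upper bound is vacuous). Thus $v\in\mathcal C_*$.

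\emph{Main obstacle.} The delicate point is Step (b), the preservation of radial monotonicity: one cannot simply differentiate the equation because of the degeneracy of the $p$-Laplacian, so the argument has to be run at the level of the first-order quantity $r\mapsto r^{N-1}|v'|^{p-2}v'$ and its monotonicity, exploiting that the right-hand side integrand changes sign at most once (as $t$ increases, $m\,v(t)^{p-1}-\tilde f(u(t))$ goes from possibly positive to possibly negative once one knows $v$ is monotone — which is what we are trying to prove, so the argument is really a bootstrap/continuity one). This is presumably where the authors' proof concentrates its effort, likely via a comparison argument with the constant obtained by averaging, or via the explicit integral representation above together with the monotonicity of $\tilde f(u)$.
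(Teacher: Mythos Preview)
Your Steps (a), (c), and the nonnegativity part of (b) are correct and essentially match the paper (the paper cites $C^{1,\alpha}$ regularity and a reference for $v\ge 0$, but testing with $v^-$ is equivalent; the comparison with $u_\pm$ via testing with $(v-u_-)^-$ and $(v-u_+)^+$ is exactly what the paper does).

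The real content is the monotonicity in (b), and here your sketch has a gap. You locate $r_1$ with $\psi(r_1)<0$, $\psi'(r_1)\ge 0$, i.e.\ $m v(r_1)^{p-1}\ge\tilde f(u(r_1))$; but then ``propagation'' fails: for $r>r_1$ (while $\psi<0$) you have $v(r)\le v(r_1)$ and $\tilde f(u(r))\ge\tilde f(u(r_1))$, and these inequalities go the \emph{wrong} way to conclude $\psi'(r)\ge 0$. The ODE route can be salvaged, but differently: take a maximal interval $(a,b)$ on which $\psi<0$, so $\psi(a)=\psi(b)=0$, $\psi'(a)\le 0$, $\psi'(b)\ge 0$; then $\tilde f(u(b))\le m v(b)^{p-1}<m v(a)^{p-1}\le\tilde f(u(a))\le\tilde f(u(b))$ (strict middle inequality since $v'<0$ on $(a,b)$), a contradiction.

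The paper avoids the ODE entirely with a weak-formulation trick. Fix $r\in(0,1)$ and split on the sign of $\tilde f(u(r))-m v(r)^{p-1}$. If $\tilde f(u(r))\le m v(r)^{p-1}$, test the equation with $\varphi=(v-v(r))^+\chi_{B_r}$: since $\tilde f(u)\le\tilde f(u(r))\le m v(r)^{p-1}$ on $B_r$ (monotonicity of $u$ and $\tilde f$) and $(v^{p-1}-v(r)^{p-1})(v-v(r))^+\ge 0$, one obtains $\varphi\equiv 0$, i.e.\ $v\le v(r)$ on $B_r$. If instead $\tilde f(u(r))>m v(r)^{p-1}$, test with $(v-v(r))^-\chi_{B\setminus B_r}$ to get $v\ge v(r)$ on $B\setminus B_r$. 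Thus every $r$ enjoys one of these two one-sided bounds, and a short argument shows this forces $v$ nondecreasing. This is cleaner because it never uses the equation pointwise at degenerate points.
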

\begin{proof} We first note that $u\in\mathcal C_*$ implies $\tilde f(u)\in\mathcal C$, by the properties of $\tilde f$. Now, let $u\in\mathcal C_*$ and $v:=\tilde T(u)$. By standard regularity theory (see e.g. \cite[Theorem 2]{Lieberman}), $v\in C^{1,\alpha}(\bar B)$ for some $\alpha\in(0,1)$. Therefore, by \cite[Theorem 1.1]{Damascelli}, we know that $v\ge0$ in $B$. Furthermore, due to uniqueness, $v$ is radial. Now we prove that $v$ is nondecreasing. It is enough to show that for every $r\in(0,1)$ one of the following cases occurs:
\begin{itemize}
\item[$(a)$] $v(s)\le v(r)$ for all $s\in(0,r)$, 
\item[$(b)$] $v(s)\ge v(r)$ for all $s\in(r,1)$. 
\end{itemize}
Indeed, if $v(t)>v(r)$ for some $t<r$, by the continuity of $v$, there exists $s\in(t,r)$ for which $v(t)>v(s)> v(r)$ which violates both $(a)$ and $(b)$.   
Now, we fix $r\in(0,1)$. If $\tilde f(u(r))\le m v(r)^{p-1}$, we consider the test function
$$\varphi(x):=\begin{cases}(v(|x|)-v(r))^+\quad&\mbox{if }|x|\le r,\\
0&\mbox{otherwise} 
\end{cases}$$
and we have
$$\begin{aligned}\int_{B_r}(|\nabla v|^{p-2}\nabla v\cdot\nabla \varphi+m v^{p-1}\varphi)dx&=\int_{B_r}\tilde f(u)\varphi dx\le\tilde{f}(u(r))\int_{B_r}\varphi dx\\&\le m v(r)^{p-1}\int_{B_r}\varphi dx.\end{aligned}$$
Hence, 
$$\int_{B_r}|\nabla \varphi|^{p}dx+m\int_{B_r}(v(|x|)^{p-1}-v(r)^{p-1})(v(|x|)-v(r))^+ dx\le0,$$
that is $\varphi\equiv 0$, i.e., the case $(a)$ occurs. Analogously, if $\tilde f(u(r))> m v(r)^{p-1}$, we consider the test function
$$\varphi(x):=\begin{cases}0\quad&\mbox{if }|x|\le r,\\
(v(|x|)-v(r))^-&\mbox{otherwise} 
\end{cases}$$ 
and we prove that $(b)$ holds. Therefore, we have proved that $v$ is nondecreasing. 

It remains to show that $u_-\le v\le u_+$. By the fact that $\tilde{f}(u_-)=mu_-^{p-1}$ and that $\tilde f$ is nondecreasing we get 
$$-\Delta_p (v-u_-)+m(v^{p-1}-u_-^{p-1})=\tilde f(u)-\tilde f(u_-)\ge0.$$
Hence, if we multiply the equation above by $(v-u_-)^-$ and integrate it over $B$, we obtain
$$-\|\nabla (v-u_-)^-\|_p^p-m\int_{B}(u_-^{p-1}-v^{p-1})(v-u_-)^-dx\ge 0, $$
that is $(v-u_-)^-\equiv 0$ in $B$. Similarly, if $u_+<+\infty$ we prove that $v\le u^+$ in $B$. 
\end{proof}

\begin{lemma}[\textbf{Locally Lipschitz vector field}]\label{KTI} Let $W:=W^{1,p}(B)\setminus\{u\,:\,\tilde T(u)=u\}$. There exists a locally Lipschitz continuous operator $K: W\to W^{1,p}(B)$ satisfying the following properties: 
\begin{itemize}
\item[(i)] $K(\mathcal C_*\cap W)\subset \mathcal C_*$;
\item[(ii)] $\frac12\|u-K(u)\|\le\|u-\tilde T(u)\|\le2\|u-K(u)\|$ for all $u\in W$;
\item[(iii)] for all $u\in W$
$$I'(u)[u-K(u)]\ge\frac{a}2 \|u-\tilde T(u)\|^p,$$
where $a>0$ is the constant given in Proposition \ref{Ttildecompact}.
\end{itemize}
\end{lemma}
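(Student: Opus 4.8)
The idea is the standard pseudo-gradient/deformation construction adapted to a convex cone, in the style of the $p=2$ argument in \cite{BNW}. The operator $\tilde T$ plays the role of a ``gradient-like'' map: by Remark \ref{TfixI'} its fixed point set is exactly the critical set of $I$, and by \eqref{J2} the direction $u-\tilde T(u)$ is a descent direction for $I$ with a quantitative lower bound. The only defect of $\tilde T$ is that it need not be locally Lipschitz — only continuous, by \eqref{eq:T_continuous}. So the plan is to perturb $\tilde T$ on $W$ by a locally Lipschitz map that stays uniformly close to it, while being careful that the perturbation still maps $\mathcal C_* \cap W$ into $\mathcal C_*$.

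First I would fix $u \in W$. Since $\tilde T$ is continuous and $u - \tilde T(u) \neq 0$, there is a radius $\rho_u > 0$ such that $\|\tilde T(w) - \tilde T(u)\| < \tfrac14\|u - \tilde T(u)\|$ for all $w \in B(u,\rho_u)$; shrinking $\rho_u$ we may also assume $\rho_u < \tfrac14 \|u-\tilde T(u)\|$, so that $\|w - \tilde T(w)\|$ stays comparable to $\|u - \tilde T(u)\|$ (within a factor $2$, say) on that ball, and in particular $B(u,\rho_u) \subset W$. The collection $\{B(u,\rho_u/2)\}_{u\in W}$ is an open cover of the metric space $W$, which is paracompact; take a locally finite refinement with a subordinate locally Lipschitz partition of unity $\{\psi_i\}$, each $\psi_i$ supported in some ball $B(u_i, \rho_{u_i}/2)$. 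On that support define the constant value $y_i := \tilde T(u_i)$, and then set
\[
K(u) := \sum_i \psi_i(u)\, y_i.
\]
This is a locally Lipschitz map $W \to W^{1,p}(B)$, being a locally finite sum of locally Lipschitz functions times fixed vectors. The key point is that for each $u$, the finitely many $y_i$ that contribute (those $i$ with $\psi_i(u) > 0$, hence $u \in B(u_i,\rho_{u_i}/2)$) all satisfy $\|y_i - \tilde T(u)\| \le \|\tilde T(u_i) - \tilde T(u)\| < \tfrac14 \|u - \tilde T(u)\|$ by the choice of $\rho_{u_i}$ — wait, one must be slightly careful: $u$ lies in $B(u_i, \rho_{u_i}/2)$, and the good estimate was arranged on $B(u_i,\rho_{u_i})$ comparing to $\tilde T(u_i)$, so $\|\tilde T(u) - \tilde T(u_i)\| < \tfrac14\|u_i - \tilde T(u_i)\|$, and since $u \in B(u_i,\rho_{u_i})$ the quantities $\|u-\tilde T(u)\|$ and $\|u_i - \tilde T(u_i)\|$ are comparable; absorbing constants, $\|y_i - \tilde T(u)\| \le \tfrac12 \|u - \tilde T(u)\|$. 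Summing against $\sum_i \psi_i(u) = 1$ gives $\|K(u) - \tilde T(u)\| \le \tfrac12 \|u - \tilde T(u)\|$, from which (ii) follows by the triangle inequality. Property (iii) then follows from the first inequality in \eqref{J2}:
\[
I'(u)[u - K(u)] = I'(u)[u - \tilde T(u)] + I'(u)[\tilde T(u) - K(u)] \ge a\|u-\tilde T(u)\|^p - \|I'(u)\|_* \cdot \tfrac12\|u-\tilde T(u)\|,
\]
and using the second inequality of \eqref{J2} together with (ii) to control $\|I'(u)\|_*$; the subtlety is that the error term is only bounded by $\|u-\tilde T(u)\|^p$ times a power of $(\|u\|+\|\tilde T(u)\|)$, so to get the clean constant $a/2$ one should instead shrink the perturbation radius so that $\|K(u) - \tilde T(u)\| \le \varepsilon(u)\|u-\tilde T(u)\|$ with $\varepsilon(u)$ small enough (depending on a local bound for $\|u\|+\|\tilde T(u)\|$) — this is a routine refinement of the cover, choosing $\rho_u$ smaller on bounded regions.

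Finally, for property (i): the construction must be done inside the cone. Here I would use that $\mathcal C_*$ is closed and convex and that $\tilde T(\mathcal C_*) \subseteq \mathcal C_*$ by Lemma \ref{cononelcono}. Working relative to the cover of $\mathcal C_* \cap W$ (again paracompact in the induced metric), one chooses the centers $u_i \in \mathcal C_* \cap W$, so that $y_i = \tilde T(u_i) \in \mathcal C_*$; then for $u \in \mathcal C_* \cap W$, $K(u) = \sum_i \psi_i(u) y_i$ is a convex combination of elements of $\mathcal C_*$, hence lies in $\mathcal C_*$. To get a single operator defined on all of $W$ satisfying (i) simultaneously, glue this cone-adapted construction to the general one using a locally Lipschitz function that is $1$ near $\mathcal C_*$ — or, more cleanly, perform the partition-of-unity construction on $W$ using centers chosen in $\mathcal C_* \cap W$ whenever the corresponding ball meets $\mathcal C_*$ (possible since $\mathcal C_* \cap W$ is dense in a neighborhood of itself within $W$ only if one is careful; alternatively retract a neighborhood of $\mathcal C_*$ onto it). I expect this last bookkeeping — simultaneously achieving local Lipschitz continuity on all of $W$, the two-sided bound (ii), the descent estimate (iii), and cone-invariance (i) with one and the same $K$ — to be the main obstacle; the resolution is to choose the open cover and its centers once and for all with all four requirements built in, exploiting convexity of $\mathcal C_*$ so that taking convex combinations never leaves the cone.
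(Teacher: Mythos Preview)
Your approach is the paper's: the radius refinement you anticipate for (iii) is made precise by requiring, on each ball of the cover, that $\|\tilde T(v)-\tilde T(w)\| < \delta_2(v) := \dfrac{a}{2b}\,\|v-\tilde T(v)\|^{p-1}\bigl(\|v\|+\|\tilde T(v)\|\bigr)^{2-p}$ for all $v,w$ in the ball, after which the second line of \eqref{J2} bounds the error term $\|I'(u)\|_*\,\|K(u)-\tilde T(u)\|$ by exactly $\tfrac{a}{2}\|u-\tilde T(u)\|^p$, with no further absorption needed. Your suggestion for (i)---choose the center in $\mathcal C_*$ whenever the corresponding set of the locally finite refinement meets $\mathcal C_*$---is exactly what the paper does and requires no gluing or retraction: for $u\in\mathcal C_*\cap W$ every contributing $y_i=\tilde T(u_i)$ then lies in $\mathcal C_*$ by Lemma~\ref{cononelcono}, and convexity of $\mathcal C_*$ finishes.
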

\begin{proof} We follow the arguments in the proofs of \cite[Lemma 4.1]{BL} and \cite[Lemma~2.1]{BartschLiuWeth}. We define the continuous functions $\delta_1,\,\delta_2: W\to \mathbb R$ as
$$
\delta_1(u):=\frac12\|u-\tilde T(u)\|\quad\mbox{and}\quad
\delta_2(u):=\frac{a\|u-\tilde T(u)\|^{p-1}}{2b(\|u\|+\|\tilde T(u)\|)^{p-2}},
$$
where $a,\,b$ are the constants introduced in Proposition \ref{Ttildecompact}.
First we claim that for every $u\in W$, we can find a radius $\varrho(u)>0$ such that for every $v,\,w\in N(u):=\{\phi\in W^{1,p}(B)\,:\,\|\phi-u\|<\varrho(u)\}$ it results that
\begin{equation}\label{mind1d2}
\|\tilde T(v)-\tilde T(w)\|<\min\{\delta_1(v),\,\delta_2(v),\,\delta_1(w),\,\delta_2(w)\}.
\end{equation}
Indeed, suppose by contradiction that for every $n\in\mathbb N$ we can find $v_n,\,w_n\in N_n(u):=\{\phi\in W^{1,p}(B)\,:\,\|\phi-u\|<\frac1n\}$ for which  
\begin{equation}\label{contradictiontt}
\|\tilde T(v_n)-\tilde T(w_n)\|\ge\delta_1(v_n)=\frac12\|v_n-\tilde T(v_n)\|.
\end{equation}
Since $v_n,\,w_n\in N_n(u)$ for every $n$, and by the continuy of $\tilde T$, we get 
$$\lim_{n\to\infty}\|v_n-u\|=\lim_{n\to\infty}\|w_n-u\|=0\quad\mbox{and}\quad\lim_{n\to\infty}\|\tilde T(v_n)-\tilde T(w_n)\|=0.$$
Hence, passing to the limit in \eqref{contradictiontt}, we obtain by the second inequality in \eqref{J2}
$$0\ge\frac12\|u-\tilde T(u)\|\ge\frac{\|I'(u)\|_*}{2b(\|u\|+\|\tilde T(u)\|)^{p-2}}>0,$$
where we have used Remark \ref{TfixI'} and the fact that $u\in W$ implies that $u\not\equiv 0$. This yields a contradiction. Proceeding analogously, one proves that the claim holds. 

Now, let $\mathcal U$ be a locally finite open refinement of $\{N(u)\,:\,u\in W\}$ and let $\{\pi_U\,:\,U\in\mathcal U\}$ be the standard partition of unity subordinated to $\mathcal U$, i.e. 
$$\pi_U(u):=\frac{\alpha_U(u)}{\sum_{V\in\mathcal U}\alpha_V(u)},\qquad\alpha_U(u):=\mathrm{dist}(u,W\setminus U).$$
Clearly, $\sum_{U\in\mathcal U}\pi_U(u)=1$ for any $u\in W$, $\pi_U$ is Lipschitz continuous, it satisfies $\mathrm{supp}(\pi_U)\subseteq U$ and $0\le\pi_U\le1$ for any $U\in\mathcal U$. Furthermore, since $\mathcal U$ is a refinement of $\{N(u)\,:\,u\in W\}$, given any $U\in\mathcal U$, \eqref{mind1d2} holds in particular for any $v,\,w\in U$.
 
For every $U\in\mathcal U$ we choose an element $a_U\in U$ with the property that, if $U\cap\mathcal C_*\neq\emptyset$, then $a_U\in U\cap \mathcal C_*$. We define $K:W\to W^{1,p}(B)$ as
$$K(u):=\sum_{U\in\mathcal U}\pi_U(u)\tilde T(a_U).$$
Therefore, $K$ is locally Lipschitz continuous due to the Lipschitz continuity of any $\pi_U$ and to the local finiteness of the refinement $\mathcal U$. 

Moreover, (i) holds thanks to the facts that $\tilde T$ preserves the cone $\mathcal C_*$ (see Lemma~\ref{cononelcono}), that $K$ is a convex combination of points $\tilde T(a_U)$, and that $\mathcal C_*$ is convex. 

In view of the proof of (ii), by using the properties of the functions $\pi_U$ and \eqref{mind1d2}, we estimate
\begin{equation}\label{K-Ttilde}
\|K(u)-\tilde T(u)\|\le \sum_{\underset{u\in U}{U\in \mathcal U}}\pi_U(u)\|\tilde T(a_U)-\tilde T(u)\|<d_1(u)=\frac12\|u-\tilde T(u)\|.
\end{equation}
This gives immediately  
$$\begin{aligned}
\|u-K(u)\|&\le \|K(u)-\tilde T(u)\|+\|u-\tilde T(u)\|< \frac32\|u-\tilde T(u)\|,\\
\|u-\tilde T(u)\|&\le \|u-K(u)\|+\|K(u)-\tilde T(u)\|\le \|u-K(u)\|+\frac12\|u-\tilde T(u)\|,
\end{aligned}$$
which imply the two inequalities of (ii).

By using the definition of $\delta_2$ and \eqref{J2}, we can finally prove (iii). Indeed, for every $u\in W$
$$
\begin{aligned}
I'(u)[u-K(u)]& \ge I'(u)[u-\tilde T(u)]- \|I'(u)\|_*\|K(u)-\tilde T(u)\|\\
&>a\|u-\tilde T(u)\|^p-b\|u-\tilde T(u)\|(\|u\|+\|\tilde T(u)\|)^{p-2}\delta_2(u)\\
&=\frac{a}2\|u-\tilde T(u)\|^p,
\end{aligned}
$$
and the proof is concluded.
\end{proof}

Without loss of generality, we will take from now on 
\begin{equation}\label{defell}
\ell\in\left(p,\min\left\{\frac{(p-1)^2+p-2}{p-2},p^*\right\}\right),
\end{equation}
where $\ell$ is the subcritical growth of $\tilde f$ defined in Lemma \ref{truncated}. This further condition is needed in the following two lemmas.

\begin{lemma}\label{le:salvezza} For all $c\in\mathbb R$ there exists $C=C(c)>0$ for which 
\begin{equation}\label{salvezza}
\|u\|+\|\tilde T(u)\|\le C\left(1+\|u-\tilde T(u)\|^{\beta}\right),\quad\beta:=\frac{\ell-1}{(p-1)^2-(p-2)(\ell-1)}
\end{equation}
holds for every $u\in W^{1,p}(B)$ with $I(u)\le c$.
\end{lemma}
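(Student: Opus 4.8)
The plan is to exploit the energy bound $I(u)\le c$ to control $\|u\|$ in terms of $\|u-\tilde T(u)\|$, and then use the definition of $\tilde T$ (together with the subcritical growth of $\tilde f$) to bound $\|\tilde T(u)\|$ in turn. First I would set $v:=\tilde T(u)$ and test the equation $(P_v)$ satisfied by $v$ against $u$, i.e. use $I'(u)[u-v]$ and the first inequality in \eqref{J2}, to write
\begin{equation*}
\|v\|^p = \int_B \tilde f(u)\,v\,dx,\qquad I'(u)[u-v]\ge a\|u-v\|^p .
\end{equation*}
Combining these with the explicit form $I'(u)[u-v]=\|u\|^p-\int_B\tilde f(u)u\,dx-\|v\|^p+\int_B\tilde f(u)v\,dx$ (which simplifies since $\|v\|^p=\int_B\tilde f(u)v\,dx$) gives
\begin{equation*}
\|u\|^p - \int_B \tilde f(u)\,u\,dx \ge a\|u-v\|^p .
\end{equation*}
On the other hand, the hypothesis $I(u)\le c$ reads $\frac1p\|u\|^p-\int_B\tilde F(u)\,dx\le c$. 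Using the superquadraticity relation $\tilde f(s)s\ge \mu\tilde F(s)$ for $s\ge R_0$ with $\mu\in(p,\ell]$ (established in the proof of Lemma \ref{PalaisSmale}), one gets, up to the bounded contribution from $\{u\le R_0\}$, an inequality of the form
\begin{equation*}
\left(\frac1\mu-\frac1p\right)\|u\|^p + \frac1\mu\, a\|u-v\|^p \le \frac1\mu\left(\|u\|^p-\int_B\tilde f(u)u\,dx\right)+\|u\|^p\left(\frac1p-\frac1\mu\right)+C \le -I(u)+C\le C(c),
\end{equation*}
which, since $\frac1p-\frac1\mu>0$, yields $\|u\|^p \le C(c)\,(1+\|u-v\|^p)$, hence $\|u\|\le C(1+\|u-v\|)$.

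It remains to bound $\|v\|=\|\tilde T(u)\|$. From $\|v\|^p=\int_B\tilde f(u)v\,dx$, the growth bound \eqref{crescitasottocritica}, Hölder, and the Sobolev embedding $W^{1,p}(B)\hookrightarrow L^\ell(B)$, one estimates
\begin{equation*}
\|v\|^p \le \int_B C(1+|u|^{\ell-1})|v|\,dx \le C\big(1+\|u\|_{L^\ell}^{\ell-1}\big)\|v\|_{L^\ell}\le C\big(1+\|u\|^{\ell-1}\big)\|v\|,
\end{equation*}
so that $\|v\|^{p-1}\le C(1+\|u\|^{\ell-1})$, i.e. $\|v\|\le C(1+\|u\|^{(\ell-1)/(p-1)})$. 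Plugging in $\|u\|\le C(1+\|u-v\|)$ gives $\|v\|\le C(1+\|u-v\|^{(\ell-1)/(p-1)})$. At this point $\|u\|+\|v\|$ is controlled by $1+\|u-v\|^{(\ell-1)/(p-1)}$; however the exponent claimed in \eqref{salvezza} is the smaller $\beta=\frac{\ell-1}{(p-1)^2-(p-2)(\ell-1)}$ (note $(p-1)^2-(p-2)(\ell-1)>0$ precisely by the restriction \eqref{defell} on $\ell$), so a sharper argument is needed.

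The improvement — and the main obstacle — is to avoid the crude bound $\|u\|\le C(1+\|u-v\|)$ for the purpose of estimating $\|v\|$, and instead to bootstrap. The idea is to feed the interpolation-type inequality for $\|v\|$ back into the energy estimate. More precisely, write $w:=u-v$, so $u=v+w$ and $\|u\|\le\|v\|+\|w\|$; insert this into $\|v\|^{p-1}\le C(1+\|u\|^{\ell-1})\le C(1+\|v\|^{\ell-1}+\|w\|^{\ell-1})$. Since $\ell-1>p-1$ this cannot be solved for $\|v\|$ directly, so instead I would use the finer estimate coming from keeping the weight: from $\|v\|^p=\int_B\tilde f(u)v$ and \eqref{crescitasottocritica}, together with the second inequality of \eqref{J2} which controls $\|I'(u)\|_*$ by $\|w\|(\|u\|+\|v\|)^{p-2}$, combined with the already-derived $\|u\|^p\le C(1+\|w\|^p)$; then $(\|u\|+\|v\|)^{p-2}$ appears, and one sets up the algebraic inequality $X:=\|u\|+\|v\|\le C(1+\|w\|\,X^{p-2}+\|w\|^{(\ell-1)/(p-1)})$ — actually the correct route is: bound $\|v\|^{p-1}\le C(1+\|u\|^{\ell-2}\|v\| + \dots)$ no; rather, estimate $\int_B\tilde f(u)v\le C\|u\|_{L^\ell}^{\ell-1}\|v\|_{L^\ell}$ and then use $\|v\|\le C(1+\|u\|)^{?}$ iteratively. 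The cleanest version is: from $\|v\|^p\le C(1+\|u\|^{\ell-1})\|v\|$ and $\|u\|\le\|v\|+\|w\|$, one gets $\|v\|^{p-1}\le C(1+(\|v\|+\|w\|)^{\ell-1})$; treating this as an inequality in $t=\|v\|$, if $\|v\|\le 2\|w\|$ we are done with exponent $1\le\beta$ after absorbing, and otherwise $\|v\|^{p-1}\le C\|v\|^{\ell-1}\cdot$(lower order) is impossible for large $\|v\|$ — hence $\|v\|$ stays bounded by $C(1+\|w\|)$, contradiction with wanting a small exponent. So the genuinely delicate point, which I expect to require the condition \eqref{defell} in an essential way, is to run the correct two-step interpolation: first bound $\int_B\tilde f(u)u\,dx$ in the energy inequality not by a power of $\|u\|$ but by $\|u\|_{L^\ell}^\ell$, use $\|u\|_{L^\ell}\le \|v\|_{L^\ell}+\|w\|_{L^\ell}$, and on $v$ use the equation once more to trade an $L^\ell$ norm of $v$ for $\|w\|$, iterating until the exponents close up at $\beta$. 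I would follow the corresponding computation in \cite[Lemmas 3.7–3.8]{BL} and \cite{BartschLiuWeth}, adapting the exponents; the role of \eqref{defell} is exactly to guarantee $(p-1)^2-(p-2)(\ell-1)>0$ so that this bootstrap terminates with the finite exponent $\beta$ in \eqref{salvezza}.
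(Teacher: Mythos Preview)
Your proposal contains a genuine error at the very first step. The identity you write,
\[
I'(u)[u-v]=\|u\|^p-\int_B\tilde f(u)u\,dx-\|v\|^p+\int_B\tilde f(u)v\,dx,
\]
is false for $p\neq 2$. The correct expression is
\[
I'(u)[u-v]=\|u\|^p-\int_B\tilde f(u)u\,dx-\int_B\bigl(|\nabla u|^{p-2}\nabla u\cdot\nabla v+m|u|^{p-2}uv\bigr)\,dx+\int_B\tilde f(u)v\,dx,
\]
and the cross term is \emph{not} $\|v\|^p$; there is no reason it should cancel against $\int_B\tilde f(u)v\,dx=\|v\|^p$. If your identity held, your subsequent manipulation would give $\bigl(\tfrac1p-\tfrac1\mu\bigr)\|u\|^p+\tfrac{a}{\mu}\|u-v\|^p\le C(c)$, hence $\|u\|\le C(c)$ uniformly. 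But $\{I\le c\}$ is unbounded in $W^{1,p}(B)$ (large positive constants have $I\to-\infty$), so this is impossible.

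There is a second, independent confusion: you assert that $\beta$ is \emph{smaller} than $(\ell-1)/(p-1)$ and therefore try to ``sharpen'' your bound. In fact, since $\ell>p$ one has $(p-1)^2-(p-2)(\ell-1)<p-1$, so $\beta>(\ell-1)/(p-1)$. Thus a bound with exponent $(\ell-1)/(p-1)$ would already be stronger than \eqref{salvezza}; your lengthy bootstrap discussion is aimed at the wrong target and never concludes.

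The paper's route is different and avoids both problems. Starting from $\|u\|^p\le C(1+\|I'(u)\|_*\|u\|)$ (obtained as in Lemma~\ref{PalaisSmale}), it inserts the \emph{second} inequality of \eqref{J2}, i.e.\ $\|I'(u)\|_*\le b\|u-\tilde T(u)\|(\|u\|+\|\tilde T(u)\|)^{p-2}$, and applies Young's inequality to reach the key intermediate estimate
\[
\|u\|\le C\bigl(1+\|u-\tilde T(u)\|^{1/(p-1)}(\|u\|+\|\tilde T(u)\|)^{(p-2)/(p-1)}\bigr).
\]
This is then substituted into the estimate $\|\tilde T(u)\|\le C(1+\|u\|^{(\ell-1)/(p-1)})$ (which you do obtain correctly), producing a bound on $\|\tilde T(u)\|$ in terms of $\|u-\tilde T(u)\|$ and a power of $\|u\|+\|\tilde T(u)\|$. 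A final Young inequality (with conjugate exponents dictated by making the $(\|u\|+\|\tilde T(u)\|)$-factor have exponent~$1$) yields exactly the exponent $\beta$; the condition \eqref{defell} is precisely what makes the required conjugate exponent $(p-1)^2/((\ell-1)(p-2))$ larger than~$1$. Summing the two resulting inequalities and absorbing the small $\varepsilon(\|u\|+\|\tilde T(u)\|)$ terms gives \eqref{salvezza}.
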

\begin{proof} 
Reasoning as in the first part of the proof of Lemma~\ref{PalaisSmale}, we easily get, by the fact that $I(u)\le c$, that  
$$\|u\|^p\le C(1+\|I'(u)\|_*\|u\|),$$
for some positive constant $C$.
Then, by the second inequality of \eqref{J2} and by Young's inequality with exponents $(p',p)$, 
$$\begin{aligned}\|u\|^p&\le C(1+\|u-\tilde T(u)\|(\|u\|+\|\tilde T(u)\|)^{p-2}\|u\|)\\
&\le C(1+\|u-\tilde T(u)\|^{p'}(\|u\|+\|\tilde T(u)\|)^{p-p'}+\varepsilon\|u\|^p),\end{aligned}$$
where $C>0$ may change from line to line, and $\varepsilon>0$ is sufficiently small.
Hence, 
\begin{equation}\label{stimautile}
\|u\|\le C(1+\|u-\tilde T(u)\|^{p'/p}(\|u\|+\|\tilde T(u)\|)^{1-p'/p}).
\end{equation}
Young's inequality with exponents $(p/p',(p-1)/(p-2))$ then gives
\begin{equation}\label{per u}\|u\|\le C[1+\|u-\tilde T(u)\|+\varepsilon(\|u\|+\|\tilde T(u)\|)]\quad\mbox{for some }C>0, \varepsilon>0\mbox{ small.}
\end{equation}

Now, consider the equation satisfied by $v:=\tilde T(u)$ 
$$-\Delta_p v+m|v|^{p-2}v=\tilde f(u)\quad\mbox{in }B.$$ 
By testing it with $v$ and using \eqref{crescitasottocritica}, H\"older's inequality, and the Sobolev embedding, we get
$$\begin{aligned}
\|v\|^p&=\int_B \tilde f(u) vdx\le \int_B C(1+u^{\ell-1})vdx\le C\left(\int_B(1+u^{\ell})dx\right)^{1/\ell'}\left(\int_B v^\ell dx\right)^{1/\ell}\\
&\le C \|v\|_{L^\ell(B)}\left(|B|^{1/\ell'}+\|u\|^{\ell/\ell'}_{L^\ell(B)}\right)\le  C \|v\|(1+\|u\|^{\ell/\ell'}),
\end{aligned}$$  
that is $\|\tilde T(u)\|\le C(1+\|u\|^{(\ell-1)/(p-1)})$. By \eqref{stimautile}, we obtain
$$\|\tilde T(u)\|\le C\left(1+\|u-\tilde T(u)\|^{\frac{\ell-1}{(p-1)^2}}(\|u\|+\|\tilde T(u)\|)^{\frac{(\ell-1)(p-2)}{(p-1)^2}}\right).$$
By applying Young's inequality with exponents $\big(\frac{\ell-1}{(p-1)^2-(p-2)(\ell-1)}, \, \frac{(p-1)^2}{(\ell-1)(p-2)}\big)$, we have
$$\|\tilde T(u)\|\le C\left[1+\|u-\tilde T(u)\|^{\frac{\ell-1}{(p-1)^2-(p-2)(\ell-1)}}+\varepsilon(\|u\|+\|\tilde T(u)\|)\right]$$
for some $C>0$, $\varepsilon>0$ small. Together with \eqref{per u}, this implies the thesis.
\end{proof}

\begin{lemma}\label{conseqPS} Let $c\in\mathbb R$ be such that $I'(u)\neq 0$ for all $u\in \mathcal C_*$ with $I(u)=c$. Then, there exist two positive constants $\bar\varepsilon$ and $\bar\delta$ such that the following inequalities hold  
\begin{itemize}
\item[(i)] $\|I'(u)\|_*\ge\bar\delta$ for all $u\in \mathcal C_*$ with $|I(u)-c|\le 2\bar\varepsilon$;
\item[(ii)] $\|u-K(u)\|\ge\bar\delta$ for all $u\in \mathcal C_*$ with $|I(u)-c|\le 2\bar\varepsilon$.
\end{itemize}
\end{lemma}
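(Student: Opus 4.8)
The plan is to argue by contradiction, establishing the two assertions separately and then taking $\bar\varepsilon$ and $\bar\delta$ to be the smaller of the two pairs of constants produced.

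For $(i)$, I would negate the statement: for every $n\in\mathbb N$ there is $u_n\in\mathcal C_*$ with $|I(u_n)-c|\le 1/n$ and $\|I'(u_n)\|_*<1/n$. By construction $(u_n)$ is a Palais--Smale sequence for $I$ at level $c$, so Lemma~\ref{PalaisSmale} yields a subsequence converging in $W^{1,p}(B)$ to some $u$. Since $\mathcal C_*$ is closed it follows that $u\in\mathcal C_*$, and the continuity of $I$ and of $I'$ forces $I(u)=c$ and $I'(u)=0$, contradicting the hypothesis on $c$. This produces constants $\bar\varepsilon_1,\bar\delta_1>0$ for which $(i)$ holds.

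For $(ii)$, I would first note that, after possibly shrinking $\bar\varepsilon_1$, part $(i)$ already guarantees $I'(u)\neq0$ — hence $\tilde T(u)\neq u$ by Remark~\ref{TfixI'}, so $u\in W$ and $K(u)$ is well defined — for every $u\in\mathcal C_*$ with $|I(u)-c|\le 2\bar\varepsilon_1$. Then I negate $(ii)$: for each $n$ there is $u_n\in\mathcal C_*$ (which, for $n$ large, lies in $W$) with $|I(u_n)-c|\le 1/n$ and $\|u_n-K(u_n)\|<1/n$. The point is to promote this to a genuine Palais--Smale sequence. By Lemma~\ref{KTI}$(ii)$, $\|u_n-\tilde T(u_n)\|\le 2\|u_n-K(u_n)\|\to0$; since $I(u_n)\le c+1$ eventually, Lemma~\ref{le:salvezza} bounds $\|u_n\|+\|\tilde T(u_n)\|$ uniformly in $n$; substituting both facts into the second inequality of \eqref{J2} gives $\|I'(u_n)\|_*\le b\,\|u_n-\tilde T(u_n)\|\,(\|u_n\|+\|\tilde T(u_n)\|)^{p-2}\to0$. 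Thus $(u_n)$ is again a Palais--Smale sequence at level $c$, and the argument of part $(i)$ delivers the contradiction, producing $\bar\varepsilon_2,\bar\delta_2>0$. Finally one sets $\bar\varepsilon:=\min\{\bar\varepsilon_1,\bar\varepsilon_2\}$ and $\bar\delta:=\min\{\bar\delta_1,\bar\delta_2\}$.

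The one step I would be most careful with — and the only genuine difficulty — is the passage from $\|u_n-K(u_n)\|\to0$ to $\|I'(u_n)\|_*\to0$ in part $(ii)$: a priori the factor $\|u_n\|+\|\tilde T(u_n)\|$ in \eqref{J2} could blow up and spoil the conclusion, and it is exactly to prevent this that Lemma~\ref{le:salvezza} (together with the additional restriction \eqref{defell} on $\ell$) was proved. Everything else is a routine exploitation of the Palais--Smale condition and the closedness of $\mathcal C_*$.
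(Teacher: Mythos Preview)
Your proof is correct and, for part (i), identical to the paper's. For part (ii) both you and the paper rely on the same three ingredients --- Lemma~\ref{KTI}(ii), the second inequality in \eqref{J2}, and Lemma~\ref{le:salvezza} --- but chain them in opposite directions. The paper starts from the lower bound $\|I'(u)\|_*\ge\bar\delta$ already obtained in (i), feeds it into \eqref{J2} to get $\|u-\tilde T(u)\|\ge\bar\delta\big/\big(b(\|u\|+\|\tilde T(u)\|)^{p-2}\big)$, substitutes \eqref{salvezza} into the denominator, and then argues (also by contradiction) that the resulting self-referential inequality forces $\|u-\tilde T(u)\|$ to stay bounded away from zero. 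You instead assume a violating sequence, use Lemma~\ref{le:salvezza} to bound $\|u_n\|+\|\tilde T(u_n)\|$, and push through \eqref{J2} in the forward direction to manufacture a Palais--Smale sequence, re-invoking the contradiction of part (i). Your route unifies (i) and (ii) under a single PS mechanism, while the paper's extracts the lower bound on $\|u-\tilde T(u)\|$ directly without a second appeal to compactness; both are equally valid and of comparable length.
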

\begin{proof} (i) The proof follows by Lemma \ref{PalaisSmale}. Indeed, suppose by contradiction that (i) does not hold, then we can find a sequence $(u_n)\subset \mathcal C_*$ such that $\|I'(u_n)\|_*<\frac1n$ and $c-\frac1n \le I(u_n)\le c+\frac1n$ for all $n$. Hence, $(u_n)$ is a Palais-Smale sequence, and  since $I$ satisfies the Palais-Smale condition at level $c$, up to a subsequence, $u_n\to u$ in $W^{1,p}(B)$. Since $(u_n)\subset \mathcal C_*$ and $\mathcal C_*$ is closed, $u\in \mathcal C_*$. The fact that $I$ is of class $C^1$ then gives $I(u_n)\to c=I(u)$ and $I'(u_n)\to 0=I'(u)$, which contradicts the hypothesis.

(ii) Let $$I^{c+2\bar\varepsilon}_{c-2\bar\varepsilon}:= \{u\in \mathcal C_*\,:\,|I(u)-c|\le 2\bar\varepsilon\}.$$ By the part (i),  $I^{c+2\bar\varepsilon}_{c-2\bar\varepsilon}\subset W$, where $W$ is defined in Lemma \ref{KTI}. Hence, for all $u\in I^{c+2\bar\varepsilon}_{c-2\bar\varepsilon}$, $\|u-K(u)\|\ge\frac12\|u-\tilde T(u)\|$ by Lemma \ref{KTI}-(ii).
By the second inequality of \eqref{J2} and by (i), we have for all $u\in I^{c+2\bar\varepsilon}_{c-2\bar\varepsilon}$
$$\|u-\tilde T(u)\|\ge\frac{\bar\delta}{b(\|u\|+\|\tilde T(u)\|)^{p-2}}.$$

This implies by \eqref{salvezza}, that
$$\|u-\tilde T(u)\|\ge \frac{\bar\delta}{bC^{p-2}(1+\|u-\tilde T(u)\|^\beta)^{p-2}},$$
which in turn gives $\|u-\tilde T(u)\|\ge M$ for some positive $M$ and for all $u\in I^{c+2\bar\varepsilon}_{c-2\bar\varepsilon}$. Indeed, if by contradiction we had $\inf\|u-\tilde T(u)\|=0$ over all $u\in I^{c+2\bar\varepsilon}_{c-2\bar\varepsilon}$, we could find a sequence $(u_n)\subset I^{c+2\bar\varepsilon}_{c-2\bar\varepsilon}$ such that $\|u_n-\tilde T(u_n)\|\to 0$, and so by  passing to the limit as $n\to \infty$ in 
$$\|u_n-\tilde T(u_n)\|\ge \frac{\bar\delta}{bC^{p-2}(1+\|u_n-\tilde T(u_n)\|^\beta)^{p-2}},$$
we would have the contradiction $0\ge \bar\delta/(bC^{p-2})>0$, being $\beta>0$ thanks to the choice of $\ell$ in \eqref{defell}. Therefore, for all  $u\in I^{c+2\bar\varepsilon}_{c-2\bar\varepsilon}$,
$\|u-K(u)\|\ge\frac{M}2\ge\min\{\bar\delta, \frac{M}{2}\}$, still denoted by $\bar\delta$, and the proof is concluded. 
\end{proof}

\begin{lemma}[\textbf{Descending flow argument}]\label{deformation} Let $c\in\mathbb R$ be such that $I'(u)\neq 0$ for all $u\in \mathcal C_*$, with $I(u)=c$. Then, there exists a function $\eta:\mathcal C_*\to\mathcal C_*$ satisfying the following properties: 
\begin{itemize}
\item[(i)] $\eta$ is continuous with respect to the topology of $W^{1,p}(B)$;
\item[(ii)] $I(\eta(u))\le I(u)$ for all $u\in\mathcal C_*$;
\item[(iii)] $I(\eta(u))\le c-\bar\varepsilon$ for all $u\in\mathcal C_*$ such that $|I(u)-c|<\bar\varepsilon$;
\item[(iv)] $\eta(u)=u$ for all $u\in\mathcal C_*$ such that $|I(u)-c|>2\bar\varepsilon$,
\end{itemize}
where $\bar\varepsilon$ is the positive constant corresponding to $c$ given in Lemma \ref{conseqPS}.
\end{lemma}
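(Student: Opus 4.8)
The plan is to realize $\eta$ as the time-$T$ map of a descending flow generated by the locally Lipschitz field $u\mapsto u-K(u)$ of Lemma~\ref{KTI}, cut off near the level $c$ and renormalized so as to be globally bounded. First I would fix the constants $\bar\varepsilon,\bar\delta>0$ associated to $c$ by Lemma~\ref{conseqPS}, pick a Lipschitz $\psi:\R\to[0,1]$ with $\psi\equiv1$ on $[c-\bar\varepsilon,c+\bar\varepsilon]$ and $\psi\equiv0$ outside $(c-2\bar\varepsilon,c+2\bar\varepsilon)$, set $\chi:=\psi\circ I$, and consider on $\mathcal C_*$ the field
\[
V(u):=\chi(u)\,\frac{u-K(u)}{\|u-K(u)\|^{p}}\ \text{ if }\chi(u)\neq0,\qquad V(u):=0\ \text{ if }\chi(u)=0.
\]
Whenever $\chi(u)\neq0$ one has $|I(u)-c|<2\bar\varepsilon$, so Lemma~\ref{conseqPS}(i) and Remark~\ref{TfixI'} give $u\in W$ (hence $K(u)$ is defined), while Lemma~\ref{conseqPS}(ii) gives $\|u-K(u)\|\geq\bar\delta$; since $p>2$, this makes $V$ well defined, locally Lipschitz on $\mathcal C_*$, and bounded there by $\bar\delta^{1-p}$.

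Next I would solve the Cauchy problem $\dot\sigma(t,u)=-V(\sigma(t,u))$, $\sigma(0,u)=u$, for $u\in\mathcal C_*$. Local existence and uniqueness follow from Cauchy--Lipschitz in $W^{1,p}(B)$, and boundedness of $V$ promotes this to a global flow $\sigma:[0,\infty)\times\mathcal C_*\to W^{1,p}(B)$, jointly continuous in $(t,u)$ --- this will give property (i). The genuinely delicate point, flagged already in the introduction, is that the flow must preserve $\mathcal C_*$; I would obtain this by a subtangentiality/Euler-scheme argument. For $v\in\mathcal C_*$ and $h>0$ small one has
\[
v-hV(v)=\bigl(1-h\lambda(v)\bigr)v+h\lambda(v)\,K(v),\qquad\lambda(v):=\chi(v)\,\|v-K(v)\|^{-p}\geq0,
\]
which, whenever $\chi(v)\neq0$, is a convex combination of $v\in\mathcal C_*$ and $K(v)\in\mathcal C_*$ (the latter by Lemma~\ref{KTI}(i), since then $v\in W\cap\mathcal C_*$), because $\lambda$ is bounded so that $h\lambda(v)\leq1$ for $h$ small; if $\chi(v)=0$ the expression equals $v$. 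As $\mathcal C_*$ is closed and convex (see \eqref{Cstar}), the polygonal Euler approximations of $\sigma(\cdot,u)$ stay in $\mathcal C_*$, and their convergence to $\sigma(\cdot,u)$ forces $\sigma(t,u)\in\mathcal C_*$ for all $t\geq0$. This is precisely where it matters that the field comes from $u-K(u)$, not from a raw pseudo-gradient of $I$: each infinitesimal step points towards $K(v)\in\mathcal C_*$.

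For the energy I would differentiate $t\mapsto I(\sigma(t,u))$ (legitimate since $I\in C^2$) and combine Lemma~\ref{KTI}(iii) with Lemma~\ref{KTI}(ii): wherever $\chi(\sigma)\neq0$,
\[
\frac{d}{dt}I(\sigma(t,u))=-\chi(\sigma)\,\frac{I'(\sigma)[\sigma-K(\sigma)]}{\|\sigma-K(\sigma)\|^{p}}\leq-\chi(\sigma)\,\frac{a}{2}\,\frac{\|\sigma-\tilde T(\sigma)\|^{p}}{\|\sigma-K(\sigma)\|^{p}}\leq-\chi(\sigma)\,\frac{a}{2^{p+1}},
\]
with $a$ the constant of Proposition~\ref{Ttildecompact}; the inequality holds trivially (with equality) where $\chi(\sigma)=0$. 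Hence $t\mapsto I(\sigma(t,u))$ is nonincreasing, which is (ii).

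It then remains to set $\eta(u):=\sigma(T,u)$ with $T:=2^{p+2}\bar\varepsilon/a$. For (iv), if $|I(u)-c|>2\bar\varepsilon$ then $\chi(u)=0$, so $V(u)=0$ and, by uniqueness, $\sigma(\cdot,u)\equiv u$, whence $\eta(u)=u$. For (iii), take $u\in\mathcal C_*$ with $|I(u)-c|<\bar\varepsilon$: if $I(\sigma(t_0,u))\leq c-\bar\varepsilon$ for some $t_0\leq T$ we are done by monotonicity; otherwise $I(\sigma(t,u))\in(c-\bar\varepsilon,c+\bar\varepsilon)$ for all $t\in[0,T]$ (using also $I(\sigma(t,u))\le I(u)$), so $\chi(\sigma(\cdot,u))\equiv1$ and integrating the inequality above yields $I(\eta(u))\leq I(u)-2\bar\varepsilon<c-\bar\varepsilon$, contradicting the case assumption; thus the first alternative always occurs. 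I expect the invariance of $\mathcal C_*$ to be the one nonroutine ingredient; the rest is the standard deformation scheme, with the quantitative Palais--Smale input supplied by Lemma~\ref{conseqPS} and the normalization by $\|u-K(u)\|^{p}$ (harmless since $p>2$ and $\|u-K(u)\|\geq\bar\delta$ on the strip) making the flow bounded and global.
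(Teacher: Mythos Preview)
Your approach is the same as the paper's: build a descending flow from the field $u-K(u)$, cut off near the level $c$ and normalized, and use an explicit Euler scheme together with convexity of $\mathcal C_*$ and Lemma~\ref{KTI}(i) to show the flow preserves $\mathcal C_*$. The quantitative energy decrease via Lemma~\ref{KTI}(ii)--(iii) and the choice of $T$ match as well (the paper normalizes by $\|u-K(u)\|$ rather than $\|u-K(u)\|^{p}$, which leads to $T=2^{p+2}\bar\varepsilon/(a\bar\delta^{p-1})$, but this is purely cosmetic).

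There is one technical gap. You only establish that $V$ is locally Lipschitz \emph{on $\mathcal C_*$}, but then invoke Cauchy--Lipschitz ``in $W^{1,p}(B)$''. Off $\mathcal C_*$ there may well be critical points of $I$ with $|I(u)-c|<2\bar\varepsilon$ (Lemma~\ref{conseqPS} says nothing about them), and near such points $\|u-K(u)\|$ can be arbitrarily small while $\chi(u)\neq0$, so your $V$ is undefined or blows up. The paper resolves this by inserting a \emph{second} cutoff $\chi_2$, depending on $\|u-K(u)\|$ (equal to $1$ where $\|u-K(u)\|\ge\bar\delta$ and $0$ where $\|u-K(u)\|\le\bar\delta/2$); this makes the vector field globally defined and locally Lipschitz on all of $W^{1,p}(B)$, so Cauchy--Lipschitz applies directly and the Euler construction is used only for cone invariance. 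Alternatively, since your Euler polygonals stay in $\mathcal C_*$ and $V$ is bounded and Lipschitz there, you could use them to \emph{construct} the solution itself, not just to verify invariance; the paper in fact writes out this convergence estimate in detail. Either fix is routine; once made, your argument goes through.
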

\begin{proof} Let $\chi_1:\mathbb R\to [0,1]$ and $\chi_2: W^{1,p}(B)\to [0,1]$ be two smooth cut-off functions such that 
$$\chi_1(t)=
\begin{cases}1\quad&\mbox{if }|t-c|<\bar\varepsilon,\\
0&\mbox{if }|t-c|>2\bar\varepsilon,
\end{cases}
\qquad
\chi_2(u)=
\begin{cases}
1\quad&\mbox{if }\|u-K(u)\|\ge\bar\delta,\\
0&\mbox{if }\|u-K(u)\|\le\frac{\bar\delta}2,
\end{cases}
$$
where $\bar\delta$ and $\bar\varepsilon$ are given in Lemma \ref{conseqPS}.
Recalling the definition of $K$ in Lemma \ref{KTI}, let $\Phi: W^{1,p}(B)\to W^{1,p}(B)$ be the map defined by
$$\Phi(u):=\begin{cases}\chi_1(I(u))\chi_2(u)\frac{u-K(u)}{\|u-K(u)\|}\quad&\mbox{if }|I(u)-c|\le 2\bar\varepsilon,\\
0&\mbox{otherwise.}\end{cases}$$
Note that the definition of $\Phi$ is well posed by Lemma \ref{conseqPS}. For all $u\in\mathcal C_*$, we consider the Cauchy problem 
\begin{equation}\label{CauchyProblem}
\begin{cases}\frac{d}{dt}\eta(t,u(x))=-\Phi(\eta(t,u(x)))\quad&(t,x)\in(0,\infty)\times B,\\
\partial_\nu\eta(t,u(x))=0&(t,x)\in(0,\infty)\times\partial B,\\
\eta(0,u(x))=u(x) & x\in B.
 \end{cases}
\end{equation}
Being $K$ locally Lipschitz continuous by Lemma \ref{KTI}, for all $u\in\mathcal C_*$ there exists a unique solution $\eta(\cdot, u)\in C^1([0,\infty);W^{1,p}(B))$. 

We shall prove that for all $t>0$, $\eta(t,\mathcal C_*)\subset \mathcal C_*$. 
Fix $T>0$. For every $u\in\mathcal C_*$ and $n\in\N$ with $n\ge T/\bar\delta$, let
$$\begin{cases}\bar\eta_n(0,u):=u,\\
\bar\eta_n\left(t_{i+1},u\right):=\bar\eta_n\left(t_i,u\right)-\frac{T}n\Phi\left(\bar\eta_n\left(t_i,u\right)\right)\quad\mbox{for all }i=0,\dots,n-1,
\end{cases}
$$
with $$t_i:=i\cdot\frac{T}n\quad\mbox{for all }i=0,\dots,n.$$
Let us prove that for all $i=0,\dots,n-1$, $\bar\eta_n\left(t_{i+1},u\right)\in\mathcal C_*$. If $|I(u)-c|>2\bar\varepsilon$, then $\bar\eta_n\left(t_{i+1},u\right)=u\in\mathcal C_*$ for every $i=0,\dots,n-1$. Otherwise, let 
$$\lambda:=\frac{T}n\cdot\frac{\chi_1\left(I\left(\bar\eta_n\left(t_i,u\right)\right)\right)\chi_2\left(\bar\eta_n\left(t_i,u\right)\right)}{\|\bar\eta_n\left(t_i,u\right)-K\left(\bar\eta_n\left(t_i,u\right)\right)\|}.$$
Clearly, $\lambda\le1$ by Lemma \ref{conseqPS}-(ii), being $n\ge T/\bar\delta$. Therefore, it results for every $i=0,\dots,n-1$
$$\bar\eta_n\left(t_{i+1},u\right)=(1-\lambda)\bar\eta_n\left(t_i,u\right)+\lambda K\left(\bar\eta_n\left(t_i,u\right)\right)\in \mathcal C_*$$
by induction on $i$, by Lemma \ref{KTI}-(i), and by the convexity of $\mathcal C_*$. 
For every $i=0,\dots,n-1$, we can now define the line segment
$$\eta_n^{(i)}(t,u):= \left(1-\frac{t}Tn+i\right)\bar\eta_n\left(t_i,u\right)+\left(\frac{t}Tn-i\right)\bar\eta_n\left(t_{i+1},u\right)$$
for all $t\in\left[t_i,t_{i+1}\right]$.
We denote by $\eta_n:=\bigcup_{i=0}^{n-1}\eta^{(i)}_n$ the whole Euler polygonal defined in $[0,T]$. Being $\mathcal C_*$ convex, we get immediately that for all $t\in[0,T]$, $\eta_n(t,u)\in\mathcal C_*$. 

We claim that $\eta_n(\cdot,u)$ converges to the solution $\eta(\cdot,u)$ of the Cauchy problem \eqref{CauchyProblem} in $W^{1,p}(B)$.
Indeed, for all $i=0,\dots,n-1$, we integrate by parts the equation of \eqref{CauchyProblem} in the interval $[t_i,t_{i+1}]$ and we obtain
$$\eta(t_i+1,u)=\eta(t_i,u)-\frac{T}n\Phi(\eta(t_i,u))+\int_{t_i}^{t_{i+1}}(s-t_{i+1})\frac{d}{ds}\Phi(\eta(s,u))ds.$$
On the other hand, we define the error
$$\varepsilon_i:=\|\eta(t_i,u)-\eta_n(t_{i},u)\|\quad\mbox{for every }i=0,\dots,n.$$
Hence, for every $i=0,\dots,n-1$, we get 
\begin{equation}\label{error-estimate}
\varepsilon_{i+1}\le\varepsilon_i+\frac{T}n\|\Phi(\eta(t_i,u))-\Phi(\eta_n(t_i,u))\|+\left\|\int_{t_i}^{t_{i+1}}(t_{i+1}-s)\frac{d}{ds}\Phi(\eta(s,u))ds\right\|.
\end{equation}
Now, since $\Phi$ is locally Lipschitz and $\eta([0,T])\subset W^{1,p}(B)$ is compact, 
\begin{equation}\label{ineqonPhi}
\|\Phi(\eta(t_i,u))-\Phi(\eta_n(t_i,u))\|\le\varepsilon_i L_\Phi
\end{equation}
for some $L_\Phi=L_\Phi(\eta([0,T]))>0$. 
Furthermore, 
$$\begin{aligned}\left\|\int_{t_i}^{t_{i+1}}(t_{i+1}-s)\frac{d}{ds}\Phi(\eta(s,u))ds\right\|&\le\int_{t_i}^{t_{i+1}}(t_{i+1}-s)\left\|\frac{d}{ds}\Phi(\eta(s,u))\right\|ds\\
&\le\frac{T}{n}\int_0^T\|\Phi'(\eta(s,u))\|_*\|\Phi(\eta(s,u))\|ds\\
&\le\frac{T^2}{n}\sup_{s\in[0,T]}\|\Phi'(\eta(s,u))\|_*= \frac{T^2}{n}L_\Phi.
\end{aligned}$$
Thus, combining the last inequality with \eqref{ineqonPhi} and \eqref{error-estimate}, we have
$$\varepsilon_{i+1}\le\varepsilon_i+\frac{T}n\varepsilon_i L_\Phi+\frac{T^2}{n}L_\Phi\quad\mbox{for all }i=0,\dots,n-1.$$
This implies that 
$$\varepsilon_{i+1}\le \frac{T^2}n L_\Phi\sum_{j=0}^i\left(1+\frac{T}n L_\Phi\right)^j=T\left[\left(1+\frac{T}n L_\Phi\right)^{i+1}-1\right]\to 0\quad\mbox{as }n\to\infty,$$
where we have used the fact that $\varepsilon_0=0$. By the triangle inequality and the continuity of $\eta(\cdot,u)$ and $\eta_n(\cdot,u)$, this yields the claim. 

Hence, for all $t\in[0,T]$, $\eta(t,u)\in\mathcal C_*$ by the closedness of $\mathcal C_*$.   

For all $u\in\mathcal C_*$ and $t>0$ we can write
\begin{equation}\label{eq:flusso_decrescente}
\begin{aligned}I(\eta(t,u))-I(u)&=\int_0^t\frac{d}{ds}I(\eta(s,u))ds\\
&\hspace{-2.5cm}=-\int_0^t\frac{\chi_1(I(\eta(s,u)))\chi_2(\eta(s,u))}{\|\eta(s,u)-K(\eta(s,u))\|}I'(\eta(s,u))[\eta(s,u)-K(\eta(s,u))]ds\\
&\hspace{-2.5cm}
\le-\frac{a}2\displaystyle{\int_0^t\dfrac{\|\eta(s,u)-\tilde T(\eta(s,u))\|^{p}}{\|\eta(s,u)-K(\eta(s,u))\|}\chi_1(I(\eta(s,u)))\chi_2(\eta(s,u))ds}\le0,
\end{aligned}
\end{equation}
where we have used the inequality in Lemma \ref{KTI}-(iii).

Now, let $u\in\mathcal C_*$ be such that $|I(u)-c|<\bar\varepsilon$ and let $t\ge 2^{p+2}\bar\varepsilon/(a\bar\delta^{p-1})$. Then, two cases arise: either there exists $s\in[0,t]$ for which $I(\eta(s,u))\le c-\bar\varepsilon$ and so, by the previous calculation we get immediately that $I(\eta(t,u))\le c-\bar\varepsilon$, or for all $s\in[0,t]$, $I(\eta(s,u))> c-\bar\varepsilon$. In this second case, 
$$c-\bar\varepsilon< I(\eta(s,u))\le I(u)< c+\bar\varepsilon.$$
In particular, by Lemma \ref{conseqPS}-(i), $\eta(s,u)\in W$, by the definitions of $\chi_1$ and $\chi_2$, and by Lemma~\ref{conseqPS}-(ii), it results that for all $s\in[0,t]$ 
$$\chi_1(I(\eta(s,u)))=1,\qquad\|\eta(s,u)-K(\eta(s,u))\|\ge\bar\delta,\quad\mbox{and}\quad\chi_2(\eta(s,u))=1.$$
Hence, by \eqref{eq:flusso_decrescente} and Lemma \ref{KTI}-(ii) and (iii), we obtain
$$I(\eta(t,u))\le
I(u)-\displaystyle\int_0^t\frac{a}{2^{p+1}}\bar\delta^{p-1}ds
\le c+\bar\varepsilon-\frac{a}{2^{p+1}}\bar\delta^{p-1}t\le c-\bar\varepsilon.
$$

Finally, if we define with abuse of notation $$\eta(u):=\eta\left(\frac{2^{p+2}\bar\varepsilon}{a\bar\delta^{p-1}},u\right),$$ it is immediate to verify that $\eta$ satisfies (i)-(iv).
\end{proof}

\begin{lemma}[\textbf{Mountain pass geometry}]
\label{sec:case-multiple-fixed-2}
Let $\tau>0$ be such that $\tau <\min \{u_0-u_-,u_+-u_0\}$. 
Then there exists $\alpha>0$ such that 
\begin{itemize}
\item[(i)] $I(u)\ge I(u_-)+\alpha$ for every $u \in
\mathcal C_*$ with $\|u-u_-\|_{L^\infty(B)}=\tau$;
\item[(ii)] if $u_+< \infty$, then $I(u)\ge I(u_+)+\alpha$ for every $u \in
\mathcal C_*$ with $\|u-u_+\|_{L^\infty(B)}= \tau$.
\end{itemize}
\end{lemma}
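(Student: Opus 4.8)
The plan is to write the energy \eqref{I} as
$$I(u)=\frac1p\|\nabla u\|_{L^p(B)}^p+\int_B\psi(u)\,dx,\qquad \psi(t):=\frac{m}{p}t^p-\tilde F(t),$$
so that $\psi\in C^2([0,\infty))$ with $\psi'(t)=mt^{p-1}-\tilde f(t)$. The first step is to read off the shape of $\psi$ from $(f_3)$: by Lemma~\ref{truncated} the function $h(t):=\tilde f(t)-mt^{p-1}$ vanishes at $u_0$ with $h'(u_0)>0$, hence $h<0$ just to the left and $h>0$ just to the right of $u_0$; since by \eqref{u-u+} the function $h$ has no zero in $(u_-,u_0)$ nor in $(u_0,u_+)$, we get $h<0$ on $(u_-,u_0)$ and $h>0$ on $(u_0,u_+)$. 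Equivalently $\psi'=-h$ is positive on $(u_-,u_0)$ and negative on $(u_0,u_+)$, i.e.\ $\psi$ is strictly increasing on $[u_-,u_0]$ and strictly decreasing on $[u_0,u_+]$; in particular $\gamma_-(\varrho):=\psi(u_-+\varrho)-\psi(u_-)>0$ and $\gamma_+(\varrho):=\psi(u_+-\varrho)-\psi(u_+)>0$ for every small $\varrho>0$.

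Next I would exploit membership in $\mathcal C_*$. If $u\in\mathcal C_*$ with $\|u-u_-\|_{L^\infty(B)}=\tau$, then $u$ radially nondecreasing and $u_-\le u\le u_+$ give $\|u-u_-\|_{L^\infty(B)}=u(1)-u_-$, so $u$ takes its values in $[u_-,u_-+\tau]\subset[u_-,u_0)$ (here $\tau<u_0-u_-$ is used). By the first step $\psi(u(x))\ge\psi(u_-)$ a.e., hence
$$I(u)-I(u_-)=\frac1p\|\nabla u\|_{L^p(B)}^p+\int_B\big(\psi(u)-\psi(u_-)\big)\,dx\ \ge\ 0,$$
and it only remains to bound this below by a constant $\alpha>0$ independent of $u$. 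I would do so by a dichotomy based on a fixed $\varepsilon_0\in(0,\tau)$: either $u(1/2)\ge u_-+\varepsilon_0$, in which case monotonicity forces $u\in[u_-+\varepsilon_0,u_0)$ on $B\setminus B_{1/2}$ and the potential term alone gives $I(u)-I(u_-)\ge\gamma_-(\varepsilon_0)\,|B\setminus B_{1/2}|$; or $u(1/2)<u_-+\varepsilon_0$, in which case $\int_{1/2}^1u'\,dr=u(1)-u(1/2)>\tau-\varepsilon_0$ and, since $r^{N-1}\ge2^{-(N-1)}$ on $[1/2,1]$, the power--mean inequality on $[1/2,1]$ yields $\|\nabla u\|_{L^p(B)}^p\ge|\partial B|\,2^{p-N}(\tau-\varepsilon_0)^p$, hence $I(u)-I(u_-)\ge|\partial B|\,2^{p-N}(\tau-\varepsilon_0)^p/p$. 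Taking $\alpha$ to be the smaller of the two resulting positive constants proves (i).

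Part (ii) is the mirror statement: for $u\in\mathcal C_*$ with $\|u-u_+\|_{L^\infty(B)}=\tau$ one has $\|u-u_+\|_{L^\infty(B)}=u_+-u(0)$, so $u$ takes values in $[u_+-\tau,u_+]\subset(u_0,u_+]$ (using $\tau<u_+-u_0$), $\psi(u)\ge\psi(u_+)$, and $I(u)-I(u_+)=\frac1p\|\nabla u\|_{L^p(B)}^p+\int_B(\psi(u)-\psi(u_+))\,dx\ge0$; one runs the same dichotomy with the roles of centre and boundary interchanged (Case~A now using the ball $B_{1/2}$). \textbf{The main obstacle is the gradient lower bound in the second alternative of part~(ii)}: there the rise of $u$ away from $u(0)=u_+-\tau$ is localized near the origin, where the radial weight $r^{N-1}$ degenerates and the power--mean inequality on $[0,1/2]$ is no longer available. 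One has to replace it by Hölder with the weight,
$$\tau-\varepsilon_0<\int_0^{1/2}u'\,dr\le\left(\int_0^{1/2}u'(r)^p r^{N-1}\,dr\right)^{1/p}\left(\int_0^{1/2}r^{-\frac{N-1}{p-1}}\,dr\right)^{1/p'},$$
so that $\|\nabla u\|_{L^p(B)}^p\ge|\partial B|\int_0^{1/2}u'^p r^{N-1}\,dr$ is bounded below by a positive multiple of $(\tau-\varepsilon_0)^p$ once the last factor is finite, i.e.\ once $(N-1)/(p-1)<1$; since $p>2$ this is automatic for $N\le2$, and in general it is exactly the point where the interplay between $N$, $p$ and the radial geometry enters. (When $p>N$ one may alternatively bypass the dichotomy: a sequence $(u_n)\subset\mathcal C_*$ on either sphere along which $I(u_n)\to I(u_\mp)$ would have $\|\nabla u_n\|_p\to0$ and $\int_B(\psi(u_n)-\psi(u_\mp))\to0$, be bounded in $W^{1,p}(B)$, converge uniformly by the compact embedding $W^{1,p}(B)\hookrightarrow C(\bar B)$ to a constant $c$ with $\psi(c)=\psi(u_\mp)$, forcing $c=u_\mp$ by strict monotonicity of $\psi$ and contradicting $\|u_n-u_\mp\|_{L^\infty(B)}=\tau$.)
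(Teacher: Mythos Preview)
Your direct argument for part~(i) is correct and differs from the paper's route. The paper argues by contradiction: writing $u=u_-+w_n$ with $w_n\in\mathcal C$, $w_n(1)=\tau$, one shows that if $I(u_-+w_n)\to I(u_-)$ then $\|\nabla w_n\|_{L^p}\to0$; since the constraint $w_n(1)=\tau$ lives on $\partial B$, this forces $w_n\to\tau$ in $W^{1,p}(B)$, and the potential integral then converges to a strictly positive number --- a contradiction. Your dichotomy on $u(1/2)$ reaches the same conclusion constructively, extracting either a potential contribution on the outer annulus or a gradient contribution on $[1/2,1]$. Both arguments succeed because the $L^\infty$ constraint is saturated at $r=1$, where the radial weight $r^{N-1}$ does not degenerate.

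Your caution on part~(ii) is well placed, and the obstacle you isolate is genuine rather than merely technical. The paper dispatches (ii) with ``in a similar way'', but the symmetry is broken exactly where you say: now the constraint is saturated at the origin, $u(0)=u_+-\tau$, and when $p\le N$ the $W^{1,p}$ norm carries no information about the value at $0$. Concretely, for $2<p<N$ take $u_n\in\mathcal C_*$ equal to $u_+-\tau$ on $B_{1/n}$, equal to $u_+$ on $B\setminus B_{2/n}$, and affine in $r$ in between; then $\|u_n-u_+\|_{L^\infty(B)}=\tau$, yet $\|\nabla u_n\|_{L^p(B)}^p\le C\,n^{p-N}\to0$ and $\int_B(\psi(u_n)-\psi(u_+))\,dx\le(\psi(u_+-\tau)-\psi(u_+))\,|B_{2/n}|\to0$, so $I(u_n)\to I(u_+)$ and no uniform $\alpha>0$ can exist. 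Thus your restriction to $p>N$ is sharp: both your H\"older bound and your parenthetical compactness argument (via the embedding $W^{1,p}(B)\hookrightarrow C(\bar B)$) are valid precisely there, and the paper's ``similar'' proof carries the same hidden limitation in the regime $p\le N$ with $u_+<\infty$.
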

\begin{proof} 
Suppose by contradiction that there exists a sequence $(w_n)_n 
\subset \mathcal C_*$ such that
\begin{equation}\label{wnbounded}
\|w_n\|_{L^\infty(B)}=w_n(1)=\tau>0\quad\mbox{for all }n
\end{equation}
and $\limsup \limits_{n \to \infty}
\bigl[I(u_-+w_n)-I(u_-)\bigr] \le 0$. Since
$$\begin{aligned}&\frac1p\int_B((u_-+w_n)^p-u_-^p)dx=\int_B\int_0^1(u_-+t w_n)^{p-1}w_n\,dtdx,\\
&\tilde F(u_-+w_n)-\tilde F(u_-)=\int_0^1\tilde f(u_-+tw_n)w_ndt,\end{aligned}$$
we get
\begin{align*}
I(u_-&+w_n)-I(u_-)\\ &= \frac{1}p\int_B\left(|\nabla w_n|^p+ m(u_-+w_n)^p -mu_-^p\right)\,dx 
 - \int_B \Bigl(\tilde F(u_- + w_n)-\tilde F(u_-))\,dx\\
&= \frac{1}p\int_B|\nabla w_n|^p\,dx + 
\int_B \int_0^1 \Bigl(m(u_-+t w_n)^{p-1} -  \tilde f(u_-+t w_n)\Bigr)w_n\,dt dx. 
\end{align*} 
Therefore, since by $(f_3)$ and the definition of $u_-$ 
\begin{equation}\label{fmp}
ms^{p-1}-\tilde f(s)>0 \qquad \text{for $s \in (u_-,u_0)$,}  
\end{equation}
we conclude that $\|\nabla w_n\|_{L^p}(B) \to 0$ and that $|\nabla w_n|\to0$ a.e. in $B$ up to a subsequence. Together with \eqref{wnbounded}, this ensures that $(w_n)$ is bounded in $W^{1,p}(B)$ and so, up to a subsequence, it is weakly convergent to some $w\in W^{1,p}(B)$. In particular, 
$$\lim_{n\to\infty}\int_B|\nabla (w_n-w)|^{p-2}\nabla (w_n-w)\cdot\nabla wdx=0.$$
By the Dominated Convergence Theorem, we now get that $\nabla w=0$ a.e. in $B$ and so the sequence
$(w_n)$ converges to the constant solution $w \equiv \tau$ in the
$W^{1,p}$-norm. Again by the Dominated Convergence Theorem we can conclude that 
\begin{align*}
0 &= \lim_{n \to
  \infty} \int_B \int_0^1 \Bigl(m(u_-+t w_n)^{p-1} -  \tilde f(u_-+t
w_n)\Bigr)w_n\,dtdx\\
&=  \int_B \int_0^1 \Bigl(m(u_-+t \tau)^{p-1} -  \tilde f(u_-+t
\tau)\Bigr)\tau\,dt dx,
\end{align*}
which contradicts \eqref{fmp}. Hence there exists $\alpha_1>0$ such that (i) holds. 

In a similar way, now using the fact that $ms^{p-1}-\tilde f(s)<0$ for $s \in (u_0,u_+)$, we find $\alpha_{2}>0$
such that (ii) holds if $u_+< \infty$. The claim then follows with $\alpha:= \min \{\alpha_1,\alpha_2\}$. 
\end{proof}
\smallskip

Let
\begin{equation}\label{eq:2}
\begin{aligned} U_- &:= \left\{u \in \mathcal C_* \::\: I(u)<I(u_-)+\frac{\alpha}{2},\:
\|u-u_- \|_{L^\infty(B)} < \tau\right\},\\
\\
U_+&:=\begin{cases}
\displaystyle{\left\{u \in \mathcal C_* \::\: I(u)<I(u_+)+\frac{\alpha}{2},\:
\|u-u_+ \|_{L^\infty(B)} < \tau\right\}},&\mbox{ if }u_+<\infty,\\
&\\
\left\{u \in \mathcal C_* \, :\, I(u)< I(u_-),\, \|u-u_-\|_{L^\infty(B)}>\tau\right\},&\mbox{ if }u_+=\infty
\end{cases}
\end{aligned}
\end{equation}
where $\tau$ and $\alpha$ are given by Lemma \ref{sec:case-multiple-fixed-2}, 
$$
\Gamma:=\left\{ \gamma\in C([0,1];\mathcal C_*)\ :\  \gamma(0) \in U_-,\:
  \gamma(1) \in U_+\right\},
$$
and
\begin{equation}\label{minmax}
c:=\inf_{\gamma\in\Gamma}\max_{t\in[0,1]} I(\gamma(t)).
\end{equation}

\begin{proposition}[\textbf{Mountain Pass Theorem}]\label{mountainpass} The value $c$ defined in \eqref{minmax} is finite and there exists a
critical point $u\in\mathcal C_*\setminus\{u_-,u_+\}$ of $I$ with $I(u)=c$. In particular, $u$ is a weak solution of \eqref{Pg}.
\end{proposition}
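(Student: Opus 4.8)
The plan is to verify the three ingredients of a mountain-pass argument adapted to the cone $\mathcal C_*$: (a) the classes $U_-$ and $U_+$ are non-empty and separated in the sense that $I(u_-)+\frac\alpha2$ sits strictly below the min-max level $c$, so every admissible path must cross a ``collar'' around $u_-$ on which $I\ge I(u_-)+\alpha$; (b) $c<\infty$; and (c) the descending flow $\eta$ of Lemma~\ref{deformation} produces a Palais-Smale sequence at level $c$ inside $\mathcal C_*$, whence a critical point. First I would check that $u_-\in U_-$ and $u_+\in U_+$ (when $u_+<\infty$), so both sets are non-empty; when $u_+=\infty$ one shows instead that any sufficiently large constant function lies in $U_+$, using $(f_2)$ together with \eqref{Mdelta} to make the energy of large constants go to $-\infty$ (here $m s^{p-1}-\tilde f(s)<0$ for $s>u_0$, so $I$ of the constant $s$ is decreasing in $s$ beyond $u_+$). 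To see $c<\infty$ one exhibits a single admissible path: for $u_+<\infty$ take $\gamma(t)=$ the affine segment $(1-t)u_-+tu_+$, which lies in $\mathcal C_*$ by convexity and on which $I$ is a continuous function of $t$, hence bounded; for $u_+=\infty$ one concatenates the segment from $u_-$ to a large constant with an endpoint in $U_+$.

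Next I would show $c\ge I(u_-)+\alpha$. This is where Lemma~\ref{sec:case-multiple-fixed-2} enters: every $\gamma\in\Gamma$ starts in $U_-$ (where $\|\gamma(0)-u_-\|_{L^\infty}<\tau$) and, I claim, must reach a point with $\|\gamma(t)-u_-\|_{L^\infty}=\tau$. For $u_+<\infty$ this follows because $\gamma(1)\in U_+$ forces $\|\gamma(1)-u_-\|_{L^\infty}\ge u_+-u_-\ge(u_+-u_0)+(u_0-u_-)>2\tau$ — wait, more carefully $\|\gamma(1)-u_-\|_{L^\infty}\ge |u_+-u_-|-\tau>\tau$ by the choice $\tau<\min\{u_0-u_-,u_+-u_0\}$ — and $t\mapsto\|\gamma(t)-u_-\|_{L^\infty}$ is continuous, so by the intermediate value theorem it equals $\tau$ somewhere; for $u_+=\infty$ the definition of $U_+$ already puts $\gamma(1)$ at $L^\infty$-distance $>\tau$ from $u_-$. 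At that crossing point $t_0$, Lemma~\ref{sec:case-multiple-fixed-2}(i) gives $I(\gamma(t_0))\ge I(u_-)+\alpha$, so $\max_t I(\gamma(t))\ge I(u_-)+\alpha$ for every $\gamma$, hence $c\ge I(u_-)+\alpha>I(u_-)+\frac\alpha2$. In particular no $u\in U_-$ has $I(u)\ge c$, and symmetrically (using (ii) when $u_+<\infty$, and the definition of $U_+$ when $u_+=\infty$) no $u\in U_+$ has $I(u)\ge c$.

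Finally I would run the deformation argument to extract a critical point. Suppose, for contradiction, that $I'(u)\ne0$ for all $u\in\mathcal C_*$ with $I(u)=c$. Then Lemma~\ref{deformation} furnishes $\eta:\mathcal C_*\to\mathcal C_*$ with properties (i)-(iv). Pick $\gamma\in\Gamma$ with $\max_t I(\gamma(t))<c+\bar\varepsilon$, and set $\tilde\gamma:=\eta\circ\gamma$. Property (i) gives continuity into $\mathcal C_*$, so $\tilde\gamma\in C([0,1];\mathcal C_*)$; since $\gamma(0)\in U_-$ and $\gamma(1)\in U_+$ have energy well below $c-\bar\varepsilon$ (after shrinking $\bar\varepsilon$ so that $c-\bar\varepsilon>I(u_\pm)+\frac\alpha2$, which is legitimate because $c\ge I(u_-)+\alpha$ and, in the $u_+<\infty$ case, $c\ge I(u_+)+\alpha$ too), property (iv) yields $\eta(\gamma(0))=\gamma(0)$ and $\eta(\gamma(1))=\gamma(1)$, so $\tilde\gamma\in\Gamma$. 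But properties (ii)-(iii) give $\max_t I(\tilde\gamma(t))\le\max\{\,c-\bar\varepsilon,\ \max_{\{t:\,|I(\gamma(t))-c|\ge\bar\varepsilon\}}I(\gamma(t))\,\}\le c-\bar\varepsilon$, contradicting the definition of $c$ as an infimum of such maxima. Hence there is $u\in\mathcal C_*$ with $I(u)=c$ and $I'(u)=0$; by Remark~\ref{TfixI'} this means $\tilde T(u)=u$, i.e. $u$ is a weak solution of \eqref{tildeP}, and since $u\in\mathcal C\subset\mathcal C_*$ Lemma~\ref{truncated} upgrades it to a weak solution of \eqref{P}, equivalently of \eqref{Pg}. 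It remains to note $u\notin\{u_-,u_+\}$: indeed $I(u)=c\ge I(u_-)+\alpha>I(u_-)$ and, when $u_+<\infty$, $I(u)=c\ge I(u_+)+\alpha>I(u_+)$, so $u$ is distinct from both constants; when $u_+=\infty$ there is simply no constant $u_+$ to exclude. The main obstacle is the bookkeeping around the two cases $u_+<\infty$ and $u_+=\infty$ — in particular verifying that $\eta$ fixes the endpoints of a near-optimal path, which requires choosing $\bar\varepsilon$ small enough that both $U_-$ and $U_+$ lie strictly below level $c-\bar\varepsilon$, and this in turn leans on the quantitative gap $c\ge I(u_\pm)+\alpha$ just established.
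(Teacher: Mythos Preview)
Your proof is correct and follows the same route as the paper: bound $c$ below by $I(u_\pm)+\alpha$ via the collar argument (Lemma~\ref{sec:case-multiple-fixed-2}, using Lemma~\ref{bounded} for the $L^\infty$-continuity of $t\mapsto\gamma(t)$), exhibit an explicit admissible path to get $c<\infty$, then run the deformation contradiction with $\bar\varepsilon$ small relative to $\alpha$. Two cosmetic slips to fix: the inclusion should read $u\in\mathcal C_*\subset\mathcal C$ (not the reverse), and for property~(iv) of Lemma~\ref{deformation} to fix the endpoints you need $I(\gamma(0)),I(\gamma(1))<c-2\bar\varepsilon$ rather than $c-\bar\varepsilon$, so the right smallness condition is $4\bar\varepsilon<\alpha$, exactly as in the paper.
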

\begin{proof} We first observe that, by Lemma \ref{truncated}, any critical point of $I$ solves weakly \eqref{P} which is equivalent to \eqref{Pg}.
 
{\it Case $u_+<\infty$.} Pick any $\gamma\in\Gamma$. We note that by the definition of $U_-$ and $U_+$, and by the fact that $\tau<\min\{u_0-u_-,u_+-u_0\}$, we get $\|\gamma(1)-u_-\|_{L^\infty(B)}>\tau$ and $\|\gamma(0)-u_+\|_{L^\infty(B)}>\tau$. Now, since $\gamma$ is continuous with respect to the $W^{1,p}$-norm, by Lemma \ref{bounded} it is continuous also with respect to the $L^\infty$-norm. So, there exist $t_-,\,t_+\in(0,1)$ such that $\|\gamma(t_-)-u_-\|_{L^{\infty}(B)}=\tau$ and $\|\gamma(t_+)-u_+\|_{L^{\infty}(B)}=\tau$. Hence, by Lemma \ref{sec:case-multiple-fixed-2}, $I(\gamma(t_-))\ge I(u_-)+\alpha$ and $I(\gamma(t_+))\ge I(u_+)+\alpha$, which imply immediately that 
\begin{equation}\label{uu-u+}
c \ge \max \{I(u_-),I(u_+)\}+\alpha>\max \{I(u_-),I(u_+)\}.
\end{equation} 
On the other hand, $\Gamma$ is not empty, since it contains at least the path $t\in[0,1]\mapsto (1-t)u_-+tu_+$, hence $c<+\infty$. Therefore, $c$ is a finite number. 

Now, assume by contradiction that there does not exist a critical point $u\in\mathcal C_*$ for which $I(u)=c$. Then, there exists a deformation $\eta:\mathcal C_*\to\mathcal C_*$ satisfying (i)-(iv) of Lemma \ref{deformation}, with $\bar\varepsilon=\bar\varepsilon(c)>0$ given by Lemma \ref{conseqPS}. Without loss of generality, we assume that $4\bar\varepsilon<\alpha$. By the definition \eqref{minmax} of $c$, there exists a curve $\gamma\in\Gamma$ such that 
\begin{equation}\label{gammaok}\max_{t\in[0,1]}I(\gamma(t))<c+\bar\varepsilon
\end{equation} 
and we define the curve $\bar\gamma:[0,1]\to\mathcal C_*$, by $\bar\gamma(t):=\eta(\gamma(t))$. We check that also $\bar\gamma\in\Gamma$. Indeed, clearly $\bar\gamma\in C([0,1];\mathcal C_*)$. Moreover, $\bar\gamma(0)\in U_-$ because, by the definition of $U_-$ and by \eqref{uu-u+},
\begin{equation}\label{bargamma0}
I(\gamma(0))<I(u_-)+\frac\alpha2\le c-\alpha+\frac\alpha2<c-2\bar\varepsilon
\end{equation}
and so, by Lemma \ref{deformation}-(iv), $\bar\gamma(0)=\gamma(0)\in U_-$. Analogously, $\bar\gamma(1)=\gamma(1)\in U_+$. Furthermore, by \eqref{gammaok} and by Lemma \ref{deformation}-(iii), we get $I(\eta(\gamma(t)))\le c-\bar\varepsilon$ for all $t\in[0,1]$. Hence
$$\max_{t\in[0,1]}I(\bar\gamma(t))\le c-\bar\varepsilon,$$
which yields a contradiction with \eqref{minmax}. Finally, \eqref{uu-u+} ensures that the critical point $u\in\mathcal C_*$ at level $c$ cannot be  $u_-$ or $u_+$.
\smallskip

{\it Case $u_+=\infty$.} As in the previous case, for any $\gamma\in\Gamma$ there exists $t_-\in(0,1)$ for which $I(\gamma (t_-))\ge I(u_-)+\alpha$, and so we have 
\begin{equation}\label{cu-}
c \ge I(u_-)+\alpha>I(u_-).
\end{equation} 
Furthermore, for any $t>M$ we get by the fact that $\tilde f\in\mathfrak F$ 
\begin{equation}\label{I-infty}\begin{aligned}I(t\cdot 1)&=|B|\left(\frac{t^p}p-\int_0^t\tilde f(s)ds\right)\\
&\le |B|\left(\frac{t^p}p-\int_0^M\tilde f(s)ds-(m+\delta)\int_M^ts^{p-1}ds\right)\\
&\le\frac{|B|}p\left(t^p-pM\min_{s\in[0,M]}\tilde f(s)-(m+\delta)(t^p-M^p)\right)\\
&=C-\frac{|B|(m+\delta-1)}pt^p\to-\infty\quad\mbox{as }t\to\infty,
\end{aligned}
\end{equation}
being $m+\delta-1>0$. Hence, we can find a sufficiently large constant $k>0$ such that the curve
$$\gamma:\,t\in[0,1]\mapsto\, u_-+kt\in\mathcal C_*$$
is such that $\gamma(0)=u_-\in U_-$ and $\gamma(1)=u_-+k\in U_+$. Therefore, $\gamma\in\Gamma$ and consequently $c<+\infty$.
Now, suppose by contradiction that there does not exist any critical point $u\in\mathcal C_*$ of $I$, such that $I(u)=c$. Then, by Lemma \ref{conseqPS}-(i), $\|I'(u)\|_*\ge\bar\delta$ for any $u\in\mathcal C_*$ such that $|I(u)-c|\le 2\bar\varepsilon$. Without loss of generality, we can take $4\bar\varepsilon<\alpha$. In correspondence of $\bar\varepsilon$, consider the deformation $\eta$ built in Lemma \ref{deformation} and a curve $\gamma\in\Gamma$ such that $\max_{t\in[0,1]}I(\gamma(t))<c+\bar\varepsilon$. Now, let $\bar\gamma(t):=\eta(\gamma(t))$. We claim that $\bar\gamma\in\Gamma$. Indeed, $\bar\gamma(0)=\gamma(0)\in U_-$, since \eqref{bargamma0} holds by \eqref{cu-}.
Analogously, being $\gamma\in\Gamma$, $\gamma(1)\in U_+$ and so 
$$I(\gamma(1))< I(u_-)\le c-4\bar\varepsilon.$$
This yields, by Lemma \ref{deformation}-(iv), that $\eta(\gamma(1))=\gamma(1)$ and so $\bar\gamma\in\Gamma$. Now, since $I(\gamma(t))<c+\bar\varepsilon$, by Lemma \ref{deformation}-(iii), $I(\bar\gamma(t))\le c-\bar\varepsilon$ holds for all $t\in[0,1]$. This contradicts the definition of $c$. Hence, there exists a critical point $u\in\mathcal C_*$ of $I$ at level $c$, which is not equal to $u_-$ by \eqref{cu-}.
\end{proof}

\section{The mountain pass solution is non-constant}\label{sec4}
We are now ready to prove that the mountain pass solution $u\in\mathcal C_*\setminus\{u_-,\,u_+\}$ found in the previous section is nonconstant. To this aim, we observe that since in $\mathcal C_*$ the only constant solutions are $u_-$, $u_+$, and $u_0$, it remains to prove that $u\not\equiv u_0$.\smallskip
 
\begin{lemma}\label{4.9}
Let $v\in W^{1,p}(B)\setminus\{0\}$  be such that
\begin{equation}\label{v}
\int_B v dx=0,
\end{equation}
and let
$$
\psi: \R^2 \to \R,\qquad \psi(s,t):=I'(t(u_0+sv))[u_0+sv]. 
$$
There exist $\eps_1,\eps_2>0$ and a
$C^1$-function $h:(-\eps_1,\eps_1) \to (1-\eps_2,1+\eps_2)$
such that for $(s,t) \in V:= (-\eps_1,\eps_1) \times
(1-\eps_2,1+\eps_2)$ we have 
\begin{equation}\label{psi=0}
\psi(s,t)=0\quad \mbox{if and only if}\quad t=h(s).
\end{equation}
Moreover, 
\begin{itemize}
\item[(i)] $h(0)=1$, $h'(0)=0$;
\item[(ii)] $I(h(s)(u_0+sv))<I(u_0)$ for $s \in (-\eps_1,\eps_1)$, $s\neq 0$;
\item[(iii)] $\frac{\partial}{\partial t}\psi(s,t)<0$ for $(s,t)\in V$.
\end{itemize}
\end{lemma}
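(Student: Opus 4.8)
The statement is an application of the implicit function theorem to the scalar equation $\psi(s,t)=0$ near the point $(0,1)$, followed by a second-order Taylor expansion to read off the sign information in (i)--(iii). First I would record that $I$ is $C^2$ (as noted after \eqref{I}, using $p>2$), so $\psi$ is $C^1$ on $\R^2$. Next I would compute the two first-order partials of $\psi$ at $(0,1)$. Since $u_0$ is a constant solution of \eqref{P}, it is a critical point of $I$, so $I'(u_0)=0$; hence $\psi(0,1)=I'(u_0)[u_0]=0$, which gives the base point. The decisive computation is $\frac{\partial}{\partial t}\psi(0,1)$. Writing $w_s:=u_0+sv$, we have $\psi(s,t)=I'(tw_s)[w_s]$, so
\[
\frac{\partial}{\partial t}\psi(s,t)=I''(tw_s)[w_s,w_s],
\]
and at $(s,t)=(0,1)$ this is $I''(u_0)[u_0,u_0]$. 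Using the explicit form of $I$ in \eqref{I}, one computes
\[
I''(u_0)[\varphi,\varphi]=\int_B\Big((p-1)|\nabla u_0|^{p-2}|\nabla\varphi|^2+m(p-1)u_0^{p-2}\varphi^2-\tilde f'(u_0)\varphi^2\Big)\,dx,
\]
and since $\nabla u_0=0$, at $\varphi=u_0$ this equals $\big(m(p-1)u_0^{p-2}-\tilde f'(u_0)\big)u_0^2|B|$, which is strictly negative by $(f_3)$ (recall $\tilde f$ satisfies $(f_3)$ by Lemma~\ref{truncated}). This nonvanishing of $\partial_t\psi$ at $(0,1)$ is precisely the hypothesis of the implicit function theorem, yielding $\eps_1,\eps_2>0$ and a $C^1$ map $h:(-\eps_1,\eps_1)\to(1-\eps_2,1+\eps_2)$ with $h(0)=1$ and \eqref{psi=0}; shrinking $\eps_1,\eps_2$ if necessary, continuity of $\partial_t\psi$ and its strict negativity at $(0,1)$ give (iii) on all of $V$.

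For the remaining assertions I would exploit the structure $\psi(s,t)=\frac{d}{dt}\big[\,t\mapsto I(tw_s)\big]\cdot\frac1t$ — more precisely $I'(tw_s)[w_s]=\frac{d}{dt}I(tw_s)$ — so that for fixed $s$ the function $t\mapsto I(tw_s)$ has $h(s)$ as its unique critical point in $(1-\eps_2,1+\eps_2)$, and by (iii) this critical point is a strict local maximum. To get (i), differentiate the identity $\psi(s,h(s))\equiv0$ in $s$ at $s=0$:
\[
\partial_s\psi(0,1)+\partial_t\psi(0,1)\,h'(0)=0 .
\]
Since $\partial_t\psi(0,1)\neq0$, it suffices to show $\partial_s\psi(0,1)=0$. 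Now $\partial_s\psi(s,t)=\partial_s\big(I'(tw_s)[w_s]\big)=tI''(tw_s)[v,w_s]+I'(tw_s)[v]$, and at $(0,1)$ this is $I''(u_0)[v,u_0]+I'(u_0)[v]=I''(u_0)[v,u_0]$ because $I'(u_0)=0$. Using the Hessian formula above with $\nabla u_0=0$, $I''(u_0)[v,u_0]=u_0\big(m(p-1)u_0^{p-2}-\tilde f'(u_0)\big)\int_B v\,dx=0$ by the hypothesis \eqref{v}. Hence $h'(0)=0$, proving (i).

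Finally, for (ii) I would Taylor-expand $\varphi(s):=I(h(s)(u_0+sv))=I(h(s)w_s)$ to second order at $s=0$. Its first derivative is $\varphi'(s)=I'(h(s)w_s)\big[h'(s)w_s+h(s)v\big]$; using $\psi(s,h(s))=0$, i.e. $I'(h(s)w_s)[w_s]=0$, the $h'(s)w_s$ term drops and $\varphi'(s)=h(s)I'(h(s)w_s)[v]$, so $\varphi'(0)=I'(u_0)[v]=0$. Differentiating once more and evaluating at $s=0$, using $h(0)=1$, $h'(0)=0$ and $I'(u_0)=0$,
\[
\varphi''(0)=I''(u_0)[v,v].
\]
Thus it remains to check $I''(u_0)[v,v]<0$ for every $v\neq0$ with $\int_B v=0$; from the Hessian formula with $\nabla u_0=0$ this is
\[
I''(u_0)[v,v]=\big(m(p-1)u_0^{p-2}-\tilde f'(u_0)\big)\int_B v^2\,dx<0,
\]
again by $(f_3)$ and $v\not\equiv0$. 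So $\varphi(s)=I(u_0)+\tfrac12 I''(u_0)[v,v]\,s^2+o(s^2)<I(u_0)$ for $s$ small and nonzero; shrinking $\eps_1$ if necessary gives (ii) on all of $(-\eps_1,\eps_1)\setminus\{0\}$. The only mildly delicate point is bookkeeping the several applications of the chain rule for the $C^2$ functional $I$ and repeatedly shrinking $\eps_1,\eps_2$ so that (ii) and (iii) hold on the full neighborhood; there is no real analytic obstacle, the whole argument being driven by the single sign condition $m(p-1)u_0^{p-2}-\tilde f'(u_0)<0$ coming from $(f_3)$.
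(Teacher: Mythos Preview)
Your proof is correct and follows essentially the same route as the paper: both apply the implicit function theorem at $(0,1)$ using $\partial_t\psi(0,1)=I''(u_0)[u_0,u_0]<0$ from $(f_3)$, both obtain $h'(0)=0$ from $\partial_s\psi(0,1)=I''(u_0)[u_0,v]=0$ via \eqref{v}, and both establish (ii) by a second-order expansion yielding the leading term $\tfrac{s^2}{2}I''(u_0)[v,v]<0$. The only cosmetic difference is that the paper expands $I$ around $u_0$ using $h(s)(u_0+sv)-u_0=sv+o(s)$, while you compute $\varphi''(0)$ by the chain rule; also, your displayed formula for the gradient part of $I''(u_0)[\varphi,\varphi]$ is not the general Hessian of the $p$-Dirichlet energy, but since $\nabla u_0=0$ and $p>2$ that term vanishes regardless, so the conclusion is unaffected.
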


\begin{proof}
Since $I$ is a $C^2$-functional, $\psi$ is of class $C^1$ with $\psi(0,1)=0$. By $(f_3)$ we get 
\begin{equation}\label{psi'<0}
\frac{\partial}{\partial t}\Big|_{(0,1)}\psi(s,t) = I''(u_0)[u_0,u_0]= [m(p-1)u_0^{p-2}-\tilde f'(u_0)]\int_{B} 
u_0^2 \,dx <0
\end{equation}
and by \eqref{v}
$$
\frac{\partial}{\partial s}\Big|_{(0,1)}\psi(s,t) =I'(u_0)[v]+  I''(u_0)[u_0,v]=
[m(p-1)u_0^{p-2}-\tilde f'(u_0)]u_0 \int_{B} 
v\,dx =0.
$$
Thus the existence of $\eps_1,\eps_2$ and $h$, as well as property (i),
follow from the Implicit Function Theorem. To prove (ii), we write 
$h(s)= 1+o(s)$, for $s\in(-\varepsilon_1,\varepsilon_1)$, $s\neq 0$, so that 
$$
h(s)(u_0+sv)-u_0= sv + o(s)
$$
and therefore, by Taylor expansion and $(f_3)$,
\begin{align*}
I(h(s)(u_0+sv)) -I(u_0) &= \frac{1}{2} I''(u_0)[sv +
o(s),sv+o(s)]+o(s^2)\\&=\frac{s^2}{2} I''(u_0)[v,v]+o(s^2)\\
&=\frac{s^2}{2}\int_B[m(p-1)u_0^{p-2}-\tilde
  f'(u_0)]v^2\,dx+o(s^2)<0.
\end{align*}
Then, property (ii) holds after making $\eps_1$, $\eps_2$ smaller if necessary, and property (iii) is a consequence of \eqref{psi'<0} and of the regularity of $\psi$. 
\end{proof}

\begin{remark} Let $\mathcal N_*$ be the following Nehari-type set (see also \eqref{eq:M_q_def} ahead)
\begin{equation}\label{eq:nehari_def}
\mathcal N_*:=\{u\in \mathcal{C}_*\setminus\{0\}\,:\, I'(u)[u]=0\}.
\end{equation}
The previous lemma shows that $u_0$ is not a local minimum of the functional $I$ restricted to $\mathcal{N}_*$, that is to say, for every $\varepsilon>0$ there esists $u_\varepsilon \in \{ u\in W^{1,p}(B): \ \|u-u_0\|<\varepsilon \}\cap \mathcal N_*$  such that $I(u_\varepsilon)<I(u_0)$.

Indeed, if $v\in W^{1,p}(B)\setminus\{0\}$ is radial, nondecreasing and satisfies \eqref{v}, then for every  $s \in (-\eps_1,\eps_1)$, $s\neq 0$, it holds
\[
h(s)(u_0+sv)\in \mathcal N_*\quad\mbox{and}\quad  I(h(s)(u_0+sv))<I(u_0).
\]
Furthermore, since $h(s) \in C^1((-\eps_1,\eps_1))$ and $h(0)=1$, 
\[
\lim_{s\to0} \|h(s)(u_0+sv)-u_0\|=0,
\]
so that for every $\varepsilon>0$ there exists $s_\varepsilon \in (-\eps_1,\eps_1)$ such that $\|h(s_\varepsilon)(u_0+s_\varepsilon v)-u_0\|<\varepsilon$. The statement then follows with $u_\varepsilon=h(s_\varepsilon)(u_0+s_\varepsilon v)$.
\end{remark}

\begin{lemma}\label{lemma:nonconstant_p>2}
Fix 
$0<t_-<1<t_+$ such that  
\begin{equation}
  \label{eq:3}
t_- u_0 \in U_-,\quad t_+
u_0 \in U_+ \quad \text{and}\quad    u_- < t_- u_0 < u_0 < t_+ u_0 < u_+, 
\end{equation}
where $U_\pm$ are defined in $(\ref{eq:2})$. 
Let $v\in W^{1,p}(B)\setminus\{0\}$ radial, nondecreasing, satisfy \eqref{v}.
For $s  \ge 0$ define
\begin{equation}
  \label{eq:4}
\gamma_s: [t_-,t_+]\to W^{1,p}(B) \qquad \gamma_s(t):= t(u_0+sv).
\end{equation}
Then there exists $\bar s>0$ such that $\gamma_{\bar s}(t_\pm ) \in U_\pm$, $\gamma_{\bar s}(t) \in \mathcal C_*$ for $t_- \le t \le t_+$ and 
\begin{equation}
  \label{eq:1}
\max_{t_- \le t \le t_+} I(\gamma_{\bar s}(t))<I(u_0).  
\end{equation}
\end{lemma}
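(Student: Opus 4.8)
The plan is to build the competitor curve $\gamma_{\bar s}$ explicitly from the family $\gamma_s(t)=t(u_0+sv)$ introduced in \eqref{eq:4}, and to show that for $\bar s>0$ small enough all three required properties hold. First I would verify that $\gamma_s(t)\in\mathcal C_*$ for $t\in[t_-,t_+]$ once $s$ is small: since $v$ is radial and nondecreasing, $u_0+sv$ is radial and nondecreasing, hence so is $t(u_0+sv)$ for $t>0$; and since $u_-<t_-u_0<u_0<t_+u_0<u_+$ by \eqref{eq:3}, continuity in $s$ (uniformly in $t\in[t_-,t_+]$, using that $\|t s v\|_{L^\infty}\le t_+ s\|v\|_{L^\infty}$) guarantees $u_-\le t(u_0+sv)\le u_+$ on $B$ for all $t\in[t_-,t_+]$ provided $\bar s$ is small. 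This uses Lemma~\ref{bounded} to pass from $W^{1,p}$-bounds on $v$ to $L^\infty$-bounds.

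Next I would handle the endpoint conditions $\gamma_{\bar s}(t_\pm)\in U_\pm$. At $s=0$ we have $\gamma_0(t_\pm)=t_\pm u_0$, which lie in the \emph{open} sets $U_\pm$ by \eqref{eq:3}; both defining conditions of $U_\pm$ (the strict energy inequality and the strict $L^\infty$-distance inequality, or in the case $u_+=\infty$ the condition $I<I(u_-)$ together with $\|\cdot-u_-\|_{L^\infty}>\tau$) are open conditions, and $s\mapsto\gamma_s(t_\pm)$ is continuous into $W^{1,p}(B)$, hence (via Lemma~\ref{bounded}) into $L^\infty(B)$, and $I$ is continuous. Therefore $\gamma_{\bar s}(t_\pm)\in U_\pm$ for $\bar s$ sufficiently small; I should also make sure $\bar s<\eps_1$ so that Lemma~\ref{4.9} applies.

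The heart of the argument is \eqref{eq:1}: the max of $I$ along $\gamma_{\bar s}$ must stay strictly below $I(u_0)$. The strategy is to split $[t_-,t_+]$ into a neighborhood of $t=1$ and its complement. Away from $t=1$, say on $\{|t-1|\ge\rho\}$, one has $I(\gamma_0(t))=I(tu_0)<I(u_0)$ strictly: indeed along the ray $t\mapsto tu_0$ the function $t\mapsto I(tu_0)$ has $t=1$ as a critical point (because $u_0$ solves the equation, so $\frac{d}{dt}I(tu_0)|_{t=1}=I'(u_0)[u_0]=0$) and as a strict local max by \eqref{psi'<0}, i.e. $\frac{d^2}{dt^2}I(tu_0)|_{t=1}=I''(u_0)[u_0,u_0]<0$; moreover $t=1$ is the only critical point of $t\mapsto I(tu_0)$ on $[t_-,t_+]$ because $\psi(0,t)=I'(tu_0)[tu_0]=0$ forces $t(u_0)$ to be a constant solution of \eqref{tildeP}, and in $[t_-u_0,t_+u_0]\subset(u_-,u_+)$ the only such constant is $u_0$. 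Hence $m_\rho:=\max_{|t-1|\ge\rho}I(tu_0)<I(u_0)$, and by uniform continuity of $(s,t)\mapsto I(\gamma_s(t))$ we get $I(\gamma_{\bar s}(t))<I(u_0)$ on this region once $\bar s$ is small. Near $t=1$, one cannot argue by continuity alone since $I(\gamma_0(1))=I(u_0)$; here I would use the $C^2$-expansion exactly as in Lemma~\ref{4.9}(ii)–(iii): on $(-\eps_1,\eps_1)\times(1-\eps_2,1+\eps_2)$, for each fixed small $s\ne0$ the map $t\mapsto I(\gamma_s(t))$ is strictly concave near its critical point $t=h(s)$ (by (iii), $\partial_t\psi<0$, and $\partial_t\psi(s,t)=\frac{d^2}{dt^2}I(\gamma_s(t))$ up to the factor coming from the definition of $\psi$), so $\max_{|t-1|\le\rho}I(\gamma_s(t))=I(\gamma_s(h(s)))=I(h(s)(u_0+sv))<I(u_0)$ by Lemma~\ref{4.9}(ii), provided $\rho$ is small enough that $[1-\rho,1+\rho]\subset(1-\eps_2,1+\eps_2)$ and $h(s)\in(1-\rho,1+\rho)$ (possible since $h(0)=1$, $h$ continuous). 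Choosing $\rho$ first and then $\bar s$ small completes the estimate.

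The main obstacle is precisely this matching near $t=1$: the naive continuity estimate degenerates there because $\gamma_0$ passes through $u_0$ with $I(u_0)$ the value we must beat, so one genuinely needs the quantitative second-order information from the $C^2$-structure of $I$ (equivalently, the nondegeneracy $I''(u_0)[u_0,u_0]<0$ and $I''(u_0)[v,v]<0$ guaranteed by $(f_3)$), together with the implicit-function description of the critical set $\{\psi=0\}$ from Lemma~\ref{4.9}. Once $\rho$ is fixed using Lemma~\ref{4.9}, all remaining smallness requirements on $\bar s$ (cone membership, endpoints in $U_\pm$, the off-critical energy estimate) are open conditions satisfied for $\bar s$ small, so a single choice of $\bar s$ works.
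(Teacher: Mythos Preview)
Your proposal is correct and follows essentially the same strategy as the paper: split $[t_-,t_+]$ into a neighborhood of $t=1$ (where Lemma~\ref{4.9}(ii)--(iii) gives $\max I(\gamma_s(\cdot))=I(h(s)(u_0+sv))<I(u_0)$) and its complement (where $I(tu_0)<I(u_0)$ strictly and continuity in $s$ suffices), then shrink $\bar s$ to secure the open endpoint and cone-membership conditions. The only cosmetic differences are that the paper uses directly the window $V=(-\eps_1,\eps_1)\times(1-\eps_2,1+\eps_2)$ from Lemma~\ref{4.9} instead of an auxiliary $\rho$, and deduces the sign of $\frac{d}{dt}I(tu_0)$ from the sign of $m s^{p-1}-\tilde f(s)$ on $(u_-,u_+)$ rather than via the ``constant solution'' characterization; also note that in fact $\frac{d}{dt}I(\gamma_s(t))=\psi(s,t)$ exactly (no extra factor), so $\partial_t\psi<0$ is literally strict concavity of $t\mapsto I(\gamma_s(t))$ on $V$.
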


\begin{proof} 

{\it Case $u_+<\infty$.} First, we notice that such $t_-$ and $t_+$ exist. Indeed, by Lemma~\ref{sec:case-multiple-fixed-2} we know that $I(tu_0)\ge I(u_-)+\alpha$ for $t=(u_-+\tau)/u_0$. Hence, the continuity of $I$ implies that 
$$\exists \; t_-\in\left(\frac{u_-}{u_0},\frac{u_-+\tau}{u_0}\right)\quad\mbox{such that}\quad I(t_-u_0)<I(u_-)+\frac\alpha2.$$
The existence of $t_+$ can be proved analogously. 

We claim that there exists a positive constant $s_0\le \varepsilon_1$ ($\varepsilon_1$ as in Lemma~\ref{4.9}), such that 
\begin{equation}\label{claimstv}
I(\gamma_s(t)) <I(u_0)\quad\mbox{for all }(s,t) \in [-s_0,s_0]\times [t_-,t_+]\setminus\{(0,1)\}.
\end{equation}
We first observe that the function $t\in[t_-,t_+] \mapsto I(\gamma_0(t))=I(tu_0)$ has a
unique strict maximum point at $1$. Indeed, 
$$
\frac{d}{dt} I(\gamma_0(t)) =I'(t u_0)[u_0]=|B|(m(tu_0)^{p-1}-\tilde f(tu_0))u_0 
$$
and 
\begin{equation}\label{t-+}m(tu_0)^{p-1}-\tilde f(tu_0)\begin{cases}>0 \quad&\mbox{ if }t\in[t_-,1),\\
<0&\mbox{ if }t\in(1,t_+],\end{cases}\end{equation} 
since, being $\tilde f'(u_0)>m(p-1)u_0^{p-2}=m(u^{p-1})'|_{u=u_0}$, the inequalities \eqref{t-+} hold locally near $t=1$ and then, by \eqref{u-u+} and the definition of $t_-$ and $t_+$, they hold in the whole intervals $[t_-,1)$ and $(1,t_+]$, respectively. 
As a consequence, 
$$I(\gamma_0(t))<I(u_0)\quad \mbox{for all }t\in [t_-,1)\cup(1,t_+].$$
Now, by the continuity in $s$ of the function $I(t(u_0+sv))$, there exists $s_0\in(0,\varepsilon_1]$ such that 
\begin{equation}\label{fuoriV}
I(\gamma_s(t)) <I(u_0)\quad\mbox{for all }(s,t) \in [-s_0,s_0]\times [t_-,t_+] \setminus V,
\end{equation}
where we recall that $V=(-\varepsilon_1,\varepsilon_1)\times(1-\varepsilon_2,1+\varepsilon_2)$ as in the previous lemma.
On the other hand, if $(s,t)\in V$, by \eqref{psi=0} and Lemma~\ref{4.9}-(iii) for all $s\in(-\varepsilon_1,\varepsilon_1)$, we have
$$
\frac{d}{dt}I(\gamma_s(t))=\psi(s,t)
\begin{cases}>0\quad&\mbox{if }1-\varepsilon_2<t<h(s),\\
<0&\mbox{if }h(s)<t<1+\varepsilon_2.
\end{cases}
$$
Therefore, for all $s\in(-\varepsilon_1,\varepsilon_1)$, $h(s)$ is the unique maximum point of the map
$t\in(1-\varepsilon_2,1+\varepsilon_2)\mapsto I(\gamma_s(t))$, so that 
\begin{equation}\label{inV}
I(\gamma_s(t))\le I(\gamma_s(h(s)))<I(u_0)\quad\mbox{for all }(s,t)\in V\setminus\{(0,1)\}
\end{equation} 
by
Lemma~\ref{4.9}-(ii). By \eqref{fuoriV} and \eqref{inV}, the claim \eqref{claimstv} follows.

Furthermore, by (\ref{eq:3}) and since $v$ is radial and nondecreasing, we 
may choose $\bar s \in (0,s_0)$ so small that 
$$
\gamma_{\bar s}(t_-)=t_-(u_0+\bar sv) \in U_- \qquad \text{and} \qquad \gamma_{\bar s}(t_+)=t_+(u_0+\bar sv) \in U_+. 
$$
By the convexity of $\mathcal C_*$, for all $t\in[0,1]$
$$t\gamma_{\bar s}(t_-)+(1-t)\gamma_{\bar s}(t_+)=(u_0+\bar sv)[t_++t(t_--t_+)]=\gamma_{\bar s}(t_++t(t_--t_+))\in\mathcal C_*,$$
that is $\gamma_{\bar s}(t)\in\mathcal C_*$ for all $t\in[t_-,t_+]$, and we conclude the proof in this case.
\smallskip

{\it Case $u_+=\infty$.} The existence of $t_-$ follows as in the previous case, while the existence of $t_+$ is a consequence of the facts that $tu_0-u_->\tau$ for all $t>(u_-+\tau)/u_0$ and $I(tu_0)\to-\infty$ as $t\to+\infty$, see \eqref{I-infty}. The rest of the proof is analogous to case above, with the only change in the definition of $U_+$.
\end{proof}

\begin{proof}[$\bullet$ Proof of Theorem \ref{thm:main}]
By Proposition \ref{mountainpass}, there exists a mountain pass solution $u\in\mathcal C_*\setminus\{u_-,\,u_+\}$ of \eqref{Pg} such that $I(u)=c$. Furthermore, $u>0$ by \cite[Theorem~5]{Vazquez}. It only remains to prove that $u\not\equiv u_0$. To this aim, let $\gamma_{\bar s}$ be the curve given in Lemma \ref{lemma:nonconstant_p>2} and define $\bar\gamma(t):=\gamma_{\bar s}(t(t_+-t_-)+t_-)$ for all $t\in[0,1]$. Clearly, $\bar\gamma\in\Gamma$ and $c\le\max_{t\in [0,1]}I(\bar\gamma(t))<I(u_0)$ by the previous lemma. Hence, the mountain pass solution $u$ is different from the constant $u_0$. Since $u\in\mathcal C_*$, and the only constant solutions of \eqref{Pg} in $\mathcal C_*$ are $u_-,\, u_+,$ and $u_0$, this implies in particular that $u$ is nonconstant. 

The second part of the statement is proved by reasoning in the same way for each $u_{0,i}$, with $i=1,\dots,n$. We define $u_{\pm}^{(i)}$ and the cone of nonnegative, radial, nondecreasing functions $\mathcal C_*^{(i)}$, corresponding to each $u_{0,i}$. In this way, for every $i$, we get a nonconstant positive mountain pass solution $u^{(i)}\in\mathcal C_*^{(i)}$. Hence, $u_-^{(i)}\le u^{(i)}\le u_+^{(i)}$. Assume without loss of generality that $u_{0,1}<u_{0,2}<\dots<u_{0,n}$, then $u_-^{(1)}<u_+^{(1)}\le u_-^{(2)}<\dots\le u_+^{(n)}$ and so the $n$ solutions found are distinct.
\end{proof}

\begin{proof}[$\bullet$ Proof of Theorem \ref{thm:p=2}]
The proof of Theorem \ref{thm:main} works also for the case $p=2$, with the only exception of Lemma \ref{4.9}. In order to prove this lemma, we need the stronger assumption $(g_3')$ instead of $(g_3)$, and we can proceed as in \cite[Lemma~4.9]{BNW}.  
\end{proof}

\section{Asymptotic behavior in the pure power case}\label{sec5}
Let $q>p>2$. In this section we study the problem \eqref{Pg} for $g(u)=u^{q-1}$, namely
\begin{equation}\label{eq:pure_power}
\begin{cases}-\Delta_p u+u^{p-1}=u^{q-1}\quad&\mbox{in }B,\\
u>0 \quad&\mbox{in }B,\\
\partial_\nu u=0&\mbox{on }\partial B.
\end{cases}
\end{equation}

By Theorem \ref{thm:main} there exists a radial nondecreasing solution of \eqref{eq:pure_power} for every $q>p$. We remark that, concerning the notation in Sections \ref{sec2}-\ref{sec4}, in this specific case, $f=g$, $m=1$, $u_0=1$, $u_-=0$, $u_+=\infty$ and $\mathcal C_*=\mathcal C$. 
In this section we aim to find the asymptotic behavior of this solution of \eqref{eq:pure_power} as $q\to\infty$.

For all $q\ge p+1$, the functions $f_q(s):=s^{q-1}$ belong to the same set $\mathfrak F$ defined in \eqref{FmdeltaM}, with $m=1$ and $\delta=M-1$ for a fixed $M>1$, i.e. 
$$
f_q\in\mathfrak{F}=\left\{\varphi\in C([0,\infty))\,:\,\varphi\mbox{ nonnegative, }\varphi(s)\ge M s^{p-1}\,\mbox{for all }s\ge M\right\},\,q\ge p+1.
$$

For our analysis we need an additional property (namely \eqref{eq:f/s_increasing} below) on the truncated function $\tilde{f}$ introduced in Lemma \ref{truncated}; in order to ensure it, we provide here a more explicit construction of $\tilde{f}$.

\begin{lemma}\label{lemma:truncated2} 
For every $q\ge p+1$, there exists $\tilde{f}_q\in \mathfrak F\cap C^1([0,\infty))$ nondecreasing, satisfying $(g_1)$-$(g_3)$,
\begin{equation}\label{eq:f/s_increasing}
\mbox{fixed any } s>0, \quad\mbox{the map }t\in(0,\infty)\mapsto\frac{\tilde{f}_q(ts)}{t^{p-1}} \textrm{ is increasing},
\end{equation}
\begin{equation}\label{eq:subcritical}
\exists\,\ell\ \in\left(p,\min\left\{\frac{(p-1)^2+p-2}{p-2},p^*\right\}\right)\quad\mbox{such that }\quad\lim_{s\to\infty} \frac{\tilde{f}_q(s)}{s^{\ell-1}}=d,
\end{equation}
for some $d>0$, and with the property that if $u\in\mathcal C$ solves 
\begin{equation}\label{tildePpp}
\begin{cases}-\Delta_p u+u^{p-1}=\tilde{f}_q(u)\quad&\mbox{in }B,\\
u>0&\mbox{in }B,\\
\partial_\nu u=0&\mbox{on }\partial B,
\end{cases}
\end{equation}
then $u$ solves \eqref{eq:pure_power}. 
\end{lemma}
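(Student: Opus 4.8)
The plan is to realize the truncation of Lemma~\ref{truncated} by one explicit formula engineered so that the extra monotonicity \eqref{eq:f/s_increasing} is automatic. Fix once and for all $\ell$ as in \eqref{defell} and a constant $s_0>\max\{K_\infty,M\}$, where $M>1$ is fixed as in the paragraph preceding the statement and $K_\infty$ is the constant of Lemma~\ref{apriori}; since all the $f_q(s)=s^{q-1}$, $q\ge p+1$, lie in the \emph{same} set $\mathfrak F$ of \eqref{FmdeltaM}, the constants $K_\infty$ and $M$, and hence $s_0$, may be chosen independently of $q$. For each $q\ge p+1$ I would then define
\[
\tilde f_q(s):=\begin{cases} s^{q-1}&\text{if }0\le s\le s_0,\\ a_q\, s^{\ell-1}+b_q\, s^{p-1}&\text{if }s>s_0,\end{cases}
\]
where $a_q:=\frac{q-p}{\ell-p}\,s_0^{q-\ell}>0$ and $b_q:=\frac{\ell-q}{\ell-p}\,s_0^{q-p}$ are the unique coefficients for which $\tilde f_q$ matches $s^{q-1}$ together with its first derivative at $s_0$; thus $\tilde f_q\in C^1([0,\infty))$ and $\tilde f_q(0)=0$. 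Note that $b_q$ is negative exactly when $q>\ell$.

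I would next dispatch the elementary properties. Nonnegativity is clear on $[0,s_0]$, and on $[s_0,\infty)$ it follows from monotonicity together with $\tilde f_q(s_0)=s_0^{q-1}>0$. For monotonicity on $[s_0,\infty)$ the point is that $\tilde f_q'(s)/s^{p-2}=a_q(\ell-1)s^{\ell-p}+b_q(p-1)$ is increasing in $s$ (as $a_q>0$ and $\ell>p$), so it suffices that it be nonnegative at $s_0$, where a short computation gives the value $(q-1)s_0^{q-p}>0$; this is the only place where the possibly negative sign of $b_q$ has to be absorbed. Condition $(g_1)$ holds because $\tilde f_q(s)/s^{p-1}=s^{q-p}\to0$ as $s\to0^+$, and $(g_2)$ because $\tilde f_q(s)/s^{p-1}=a_qs^{\ell-p}+b_q\to+\infty$; for $(g_3)$ I would take $u_0=1$, which lies below $s_0$, so $\tilde f_q=f_q$ near $1$ and hence $\tilde f_q(1)=1$, $\tilde f_q'(1)=q-1>p-1$.

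The three new items are then short. For \eqref{eq:f/s_increasing} observe that, for fixed $s>0$, $\tilde f_q(ts)/t^{p-1}=s^{p-1}\,\tilde f_q(ts)/(ts)^{p-1}$, so the claim reduces to $\sigma\mapsto\tilde f_q(\sigma)/\sigma^{p-1}$ being increasing on $(0,\infty)$; this map equals $\sigma^{q-p}$ on $(0,s_0]$ and $a_q\sigma^{\ell-p}+b_q$ on $[s_0,\infty)$, each piece strictly increasing and continuous across $s_0$. For \eqref{eq:subcritical}, $\tilde f_q(s)/s^{\ell-1}=a_q+b_qs^{p-\ell}\to a_q=:d>0$. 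Membership $\tilde f_q\in\mathfrak F$ is checked by verifying $\tilde f_q(s)\ge Ms^{p-1}$ for $s\ge M$, splitting at $s_0$: on $[M,s_0]$ one has $\tilde f_q(s)=s^{p-1}s^{q-p}\ge s^{p-1}M^{q-p}\ge Ms^{p-1}$ since $s\ge M>1$ and $q-p\ge1$, while on $[s_0,\infty)$, $\tilde f_q(s)/s^{p-1}=a_qs^{\ell-p}+b_q\ge s_0^{q-p}>M$ by the monotonicity just proved. Finally, if $u\in\mathcal C$ solves \eqref{tildePpp}, then, $\tilde f_q\in\mathfrak F$ being established, Lemma~\ref{apriori} gives $\|u\|_{L^\infty(B)}\le K_\infty<s_0$, hence $\tilde f_q(u)=u^{q-1}$ pointwise and $u$ solves \eqref{eq:pure_power}.

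The only genuinely delicate point is the design of the extension past $s_0$. In the regime $f_q(s_0)>(m+\delta)s_0^{p-1}$, which always occurs here, the construction of Lemma~\ref{truncated} leaves in $\tilde f(s)/s^{p-1}$ a contribution of the form (positive constant)$\,\cdot s^{1-p}$, so that $\tilde f(s)/s^{p-1}$ is decreasing just to the right of $s_0$ and \eqref{eq:f/s_increasing} fails. Replacing it by the two-parameter family $a\,s^{\ell-1}+b\,s^{p-1}$ supplies exactly the freedom needed to secure the $C^1$ gluing at $s_0$, the monotonicity of $\tilde f_q(s)/s^{p-1}$ on all of $(0,\infty)$, and the prescribed power growth $s^{\ell-1}$ at infinity, all at once.
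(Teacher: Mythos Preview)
Your proof is correct and follows essentially the same strategy as the paper: fix $s_0>\max\{K_\infty,M\}$ and $\ell$ once and for all, then glue an explicit $C^1$ extension beyond $s_0$ designed so that $\sigma\mapsto\tilde f_q(\sigma)/\sigma^{p-1}$ stays increasing. The only difference is the choice of extension: the paper takes $\tilde f_q(s)=s_0^{q-1}+\frac{q-1}{\ell-1}s_0^{q-\ell}(s^{\ell-1}-s_0^{\ell-1})$, i.e.\ the form $A\,s^{\ell-1}+B$ with $B$ constant, whereas you use $a_q\,s^{\ell-1}+b_q\,s^{p-1}$. Both two-parameter families match $s^{q-1}$ in $C^1$ at $s_0$, give the limit \eqref{eq:subcritical}, and make $\tilde f_q(\sigma)/\sigma^{p-1}$ monotone (for the paper's formula this map equals $A\sigma^{\ell-p}+B\sigma^{1-p}$, whose derivative at $s_0$ computes to $(q-p)s_0^{q-p-1}>0$ and then stays positive for $\sigma\ge s_0$); your choice has the mild aesthetic advantage that $\tilde f_q(\sigma)/\sigma^{p-1}=a_q\sigma^{\ell-p}+b_q$ is visibly increasing without any computation. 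Your closing remark that the construction of Lemma~\ref{truncated} would leave a positive-constant$\,\cdot\,s^{1-p}$ term spoiling \eqref{eq:f/s_increasing} is a fair observation and explains why the paper, like you, introduces a new explicit formula here rather than invoking Lemma~\ref{truncated}.
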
 
\begin{proof} By Lemma \ref{apriori}, there exists $K_{\infty}$ such that $\|u\|_{L^\infty(B)}\leq K_{\infty}$ for every $u\in\mathcal{C}$ solution of \eqref{eq:pure_power}. Notice that $K_{\infty}\geq1$, because $1\in\mathcal{C}$ is a solution of \eqref{eq:pure_power} for every $q$.

Fix $s_0>\max\{K_{\infty},M\}$ and $\ell\in\left(p,\min\left\{\frac{(p-1)^2+p-2}{p-2},p^*\right\}\right)$.
We define
\begin{equation}\label{truncatedpurepower}
\tilde{f}_q(s):=\begin{cases}s^{q-1}\quad&\mbox{if }s\in[0,s_0],\\
s_0^{q-1}+\frac{q-1}{\ell-1}s_0^{q-\ell}(s^{\ell-1}-s_0^{\ell-1})&\mbox{otherwise}.\end{cases}
\end{equation}
It is straightforward to verify that $\tilde{f}_q$ is of class $C^1$, nonnegative and nondecreasing; it satisfies \eqref{eq:f/s_increasing} and \eqref{eq:subcritical}, with $d=(q-1)s_0^{q-\ell}/(\ell-1)$, which implies also $(g_2)$. Since $\tilde{f}_q(s)=s^{q-1}$ in $[0,1]$, $\tilde{f}_q$ also satisfies $(g_1)$ and $(g_3)$.

In order to prove that $\tilde f_q\in \mathfrak F$, it remains to show that 
\begin{equation}\label{uff}\tilde f_q(s)\ge M s^{p-1}\quad\mbox{for all }s\ge M.\end{equation}
Since $\tilde f_q=f_q$ in $[M,s_0]$, it is enough to verify that \eqref{uff} for all $s> s_0$.
This is equivalent to show that  
\begin{equation}\label{eq:xi}
\xi(s):=[(q-1)s_0^{q-\ell}s^{\ell-p}-M(\ell-1)]s^{p-1}\geq (q-\ell) s_0^{q-1} \quad \textrm{for all } s> s_0.
\end{equation}
Now, $\xi(s_0)>(q-\ell) s_0^{q-1}$, being $s_0>M$. Moreover, $\xi'(s)\ge 0$ for all $s\ge s_0$. Therefore, \eqref{eq:xi} holds and so, by Lemma \ref{apriori}, all solutions of \eqref{tildePpp} solve also \eqref{eq:pure_power}.
\end{proof}

We denote by $\tilde F_q$ the primitive of $\tilde f_q$ and by $I_q$ the associated energy functional.

We introduce the Nehari-type set
\begin{equation}\label{eq:M_q_def}
\mathcal N_q:=\left\{u\in\mathcal C\setminus\{0\}\,: \int_B(|\nabla u|^p+|u|^p)dx=\int_B\tilde f_q(u)udx\right\}.
\end{equation}

\begin{lemma}\label{palla_nella_Nehari}
There exists $\sigma>0$ such that $$\inf_{q\ge p+1}\inf_{u\in\mathcal N_q}\|u\|_{L^\infty(B)}\ge\sigma.$$
\end{lemma}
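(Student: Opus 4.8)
The plan is to establish a uniform lower bound on $\|u\|_{L^\infty(B)}$ over all $u\in\mathcal N_q$ and all $q\ge p+1$ by combining the Nehari constraint with the behavior of $\tilde f_q$ near zero, where $\tilde f_q(s)=s^{q-1}$. First I would fix $u\in\mathcal N_q$ and observe that the defining identity $\|u\|^p=\int_B\tilde f_q(u)u\,dx$ can be split according to the sublevel set $\{u<1\}$ and its complement: on $\{u<1\}$ we have $\tilde f_q(u)u=u^q\le u^p$, so this part of the integral is at most $\int_B u^p\,dx\le m^{-1}\|u\|^p$ (here $m=1$ in the pure power case). The key point is that this contribution is strictly controlled, so that a definite fraction of $\|u\|^p$ must come from the region $\{u\ge 1\}$; more precisely, $\int_{\{u\ge1\}}\tilde f_q(u)u\,dx$ is bounded below by a positive multiple of $\|u\|^p$ (after absorbing, using $m>$ the limit in $(g_1)$, which here is $0<1$).

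Next I would use the a priori machinery already available. Since $\tilde f_q\in\mathfrak F$ with the same $\delta$ and $M$ for all $q\ge p+1$, and since any $u\in\mathcal N_q$ with $\|u\|_{L^\infty}$ small would force the nonlinear term to be small, I would argue by contradiction: suppose there are $q_n\ge p+1$ and $u_n\in\mathcal N_{q_n}$ with $\|u_n\|_{L^\infty(B)}\to 0$. Then eventually $\|u_n\|_{L^\infty(B)}<1$, so $\tilde f_{q_n}(u_n)=u_n^{q_n-1}$ pointwise, and the Nehari identity reads $\|u_n\|^p=\int_B u_n^{q_n}\,dx\le\|u_n\|_{L^\infty(B)}^{q_n-p}\int_B u_n^p\,dx\le\|u_n\|_{L^\infty(B)}^{q_n-p}\|u_n\|^p$. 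Since $u_n\ne 0$, we may divide by $\|u_n\|^p>0$ to get $1\le\|u_n\|_{L^\infty(B)}^{q_n-p}$; but $\|u_n\|_{L^\infty(B)}<1$ and $q_n-p\ge 1$, so the right-hand side is $<1$, a contradiction. This immediately yields a uniform $\sigma>0$.

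I expect the main subtlety to be purely bookkeeping: making sure the exponent $q_n-p$ is bounded below by a positive number (it is, since $q\ge p+1$ forces $q-p\ge 1$), and handling the borderline case where $\|u_n\|_{L^\infty(B)}$ approaches but does not reach $1$ — this is why the strict inequality $\|u_n\|_{L^\infty}<1$ suffices together with $q-p\ge 1$ to conclude $\|u_n\|_{L^\infty}^{q_n-p}\le\|u_n\|_{L^\infty}<1$. No compactness or deformation argument is needed here; the statement follows from the elementary observation that for a Nehari function the nonlinear term cannot be dominated by the quadratic-type part coming from $\{u<1\}$, which is exactly where the pure power structure $\tilde f_q(s)=s^{q-1}$ on $[0,1]$ and the uniformity in $q$ enter. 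The only place care is required is to confirm that $\int_B u_n^p\,dx\le\|u_n\|^p$ in the chosen equivalent norm, which holds since $\|u\|^p=\|\nabla u\|_{L^p}^p+\|u\|_{L^p}^p\ge\|u\|_{L^p}^p$ when $m=1$.
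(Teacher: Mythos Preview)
Your proof is correct and follows essentially the same approach as the paper: argue by contradiction, take $u_n\in\mathcal N_{q_n}$ with $\|u_n\|_{L^\infty(B)}\to 0$, use that $\tilde f_{q_n}(u_n)=u_n^{q_n-1}$ once $\|u_n\|_{L^\infty}<1$, and compare $u_n^{q_n}$ with $u_n^p$ in the Nehari identity to reach a contradiction. The paper phrases the comparison as $u_n^{q_n}\le(1-\varepsilon)u_n^p$ and concludes $\|u_n\|=0$, whereas you write $\|u_n\|^p\le\|u_n\|_{L^\infty}^{q_n-p}\|u_n\|^p$ and divide through; these are the same argument, and your version is in fact slightly cleaner since it avoids introducing the auxiliary $\varepsilon$.
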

\begin{proof}
Suppose by contradiction that there exist $(q_n)$, with $q_n\ge p+1$ for any $n$, and $(u_n)\subset\mathcal N_{q_n}$ such that $\|u_n\|_{L^\infty(B)}\to 0$ as $n\to\infty$. 
Then, for $n$ sufficiently large $\tilde f_{q_n}(u_n)=u_n^{q_n-1}$ and, being ${q_n}\ge p+1$, there exists $\varepsilon>0$ such that $\tilde f_{q_n}(u_n)u_n=u_n^{q_n}\le(1-\varepsilon)u_n^p$ for every $n$. Therefore, since $u_n\in\mathcal N_{q_n}$, for $n$ large we get 
$$\begin{aligned}
0&=\int_B(|\nabla u_n|^p+u_n^p-u_n^{q_n})dx\ge\int_B(|\nabla u_n|^p+u_n^p-(1-\varepsilon)u_n^p)dx\\
&=\int_B(|\nabla u_n|^p+\varepsilon u_n^p)dx\ge0,
\end{aligned}$$
which is impossible, since $0\not\in\mathcal N_{q_n}$. 
\end{proof}

Let $c_q$ be the mountain pass level corresponding to $q$ as in \eqref{minmax}, that is to say
\begin{equation}
c_q=\inf_{\gamma\in\Gamma_q}\max_{t\in[0,1]} I_q(\gamma(t)),
\end{equation}
where
$$
\Gamma_q:=\left\{ \gamma\in C([0,1];\mathcal C)\ :\  \gamma(0) \in U_{q,-},\: \gamma(1) \in U_{q,+}\right\},
$$
and
\begin{equation}
\begin{aligned} 
U_{q,-} &= \left\{u \in \mathcal C \::\: I_q(u)<\frac{\alpha_q}{2},\:
\|u\|_{L^\infty(B)} < \tau\right\},\\
U_{q,+}&= \left\{u \in \mathcal C \, :\, I_q(u)< 0,\, \|u\|_{L^\infty(B)}>\tau\right\},
\end{aligned}
\end{equation}
with $\tau<\min\{\sigma, 1\}$, $\sigma$ given in Lemma \ref{palla_nella_Nehari}, and $\alpha_q$ is given as in Lemma \ref{sec:case-multiple-fixed-2} for $g(s)=s^{q-1}$.\smallskip

We notice that property \eqref{eq:f/s_increasing} is crucial for the proof of the following lemma.

\begin{lemma}\label{gH} For every $u\in\mathcal C\setminus\{0\}$ there exists a unique $h_q(u)>0$ such that $h_q(u)u\in\mathcal{N}_q$. It holds 
\begin{equation}\label{Iqhq}
I_q(tu)>0 \quad\mbox{for all }t\in(0,h_q(u)].
\end{equation}
Furthermore, if $(u_n)\subset \mathcal C\setminus\{0\}$ is such that $u_n\to u\in \mathcal C\setminus\{0\}$ with respect to the $W^{1,p}$-norm, then $h_q(u_n)\to h_q(u)$. Finally, the map 
$$H:u\in \mathcal C\cap\mathcal S^1\mapsto h_q(u)u\in\mathcal N_q,\quad\mbox{where }\mathcal S^1:= \{u\in W^{1,p}(B)\,:\,\|u\|=1\}$$ is a homeomorphism.
\end{lemma}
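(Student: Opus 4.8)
The plan is to exploit property \eqref{eq:f/s_increasing} to reduce everything to a one-variable monotonicity argument. For fixed $u\in\mathcal C\setminus\{0\}$, consider the function
\[
\varphi_u(t):=\frac{1}{t^{p-1}}\int_B\tilde f_q(tu)u\,dx=\int_B\frac{\tilde f_q(tu)}{t^{p-1}}u\,dx,\qquad t>0.
\]
By \eqref{eq:f/s_increasing}, for each $x$ the integrand $t\mapsto \tilde f_q(tu(x))/t^{p-1}\cdot u(x)$ is nondecreasing (strictly increasing wherever $u(x)>0$), so $\varphi_u$ is strictly increasing on $(0,\infty)$. Moreover $\tilde f_q$ satisfies $(g_1)$, so $\tilde f_q(s)/s^{p-1}\to \ell_0<1$ as $s\to0^+$ with $\ell_0 \in [0,1)$, giving $\varphi_u(t)\to \ell_0\int_B u^p\,dx <\int_B u^p\,dx$ as $t\to0^+$; and by \eqref{eq:subcritical}, $\tilde f_q(s)\sim d\,s^{\ell-1}$ with $\ell>p$, so $\varphi_u(t)\to+\infty$ as $t\to\infty$. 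The condition $h_q(u)u\in\mathcal N_q$ reads exactly $\varphi_u(h_q(u))=\|u\|^p/\int_B u^p\,dx\cdot\int_B u^p\,dx$; more precisely $t^p\varphi_u(t)=\int_B \tilde f_q(tu)(tu)\,dx$ must equal $\|tu\|^p=t^p\|u\|^p$, i.e. $\varphi_u(t)=\|u\|^p/1$—rewriting, $h_q(u)$ is the unique solution of $\varphi_u(t)=\|u\|^p$ when $\|u\|^p>\ell_0\int_B u^p$, which always holds since $\|u\|^p=\|\nabla u\|_p^p+\|u\|_p^p\ge \|u\|_p^p>\ell_0\|u\|_p^p$. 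By strict monotonicity and the limits just computed, such a $t$ exists and is unique; this gives the first assertion.

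For \eqref{Iqhq}, set $g(t):=I_q(tu)$ for $t>0$. Then $g'(t)=I_q'(tu)[u]=t^{-1}I_q'(tu)[tu]=t^{-1}\big(\|tu\|^p-\int_B\tilde f_q(tu)(tu)\,dx\big)=t^{p-1}\big(\|u\|^p-t^p\varphi_u(t)\cdot t^{-p}\cdot t^p\big)$—more directly, $g'(t)=t^{p-1}\|u\|^p - t^{p-1}\varphi_u(t)=t^{p-1}(\|u\|^p-\varphi_u(t))$. Since $\varphi_u$ is strictly increasing with $\varphi_u(h_q(u))=\|u\|^p$, we have $g'(t)>0$ for $t\in(0,h_q(u))$ and $g'(h_q(u))=0$. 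As $g(0^+)=0$, we conclude $g(t)>0$ on $(0,h_q(u)]$, which is \eqref{Iqhq}.

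For the continuity of $h_q$, suppose $u_n\to u$ in $W^{1,p}(B)$ with $u,u_n\in\mathcal C\setminus\{0\}$. By Lemma \ref{apriori}-type bounds (the compact embedding $\mathcal C\hookrightarrow L^\ell(B)$ proved earlier) and \eqref{eq:subcritical}, the map $u\mapsto\int_B\tilde f_q(u)u\,dx$ is continuous on $\mathcal C$; one checks that $t_n:=h_q(u_n)$ stays in a compact subinterval of $(0,\infty)$—boundedness above follows since $\int_B\tilde f_q(t u_n)(tu_n)\,dx$ grows like $t^\ell$ uniformly for $u_n$ near $u\neq0$, and boundedness away from $0$ follows from $(g_1)$ as in Lemma \ref{palla_nella_Nehari}—so any subsequential limit $t_*$ satisfies $\|u\|^p=\int_B\tilde f_q(t_*u)u\,dx\cdot t_*^{1-p}$, i.e. $t_*u\in\mathcal N_q$, whence $t_*=h_q(u)$ by uniqueness; thus the whole sequence converges. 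Finally, $H$ is continuous by the continuity of $h_q$ and of scalar multiplication; its inverse is $v\mapsto v/\|v\|$, which is continuous on $\mathcal N_q$ (note $\|v\|\ge$ const $>0$ on $\mathcal N_q$ by Lemma \ref{palla_nella_Nehari} together with the equivalence of norms on the a priori bounded set); and $H$ and this inverse are mutually inverse because $h_q(v/\|v\|)=\|v\|$ for $v\in\mathcal N_q$ by uniqueness. Hence $H$ is a homeomorphism. The main obstacle is the uniform (in $n$) two-sided control on $h_q(u_n)$ needed for continuity; everything else is a direct consequence of the strict monotonicity granted by \eqref{eq:f/s_increasing}.
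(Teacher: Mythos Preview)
Your argument is correct and follows essentially the same route as the paper: both proofs hinge on the strict monotonicity in $t$ of $t\mapsto \tilde f_q(ts)/t^{p-1}$ from \eqref{eq:f/s_increasing}, which guarantees that the fibering map $t\mapsto I_q(tu)$ has a unique positive critical point (the paper phrases this via $\phi(t)=I_q(tu)$ and its maximum, you via the equivalent equation $\varphi_u(t)=\|u\|^p$), and both deduce continuity of $h_q$ by showing $(h_q(u_n))$ is bounded and that any subsequential limit must satisfy the defining equation for $h_q(u)$. A couple of minor expository points: the sentence rewriting the Nehari condition is garbled before you correct it, and the continuity of $v\mapsto v/\|v\|$ on $\mathcal N_q$ needs only $0\notin\mathcal N_q$, not the norm-equivalence remark.
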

\begin{proof} Fix $u\in\mathcal C\setminus\{0\}$ and consider corresponding map $\phi:t\in[0,\infty)\mapsto I_q(tu)$. Clearly, for all $t>0$, it results that 
\begin{equation}\label{g'}\begin{aligned}\phi'(t)&=0\; \Longleftrightarrow\; I_q'(tu)[u]=0\;\Longleftrightarrow\;\|u\|^p=\frac1{t^{p-1}}\int_B\tilde{f}_q(tu)udx.
\end{aligned}\end{equation}
Furthermore, $\phi(0)=0$ and, since $1<p<\ell<q$,
\begin{equation}\label{g<>0}
\begin{aligned}
\phi(t)&=\frac{t^p}p\|u\|^p-\frac{t^q}q\|u\|^q_{L^q(B)}>0\quad\mbox{for $t>0$ small,}\\
\phi(t)&=\frac{t^p}p\|u\|^p+t\frac{q-\ell}{\ell-1}s_0^{q-1}\|u\|_{L^1(B)}-t^\ell\frac{q-1}{\ell(\ell-1)}s_0^{q-\ell}\|u\|_{L^\ell(B)}<0\quad\mbox{for $t$ large.} 
\end{aligned}
\end{equation}
Therefore, by the continuity of $\phi$, there exists $h_q(u)\in(0,\infty)$ such that $$\phi(h_q(u))=\max_{t\in[0,\infty)}\phi(t)$$ and consequently, $\phi'(h_q(u))=0$. We can prove that the maximum point $h_q(u)$ is the unique non-zero critical point of $\phi$. Indeed, suppose by contradiction that $\phi$ admits another critical point $0<\bar h\neq h_q(u)$, then by \eqref{g'}
$$\|u\|^p=\frac1{\bar h^{p-1}}\int_B\tilde f_q(\bar h u)udx=\frac1{h_q(u)^{p-1}}\int_B \tilde f_q(h_q(u) u)udx,$$
which contradicts \eqref{eq:f/s_increasing}, being $\tilde f_q(ts)/t^{p-1}$ strictly increasing in $t$, for all $s>0$. Thus, $h_q(u)$ is unique and \eqref{Iqhq} holds. Furthermore, $H$ is well defined. 

Now, let $(u_n)\subset\mathcal C\setminus\{0\}$, $u_n\to u\in\mathcal C\setminus\{0\}$. 
Suppose by contradiction that the corresponding sequence $(h_q(u_n))$ is unbounded. Then, 
$$\|u_n\|^p=\frac1{h_q(u_n)^{p-1}}\int_B\tilde f_q(h_q(u_n)u_n)u_ndx\nearrow\infty\quad\mbox{as }n\to\infty,$$ 
by \eqref{eq:f/s_increasing}. This contradicts the fact that $(u_n)$ is convergent. Hence, $(h_q(u_n))$ is bounded and we can find a subsequence, still indexed by $n$, for which $h_q(u_n)\to \bar h$. For the Dominated Convergence Theorem we obtain
$$\|u_n\|^p=\frac1{h_q(u_n)^{p-1}}\int_B\tilde f_q(h_q(u_n)u_n)u_ndx\to \frac1{\bar h^{p-1}}\int_B\tilde f_q(\bar h u)udx.$$ 
By the uniqueness of the limit, this yields
$$\frac1{\bar h^{p-1}}\int_B\tilde f_q(\bar hu)udx=\|u\|^p,$$
that is $\bar h=h_q(u)$ and in particular $H$ is continuous. 

Finally, the continuous map $v\in\mathcal N_q
\mapsto v/\|v\|\in\mathcal C\cap\mathcal S^1$ is the inverse of $H$, by the uniqueness of $h_q(u)$ and by the fact that $h_q(u)=1$ if and only if $u\in\mathcal N_q$.
\end{proof}

The preceding lemma allows to prove that the mountain pass level in the cone coincides with a Nehari-type level in the cone. 

\begin{lemma}\label{lemma:nehari}
The following equalities hold
\begin{equation}\label{c_uguali}
c_q=\inf_{u\in\mathcal C\setminus\{0\}}\sup_{t\ge0}I_q(tu)=\inf_{u\in\mathcal N_q}I_q(u).
\end{equation}
\end{lemma}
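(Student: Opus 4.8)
The plan is to establish the two equalities in \eqref{c_uguali} by a chain of inequalities, using the homeomorphism $H$ from Lemma \ref{gH} to pass between the mountain pass characterization and the Nehari characterization, and then proving equality everywhere. First I would dispose of the easy middle identity: for each fixed $u\in\mathcal C\setminus\{0\}$, Lemma \ref{gH} shows $\phi(t)=I_q(tu)$ has a unique strictly positive critical point $h_q(u)$ which is its global maximum on $[0,\infty)$, and that $h_q(u)u\in\mathcal N_q$; conversely every element of $\mathcal N_q$ is of this form with $h_q=1$. Hence $\sup_{t\ge0}I_q(tu)=I_q(h_q(u)u)$ and, ranging over $u$, $\inf_{u\in\mathcal C\setminus\{0\}}\sup_{t\ge0}I_q(tu)=\inf_{v\in\mathcal N_q}I_q(v)$. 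Call this common value $m_q$.

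Next I would show $c_q\ge m_q$. Given any $\gamma\in\Gamma_q$, I claim the path must cross $\mathcal N_q$. Indeed $\gamma(0)\in U_{q,-}$ satisfies $\|\gamma(0)\|_{L^\infty}<\tau<\sigma$, so by Lemma \ref{palla_nella_Nehari} $\gamma(0)\notin\mathcal N_q$; more precisely, writing $\gamma(0)=t_0 v_0$ with $v_0\in\mathcal C\cap\mathcal S^1$, the smallness of $\gamma(0)$ forces $h_q(v_0)>\|\gamma(0)\|$, i.e. $\gamma(0)$ lies ``below'' the Nehari set in the sense that $I_q'(\gamma(0))[\gamma(0)]>0$ (using \eqref{Iqhq}/\eqref{g<>0}: for $t$ small $\phi'(t)>0$). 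On the other side, $\gamma(1)\in U_{q,+}$ has $I_q(\gamma(1))<0$, while $I_q(tu)>0$ for $0<t\le h_q(u)$ by \eqref{Iqhq}; hence $\gamma(1)$ lies ``above'' $\mathcal N_q$, i.e. $I_q'(\gamma(1))[\gamma(1)]<0$. Since $u\mapsto I_q'(u)[u]$ is continuous along $\gamma$, there is $\bar t$ with $\gamma(\bar t)\in\mathcal N_q$, so $\max_t I_q(\gamma(t))\ge I_q(\gamma(\bar t))\ge\inf_{\mathcal N_q}I_q=m_q$. Taking the infimum over $\gamma$ gives $c_q\ge m_q$.

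For the reverse inequality $c_q\le m_q$, I would use the homeomorphism $H$ to build, from any $u\in\mathcal C\setminus\{0\}$, an admissible path realizing $\sup_{t\ge0}I_q(tu)$. Fix $u\in\mathcal C\setminus\{0\}$ with $\|u\|=1$. By \eqref{g<>0} the ray $t\mapsto tu$ eventually has $I_q(tu)<0$ and $\|tu\|_{L^\infty}>\tau$, so for $t=T$ large, $Tu\in U_{q,+}$; for $t=\varepsilon$ small, $\varepsilon u\in U_{q,-}$ (here $I_q(\varepsilon u)>0$ but less than $\alpha_q/2$ and $\|\varepsilon u\|_{L^\infty}<\tau$, by continuity). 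The segment $\gamma(s):=((1-s)\varepsilon+sT)u$ stays in $\mathcal C$ (a cone), so $\gamma\in\Gamma_q$, and $\max_s I_q(\gamma(s))\le\sup_{t\ge0}I_q(tu)$. Hence $c_q\le\sup_{t\ge0}I_q(tu)$ for every $u$; taking the infimum yields $c_q\le m_q$. Combining the two inequalities gives \eqref{c_uguali}.

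The main obstacle I anticipate is the crossing argument in the step $c_q\ge m_q$: one must be careful that an arbitrary $\gamma\in\Gamma_q$ — which need not pass through the ``scalar rays'' — still intersects $\mathcal N_q$. The clean way is to work with the continuous functional $u\mapsto I_q'(u)[u]=\|u\|^p-\int_B\tilde f_q(u)u\,dx$ and show it is strictly positive at $\gamma(0)$ and strictly negative at $\gamma(1)$, then invoke the intermediate value theorem; positivity at $\gamma(0)$ follows from $\|\gamma(0)\|_{L^\infty}<\sigma$ together with the fact (used in Lemma \ref{palla_nella_Nehari}) that $\tilde f_{q}(s)s\le(1-\varepsilon)s^p$ for small $s$, and negativity at $\gamma(1)$ follows from $I_q(\gamma(1))<0$ combined with \eqref{Iqhq}, which forces $h_q(\gamma(1)/\|\gamma(1)\|)<\|\gamma(1)\|$ and hence $I_q'(\gamma(1))[\gamma(1)]<0$. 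Everything else is routine once the homeomorphism $H$ and property \eqref{eq:f/s_increasing} are in hand.
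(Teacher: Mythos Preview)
Your proposal is correct and follows essentially the same three-step scheme as the paper: the middle identity via Lemma~\ref{gH}, the inequality $c_q\le m_q$ via ray paths, and $c_q\ge m_q$ via a crossing argument showing every $\gamma\in\Gamma_q$ meets $\mathcal N_q$. The only substantive difference is cosmetic: for the crossing you track the sign of $I_q'(u)[u]$, while the paper tracks whether $h_q(\gamma(t))$ is above or below $1$ and invokes the continuity of $h_q$ from Lemma~\ref{gH}; these are equivalent since $h_q(u)>1\Leftrightarrow I_q'(u)[u]>0$.

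One small point to tighten: your claim that $I_q'(\gamma(0))[\gamma(0)]>0$ fails if $\gamma(0)\equiv 0$, which is allowed since $0\in U_{q,-}$. The paper handles this by moving to a nearby point $\bar t\in(0,1)$ where $\|\gamma(\bar t)\|_{L^\infty}=\varepsilon\in(0,\tau)$ and arguing there; your sign argument works verbatim at such a $\bar t$, so the fix is immediate.
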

\begin{proof} 
We shall split the proof of \eqref{c_uguali} into three steps.

{\it Step 1.} We first prove that $\inf_{u\in\mathcal C\setminus\{0\}}\sup_{t\ge0}I_q(tu)=\inf_{u\in\mathcal N_q}I_q(u)$. From Lemma \ref{gH}, we know that 
$$\inf_{u\in\mathcal C\setminus\{0\}}\sup_{t\ge0}I_q(tu)=\inf_{u\in\mathcal C\setminus\{0\}}I_q(h_q(u)u)\ge \inf_{u\in\mathcal N_q}I_q(u)$$
being $h_q(u)u\in\mathcal N_q$. On the other hand, 
$$\inf_{u\in\mathcal C\setminus\{0\}}\sup_{t\ge0}I_q(tu)\le\inf_{u\in\mathcal C\cap\mathcal S^1}\sup_{t\ge0}I_q(tu)=\inf_{u\in\mathcal C\cap\mathcal S^1}I_q(h_q(u)u)=\inf_{u\in\mathcal N_q}I_q(u),$$
where we have used the fact that $H$ defines a homeomorphism between $\mathcal C\cap\mathcal S^1$ and $\mathcal N_q$.\smallskip

{\it Step 2.} Now we prove that $c_q\le \inf_{u\in\mathcal C\setminus\{0\}}\sup_{t\ge0}I_q(tu)$. Indeed, for all $u\in\mathcal C\setminus\{0\}$, by \eqref{g<>0}, there exists $\bar t_u$ so large that $I_q(\bar t_u u)<0$ and $\|\bar t_u u\|_{L^\infty(B)}>\tau$. Hence, we can consider the curve $\gamma: t\in[0,1]\mapsto t\bar t_u u\in\mathcal C$. Clearly $\gamma\in\Gamma_q$, so that we get 
$$c_q\le\inf_{u\in\mathcal C\setminus\{0\}}\max_{t\in[0,1]}I_q(t\bar t_u u)\le  \inf_{u\in\mathcal C\setminus\{0\}}\sup_{t\ge0}I_q(t\bar t_u u)=\inf_{u\in\mathcal C\setminus\{0\}}\sup_{t\ge0}I_q(tu).$$
\smallskip

{\it Step 3.} Finally we show that $c_q\ge \inf_{u\in\mathcal N_q}I_q(u)$. Let $\gamma$ be any curve in $\Gamma_q$, we claim that $\gamma([0,1])\cap\mathcal N_q\neq\emptyset$. If the claim holds true, we know that for any $\gamma\in \Gamma_q$ there exists $t_\gamma\in[0,1]$ such that $\gamma(t_\gamma)\in\mathcal N_q$, and so we can conclude that
$$c_q\ge\inf_{\gamma\in\Gamma_q} I_q(\gamma(t_\gamma))\ge\inf_{u\in\mathcal N_q}I_q(u).$$
It remains to prove the claim. Pick any $\gamma\in\Gamma_q$, then $\|\gamma(0)\|_{L^\infty(B)}<\tau<\sigma$, with $\sigma$ given in Lemma \ref{palla_nella_Nehari}. If $\gamma(0)\not\equiv 0$, by Lemma \ref{gH} we know that there exists a unique $h_q(\gamma(0))>0$ such that $h_q(\gamma(0))\gamma(0)\in\mathcal N_q$. Hence, together with Lemma \ref{palla_nella_Nehari} we obtain
$$\sigma h_q(\gamma(0))>\tau h_q(\gamma(0))>\|h_q(\gamma(0))\gamma(0)\|_{L^\infty(B)}\ge \sigma,$$
so that $h_q(\gamma(0))>1$. 
If $\gamma(0)\equiv 0$, let $\varepsilon\in(0,\tau)$. By the continuity of $\gamma$ in the $L^\infty$-norm (see Lemma \ref{bounded}) and the fact that $\|\gamma(1)\|_{L^\infty(B)}>\tau$, there exists $\bar t\in(0,1)$ such that $\|\gamma(\bar t)\|_{L^{\infty}(B)}=\varepsilon$. Then, by Lemma \ref{gH}, there is a unique $h_q(\gamma(\bar t))$ for which $h_q(\gamma(\bar t))\gamma(\bar t)\in\mathcal N_q$. Proceding as in the case $\gamma(0)\not\equiv 0$, by replacing 0 by $\bar t$, we get immediately that $h_q(\gamma(\bar t))>1$.

Furthermore, by \eqref{Iqhq}, $I_q(t\gamma(1))>0$ for all $t\in(0,h_q(\gamma(1))]$. Suppose by contradiction that $h_q(\gamma(1))\ge1$, then $I_q(\gamma(1))>0$, but this is absurd, being $\gamma\in\Gamma_q$. 

In conclusion, we have that there exists $\bar t\in[0,1)$ for which $h_q(\gamma(\bar t))>1$, and that $h_q(\gamma(1))<1$. By the continuity of $h_q$ proved in Lemma \ref{gH}, and the continuity of $\gamma$, there exists $t_\gamma\in(\bar t,1)$ for which $h_q(\gamma(t_\gamma))=1$, that is $\gamma(t_\gamma)\in\mathcal N_q$.   
\end{proof}

By Theorem \ref{thm:main} there exists a nonconstant, nondecreasing, radial solution $u_q$ of \eqref{eq:pure_power}, which by Proposition \ref{mountainpass} can be caracterized as a mountain pass solution, that is to say 
\begin{equation}\label{eq:u_q_def}
c_q=I_q(u_q) \quad\textrm{and}\quad I_q'(u_q)=0.
\end{equation}
We shall now provide some a priori bounds on $u_q$, uniform in $q$.
 
\begin{lemma}\label{commonbound} There exists $C>0$ independent of $q$ such that, for all $q\ge p+1$,
$$\|u_q\|_{C^1(\bar B)}\le C.$$ 
\end{lemma}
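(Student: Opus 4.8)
The plan is to read the bound off the radial ODE, exploiting the two features highlighted in the introduction: $u_q\in\mathcal C$ is nondecreasing, and the nonlinearity is the pure power $s^{q-1}$. For each fixed $q$ the solution $u_q$ belongs to $C^1(\bar B)$ by the regularity theory already invoked in the paper (see \cite[Theorem 2]{Lieberman}); the only issue is to produce a bound independent of $q$.

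First I would observe that $u_q(0)<1$. Testing the weak formulation of \eqref{eq:pure_power} with the constant function $1$ gives $\int_B u_q^{p-1}\,dx=\int_B u_q^{q-1}\,dx$. If instead $u_q(0)\ge 1$, then $u_q\ge 1$ in $B$ by monotonicity, hence $u_q^{q-1}\ge u_q^{p-1}$ pointwise with equality only where $u_q=1$; the integral identity would then force $u_q\equiv 1$, contradicting the nonconstancy of the mountain pass solution $u_q$. Set $r^*:=\sup\{r\in[0,1]:u_q(r)<1\}\in(0,1]$, so that $u_q\le 1$ on $[0,r^*]$ (whence $0\le u_q^{p-1}-u_q^{q-1}\le 1$ there) and $u_q\ge 1$ on $[r^*,1]$ (whence $u_q^{p-1}-u_q^{q-1}\le 0$ there).

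The core step is the gradient estimate. Since $u_q'\ge 0$, the integrated form of the radial equation reads
\[
r^{N-1}\big(u_q'(r)\big)^{p-1}=\int_0^r t^{N-1}\big(u_q(t)^{p-1}-u_q(t)^{q-1}\big)\,dt ,
\]
the boundary contribution at $r=0$ being zero since $r^{N-1}\to 0$ and $u_q'$ is bounded. For $r\le r^*$ the integrand lies in $[0,1]$, so the right-hand side is $\le r^N/N$ and $u_q'(r)^{p-1}\le r/N\le 1/N$. For $r>r^*$ I split the integral at $r^*$: the part over $(r^*,r)$ is $\le 0$ by the sign discussion above, while the part over $(0,r^*)$ is $\le (r^*)^N/N$; since $r^{N-1}\ge (r^*)^{N-1}$ this gives $u_q'(r)^{p-1}\le (r^*)^N/(N r^{N-1})\le 1/N$ as well. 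Hence $0\le u_q'(r)\le N^{-1/(p-1)}$ for every $r\in[0,1]$, with a bound independent of $q$.

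Finally, as $u_q$ is radial and nondecreasing, $\|\nabla u_q\|_{L^\infty(B)}=\|u_q'\|_{L^\infty(0,1)}\le N^{-1/(p-1)}$, and $\|u_q\|_{L^\infty(B)}=u_q(1)=u_q(0)+\int_0^1 u_q'(r)\,dr<1+N^{-1/(p-1)}$ by the first step; therefore $\|u_q\|_{C^1(\bar B)}<1+2\,N^{-1/(p-1)}=:C$, independent of $q\ge p+1$. I do not anticipate a real obstacle: the only thing that needs to be seen is that the term $-u_q^{q-1}$, which is the only one capable of growing with $q$, has a favourable sign exactly on $\{u_q\ge 1\}$, so it never contributes positively to $r^{N-1}(u_q')^{p-1}$, while on $\{u_q<1\}$ the entire right-hand side of the ODE is trivially bounded by $1$.
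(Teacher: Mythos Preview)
Your argument is correct. It is, however, a genuinely different route from the paper's.

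The paper multiplies the radial equation by $u_q'$ to obtain a Hamiltonian-type quantity
\[
L_q(r)=\frac{p-1}{p}(u_q')^p-\frac{u_q^p}{p}+\frac{u_q^q}{q},
\]
which is nonincreasing in $r$ because the friction term $-\tfrac{N-1}{r}(u_q')^p$ is nonpositive. Since $u_q(0)<1$ gives $L_q(0)\le 0$, the pair $(u_q(r),u_q'(r))$ stays in the sublevel set $\{L_q\le 0\}$, from which one reads off the sharp bounds $u_q\le (q/p)^{1/(q-p)}\to 1$ and $u_q'\le\bigl(\tfrac{q-p}{q(p-1)}\bigr)^{1/p}\to p^{-1/p}$.

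You instead integrate the equation once and exploit directly the sign of $u_q^{p-1}-u_q^{q-1}$ on the two subintervals determined by the level $u_q=1$. This is arguably more elementary (no energy identity, just a pointwise sign argument on the first integral), and it yields the explicit $q$-free constants $u_q'\le N^{-1/(p-1)}$ and $u_q\le 1+N^{-1/(p-1)}$. The paper's method, by contrast, gives constants that \emph{converge} as $q\to\infty$ (in particular $\limsup_q\|u_q\|_{L^\infty}\le 1$), which morally anticipates the later fact $u_\infty\le 1$; but since the paper proves $u_\infty(1)=1$ separately by contradiction anyway, this sharpness is not actually exploited downstream. Either approach fully proves the lemma.
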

\begin{proof}
By integrating the equation satisfied by $u_q$, we get
\[
\int_B u_q^{p-1}(1-u_q^{q-p}) dx =0.
\]
Since $u_q\not\equiv1$ is positive and nondecreasing, we deduce that
\begin{equation}\label{eq:u(0)}
u_q(0)<1, \quad u_q(1)>1\quad\mbox{for all }q\ge p+1.
\end{equation}

Consider the equation satisfied by $u_q$ in radial form. We multiply it by $u_q'\ge 0$ to obtain
\[
\left( \frac{p-1}{p} (u_q')^p +\frac{u_q^q}{q} - \frac{u_q^p}{p} \right)'
=-\frac{N-1}{r} (u_q')^p.
\]
We deduce that the function
\begin{equation}
L_q(r):=\frac{p-1}{p} (u_q'(r))^p - \frac{u_q(r)^p}{p} +\frac{u_q(r)^q}{q},\quad r\in[0,1]
\end{equation}
is nonincreasing in $r$, and hence, using \eqref{eq:u(0)},
\[
L_q(r)\leq L_q(0) = -\frac{u_q(0)^p}{p}+\frac{u_q(0)^q}{q} \leq 0\quad\mbox{for all } r\in[0,1].
\]
We note that $L_q(r)\le 0$ is equivalent to 
$$(u_q(r), u'_q(r))\in\Sigma:=\left\{(x,y)\in\mathbb R^2\,:\,x\ge0,\, 0\le y\le \left[\frac{p}{p-1}\left(\frac{x^p}p-\frac{x^q}q\right)\right]^{1/p} \right\}.$$

\begin{figure}
\centering
\includegraphics[scale=0.8]{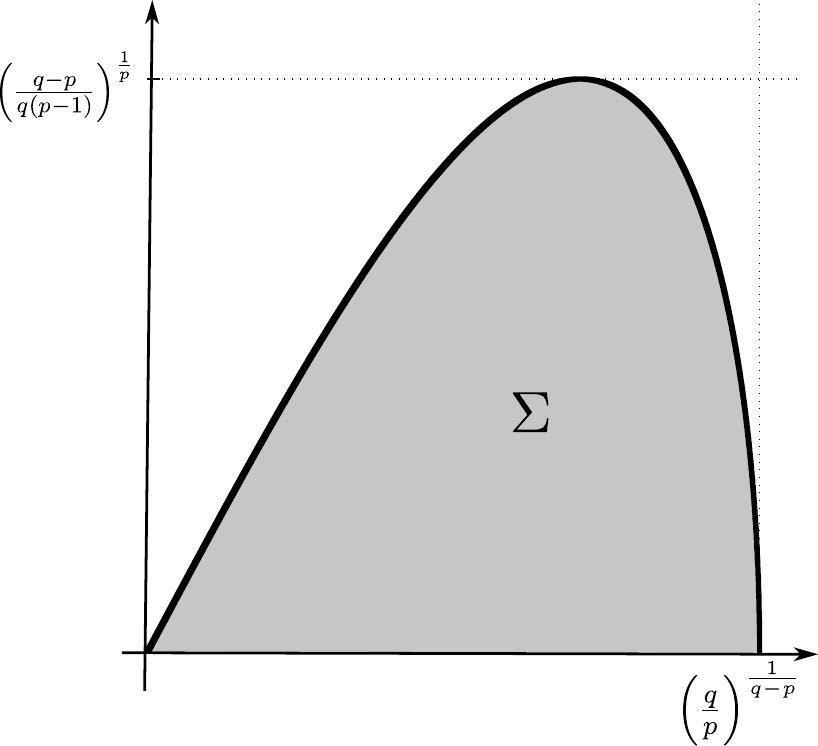}
\caption{The graphic of the function $y=\left[\frac{p}{p-1}\left(\frac{x^p}p-\frac{x^q}q\right)\right]^{1/p}$ for $x,\,y\ge0$.}\label{fig:1}
\end{figure}
This implies (see Figure \ref{fig:1})
\[
u_q \leq \left(\frac{q}{p}\right)^\frac{1}{q-p} \to 1 \quad \textrm{as } q\to\infty,
\]
\[
u_q' \leq \left(\frac{q-p}{q(p-1)}\right)^\frac{1}{p} \to p^{-\frac{1}{p}} \quad \textrm{as } q\to\infty. \qedhere
\]
\end{proof}

The previous a priori bounds ensure the existence of a limit profile.

\begin{lemma}\label{weakconv} 
There exists a function $u_\infty\in \mathcal C$ for which 
$$u_q\rightharpoonup u_\infty\;\mbox{ in }W^{1,p}(B), \quad u_q\to u_\infty \;\mbox{ in }C^{0,\nu}(\bar B) \quad\mbox{as }q\to\infty,$$
for any $\nu\in(0,1)$. Furthermore, $u_\infty(1)=1$. 
\end{lemma}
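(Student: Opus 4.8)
The plan is to obtain the limit profile $u_\infty$ from the $q$-uniform $C^1$ bound of Lemma~\ref{commonbound}, together with the pointwise estimates derived in its proof. Concretely, I would run a compactness argument to produce $u_\infty$ and the two convergences, and then read off the boundary trace $u_\infty(1)=1$ from the inequalities $u_q(1)>1$ and $u_q\le (q/p)^{1/(q-p)}$ already available.

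Since $\|u_q\|_{C^1(\bar B)}\le C$ uniformly in $q$, the family $(u_q)$ is bounded in $W^{1,p}(B)$, so by reflexivity, along a subsequence $q_n\to\infty$ (not relabeled), $u_q\rightharpoonup u_\infty$ in $W^{1,p}(B)$ for some $u_\infty\in W^{1,p}_{\mathrm{rad}}(B)$. The same $C^1$ bound makes $(u_q)$ equibounded and uniformly Lipschitz on $\bar B$, hence by the Arzel\`a--Ascoli theorem, up to a further subsequence, $u_q\to u_\infty$ uniformly on $\bar B$ (the uniform limit must coincide with the weak $W^{1,p}$ limit, both being a.e.\ limits of a common subsequence). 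To upgrade this to $C^{0,\nu}(\bar B)$-convergence for every $\nu\in(0,1)$, I would apply the elementary interpolation inequality
$$[w]_{C^{0,\nu}(\bar B)}\le \bigl(2\|w\|_{L^\infty(B)}\bigr)^{1-\nu}\,[w]_{C^{0,1}(\bar B)}^{\nu}$$
to $w=u_q-u_\infty$: since $[u_q-u_\infty]_{C^{0,1}(\bar B)}\le 2C$ and $\|u_q-u_\infty\|_{L^\infty(B)}\to 0$, the right-hand side tends to $0$, whence $u_q\to u_\infty$ in $C^{0,\nu}(\bar B)$.

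It remains to check that $u_\infty\in\mathcal C$ and that $u_\infty(1)=1$. For the former, nonnegativity, radial symmetry and radial monotonicity each pass to the uniform limit of the functions $u_q\in\mathcal C$, and $u_\infty\in W^{1,p}_{\mathrm{rad}}(B)$ has already been recorded; equivalently, $\mathcal C$ is closed and convex, hence weakly closed in $W^{1,p}(B)$. For the latter, the two estimates obtained in the proof of Lemma~\ref{commonbound} give $1<u_q(1)\le (q/p)^{1/(q-p)}$ for all $q\ge p+1$, and since $(q/p)^{1/(q-p)}\to 1$ as $q\to\infty$ we get $u_q(1)\to 1$; by the uniform convergence just proved, $u_\infty(1)=\lim_{q\to\infty}u_q(1)=1$.

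I do not anticipate a genuine obstacle here: the uniform $C^1$ estimate of Lemma~\ref{commonbound} carries all the content, and the rest is standard. The two points deserving care are the interpolation step, which is what yields $C^{0,\nu}$ rather than merely $C^0$ convergence, and the subsequence bookkeeping: at this stage $u_\infty$ is only a subsequential limit, and its identification with the unique solution $G$ of \eqref{eqforG} — which will in particular force convergence of the entire family — is deferred to the subsequent lemmas.
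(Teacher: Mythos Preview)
Your argument is correct. The compactness step is essentially the paper's: the paper invokes the compact embedding $C^1\hookrightarrow C^{0,\nu}$, while you unpack this via Arzel\`a--Ascoli plus the elementary $C^{0,\nu}$--$C^{0,1}$ interpolation; these amount to the same thing.

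Where you genuinely diverge is in proving $u_\infty(1)=1$. You read it off directly from the sandwich $1<u_q(1)\le (q/p)^{1/(q-p)}\to 1$, both bounds already obtained inside the proof of Lemma~\ref{commonbound}. The paper instead only uses $u_q(1)>1$ to get $u_\infty(1)\ge 1$, and then rules out $u_\infty(1)>1$ by contradiction: if $u_\infty(1)>1$, uniform convergence gives $u_q\ge 1+\delta$ on some annulus $\{s\le r\le 1\}$ for large $q$, and integrating the radial equation on $(s,1)$ forces $u_q'(s)\to\infty$, contradicting the $C^1$ bound. Your route is shorter and more elementary, since it simply harvests the pointwise $L^\infty$ estimate already established; the paper's contradiction argument is self-contained in the sense that it does not reach back into the proof of the previous lemma, but it is clearly more work. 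Either way the conclusion is the same.
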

\begin{proof}
The existence of $u_\infty$ and the convergence are consequences of the previous lemma, together with the compactness of the embedding $C^1\hookrightarrow C^{0,\nu}$. Since, up to a subsequence, $u_q \to u_\infty$ pointwise, we deduce that $u_\infty \in \mathcal{C}$. From \eqref{eq:u(0)} we immediately get that $u_\infty(1)\geq 1$.

It only remains to show that $u_\infty(1)= 1$. To this aim, suppose by contradiction that $u_\infty(1)> 1$. Then there exist $s\in(0,1)$ and $\delta>0$ such that
\begin{equation}\label{eq:u_infty(1)_contrad}
u_q(r)\geq 1+\delta\quad \textrm{ for every } s\leq r\leq 1,
\end{equation}
and for every $q$ sufficiently large.
We integrate the equation satisfied by $u_q$ in the interval $(s,1)$
\[
s^{N-1} (u_q'(s))^{p-1} = \int_s^1 u_q^{p-1} (u_q^{q-p}-1) r^{N-1} dr
\]
and we replace \eqref{eq:u_infty(1)_contrad} to obtain
\[
s^{N-1} (u_q'(s))^{p-1} \geq \int_s^1 (1+\delta)^{p-1} ((1+\delta)^{q-p}-1) r^{N-1} dr \to +\infty 
\]
as $q\to\infty$, in contradiction with Lemma \ref{commonbound}.
\end{proof}

\begin{lemma}\label{cinfinito}
The quantity
$$
c_\infty:=\inf\left\{\frac{\|v\|^p}p\,:\,v\in\mathcal C,\,v=1\mbox{ on }\partial B\right\}
$$
is achieved by the unique radial function $G$ satisfying \eqref{eqforG}.
\end{lemma}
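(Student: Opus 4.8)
The plan is to establish existence, uniqueness, and the identification with $G$ in three separate moves, all of which are by now fairly standard for $p$-Laplacian problems with Dirichlet data on the boundary.

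\emph{Existence via the direct method.} First I would show that the admissible set
$\mathcal{A}:=\{v\in\mathcal C\,:\,v=1\text{ on }\partial B\}$
is nonempty (it contains the constant $1$), closed and convex in $W^{1,p}(B)$: closedness follows because the trace operator is continuous and $\mathcal C$ is closed (property (iv) of $\mathcal C$), and convexity is clear. Taking a minimizing sequence $(v_n)\subset\mathcal A$, the quantity $\|v_n\|^p$ is bounded, so up to a subsequence $v_n\rightharpoonup G$ in $W^{1,p}(B)$; since $\mathcal A$ is convex and closed, it is weakly closed, hence $G\in\mathcal A$. The functional $v\mapsto\|v\|^p=\int_B(|\nabla v|^p+|v|^p)\,dx$ is convex and strongly continuous, therefore weakly lower semicontinuous, so $\|G\|^p\le\liminf_n\|v_n\|^p$ and $G$ is a minimizer, i.e. $\frac{1}{p}\|G\|^p=c_\infty$.

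\emph{Euler--Lagrange equation and uniqueness.} Next I would derive the equation solved by $G$. For any radial $\varphi\in C^\infty_c(B)$ and small $|t|$, the function $G+t\varphi$ need not lie in $\mathcal C$; however, because $G$ is radially nondecreasing, one can argue that near any point where $G$ is not locally constant the monotonicity constraint is inactive, so $G$ satisfies $-\Delta_p G+G^{p-1}=0$ weakly there, and on any flat piece the constraint and the sign of the measure $-\Delta_p G + G^{p-1}$ can be analysed as in the proof of Lemma \ref{cononelcono}. In fact the cleanest route is to observe that the unconstrained minimiser of $\frac1p\|v\|^p$ over $\{v\in W^{1,p}(B):v=1\text{ on }\partial B\}$ is exactly the solution $G$ of \eqref{eqforG}, which is unique by strict convexity of $v\mapsto\|v\|^p$; this $G$ is radial by uniqueness and rotational invariance of the problem, and it is radially nondecreasing because, by the maximum principle / Hopf's lemma applied to $-\Delta_p G+G^{p-1}=0$ with boundary value $1$, one has $0<G<1$ in $B$ and $\partial_\nu G>0$ on $\partial B$, and more precisely a comparison argument (as in Lemma \ref{cononelcono}, testing with $(G(|x|)-G(r))^\pm$) shows $G$ is nondecreasing in $|x|$. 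Hence $G\in\mathcal C$, so $G\in\mathcal A$, and therefore the constrained infimum $c_\infty$ equals the unconstrained one and is attained at the same unique function $G$. Uniqueness of the constrained minimiser then follows from the strict convexity of $\|\cdot\|^p$: if $G_1,G_2\in\mathcal A$ both attained $c_\infty$, then $\frac{G_1+G_2}{2}\in\mathcal A$ and $\|\tfrac{G_1+G_2}{2}\|^p<\tfrac12\|G_1\|^p+\tfrac12\|G_2\|^p=pc_\infty$ unless $G_1=G_2$, a contradiction.

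\emph{Main obstacle.} The delicate point is precisely the verification that the Dirichlet minimiser $G$ of \eqref{eqforG} is monotone in $|x|$, i.e. belongs to $\mathcal C$, so that restricting the minimisation to the cone does not lower the infimum. I would handle it exactly with the localized comparison/test-function technique already used in the proof of Lemma \ref{cononelcono}: fixing $r\in(0,1)$ and testing the equation for $G$ on $B_r$ with $(G(|x|)-G(r))^+$, respectively on $B\setminus B_r$ with $(G(|x|)-G(r))^-$, and using $G^{p-1}\ge0$ together with the strict monotonicity of $s\mapsto s^{p-1}$, forces one of the two alternatives $(a)$, $(b)$ of that lemma to hold at every $r$, which gives monotonicity. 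Standard regularity (\cite[Theorem 2]{Lieberman}) gives $G\in C^{1,\alpha}(\bar B)$, legitimising these manipulations, and Hopf's lemma (\cite[Theorem 3.3]{damascelli1999symmetry}) gives $\partial_\nu G>0$, which will be needed later for the optimality of the $C^{0,\nu}$ convergence in Theorem \ref{thm:asymptotic_q}.
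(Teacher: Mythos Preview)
Your proposal is correct and follows essentially the same route as the paper: both reduce to the unconstrained Dirichlet problem, identify its unique minimiser as $G$, and then show $G\in\mathcal C$ so that the constrained and unconstrained infima coincide. The only notable difference is in the monotonicity step: you propose the localized test-function argument of Lemma~\ref{cononelcono}, whereas the paper simply integrates the radial form of \eqref{eqforG} from $0$ to $r$ to obtain
\[
r^{N-1}|G'(r)|^{p-2}G'(r)=\int_0^r t^{N-1}G(t)^{p-1}\,dt>0,
\]
which gives $G'>0$ in one line once $G>0$ and $G'(0)=0$ are known. Your argument is valid but heavier than needed here; the ODE integration is the more economical choice.
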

\begin{proof} 
By the Direct Method of the Calculus of Variations, 
$$c'_\infty:=\inf\left\{\frac{\|v\|^p}p\,:\,v\in W^{1,p}(B),\,v=1\mbox{ on }\partial B\right\}
$$ 
is uniquely achieved by $G$. Let us prove that $c_\infty=c_\infty'$. Clearly, $c'_\infty\le c_\infty$. 
On the other hand, by the comparison principle, $G>0$ in $B$, and by the radial symmetry $G'(0)=0$. If we integrate the equation in \eqref{eqforG} in its radial form, we get
$$r^{N-1}|G'(r)|^{p-2}G'(r)=\int_0^r t^{N-1}G(t)^{p-1}dt>0.$$
Hence, $G\in\mathcal C$ and so $c_\infty\le c'_\infty$. This concludes the proof.  
\end{proof}

\begin{lemma} \label{lemma:c_infty<c_q} It holds
$c_\infty\le\liminf_{q\to\infty}c_q$.
\end{lemma}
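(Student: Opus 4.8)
The plan is to compute $c_q$ explicitly in terms of $\|u_q\|$, and then pass to the limit using the weak convergence $u_q\rightharpoonup u_\infty$ from Lemma~\ref{weakconv} together with weak lower semicontinuity of the norm.

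First I would observe that, by Lemma~\ref{apriori}, every solution $u_q\in\mathcal C$ of \eqref{eq:pure_power} satisfies $\|u_q\|_{L^\infty(B)}\le K_\infty<s_0$, where $s_0$ is the truncation threshold fixed in Lemma~\ref{lemma:truncated2}; hence, by the explicit form \eqref{truncatedpurepower} of $\tilde f_q$, we have $\tilde f_q(u_q)=u_q^{q-1}$ and $\tilde F_q(u_q)=u_q^q/q$ pointwise in $B$. Recalling from \eqref{eq:u_q_def} that $u_q$ is a critical point of $I_q$ with $I_q(u_q)=c_q$, and that here $m=1$, so $\|u\|^p=\int_B(|\nabla u|^p+|u|^p)\,dx$, testing $I_q'(u_q)=0$ against $u_q$ gives the identity $\|u_q\|^p=\int_B u_q^q\,dx$, and therefore
\[
c_q=I_q(u_q)=\frac1p\|u_q\|^p-\frac1q\int_B u_q^q\,dx=\Bigl(\frac1p-\frac1q\Bigr)\|u_q\|^p .
\]

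Next I would pick a sequence $q_n\to\infty$ with $c_{q_n}\to\liminf_{q\to\infty}c_q=:L$; if $L=+\infty$ there is nothing to prove, so assume $L<\infty$. From the identity above, $\|u_{q_n}\|^p=c_{q_n}/(\tfrac1p-\tfrac1{q_n})\to pL$. By Lemma~\ref{weakconv}, $u_{q_n}\rightharpoonup u_\infty$ in $W^{1,p}(B)$ with $u_\infty\in\mathcal C$ and $u_\infty(1)=1$, so that $u_\infty=1$ on $\partial B$ and $u_\infty$ is admissible in the minimization problem defining $c_\infty$. Weak lower semicontinuity of $\|\cdot\|$ then yields
\[
c_\infty\le\frac{\|u_\infty\|^p}{p}\le\liminf_{n\to\infty}\frac{\|u_{q_n}\|^p}{p}=L=\liminf_{q\to\infty}c_q ,
\]
where the first inequality is Lemma~\ref{cinfinito}. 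This proves the claim.

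The only point deserving attention is the exact identity $\tilde f_q(u_q)=u_q^{q-1}$, which hinges on the a priori bound $\|u_q\|_{L^\infty(B)}\le K_\infty$ staying strictly below the truncation level $s_0$ (precisely the reason $s_0>K_\infty$ was imposed in Lemma~\ref{lemma:truncated2}); once this is granted, the argument reduces to a one-line computation followed by a semicontinuity step, with no genuine obstacle. In particular the Neumann condition plays no role here, since for this inequality we only need $u_\infty$ to be a valid competitor for $c_\infty$.
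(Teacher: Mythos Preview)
Your proof is correct and follows essentially the same approach as the paper: both use $u_\infty$ as a competitor for $c_\infty$, invoke weak lower semicontinuity of the norm, and exploit the Nehari-type identity $\|u_q\|^p=\int_B u_q^q\,dx$ to relate $\|u_q\|^p$ and $c_q$. The only cosmetic difference is that you solve explicitly for $\|u_q\|^p$ in terms of $c_q$ and pass to the limit along a subsequence realizing the $\liminf$, whereas the paper writes the chain $c_\infty\le\|u_\infty\|^p/p\le\liminf\|u_q\|^p/p=\liminf(c_q+\|u_q\|^p/q)\le\liminf c_q$, absorbing the $\|u_q\|^p/q$ term via the uniform $C^1$-bound of Lemma~\ref{commonbound}.
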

\begin{proof} We take the function $u_\infty$ introduced in Lemma \ref{weakconv} as test function in the definition of $c_\infty$ and  we get for some constant $C>0$ independent of $q$,
\begin{equation}\label{eq:c_infty<c_q}
\begin{split}
c_\infty & \le\frac{\|u_\infty\|^p}p \leq 
\liminf_{q\to\infty} \frac{\|u_q\|^p}{p}
=\liminf_{q\to\infty} \left( I_q(u_q) +\int_B \frac{u_q^{q}}{q}dx \right) \\
& = \liminf_{q\to\infty} \left( c_q +\frac{\|u_q\|^p}{q} \right)
\leq  \liminf_{q\to\infty} \left( c_q + \frac{C}{q}\right)= \liminf_{q\to\infty} c_q,
\end{split}
\end{equation}
where we used \eqref{eq:pure_power}, \eqref{eq:u_q_def}, and Lemma \ref{commonbound}.
\end{proof}

\begin{proof}[$\bullet$ Proof of Theorem \ref{thm:asymptotic_q}]
Let $G$ be the unique solution of \eqref{eqforG}. Since $G\in\mathcal{C}\setminus\{0\}$, by Lemma \ref{gH} there exists a unique $h_q(G)>0$ such that $h_q(G)G \in \mathcal{N}_q$. 
Since $\tilde{f}_q(s)=s^{q-1}$ for $s\leq 1=\|G\|_{L^\infty(B)}$, we have
\begin{equation}\label{eq:t_to1}
h_q(G)= \left( \frac{\|G\|^p}{\int_B G^qdx} \right)^{\frac{1}{q-p}}
\to\frac{1}{\|G\|_{L^\infty(B)}}=1 \quad\textrm{ as } q\to\infty.
\end{equation}
This implies
$$
c_\infty=\frac{\|G\|^p}{p} 
=\lim_{q\to\infty} \frac{\|h_q(G) G\|^p}{p}
=\lim_{q\to\infty} \left(I_q(h_q(G)G)+\frac{h_q(G)^q}{q}\int_B G^q dx \right).
$$
Now, since $h_q(G)G\in\mathcal{N}_q$, we can rewrite the last term as
\[
c_\infty=\lim_{q\to\infty}\left( I_q(h_q(G)G)+ \frac{\|h_q(G) G\|^p}{q} \right) = \lim_{q\to\infty} I_q(h_q(G)G),
\]
by \eqref{eq:t_to1}. Then, since $h_q(G)G\in\mathcal{N}_q$, Lemma \ref{lemma:nehari} implies that 
$$c_q=\inf_{u\in\mathcal N_q}I_q(u)\le I_q(h_q(G)G).$$
The previous two equations provide 
$c_\infty \geq \limsup_{q\to\infty} c_q$.
By combining this inequality with Lemma~\ref{lemma:c_infty<c_q}, we obtain that 
\begin{equation}
c_\infty=\lim_{q\to\infty}c_q.
\end{equation}
As a consequence, the inequalities in \eqref{eq:c_infty<c_q} are indeed equalities, so that 
\[
\lim_{q\to\infty} \|u_q\|=\|G\|\quad\mbox{and}\quad \|u_\infty\|=\|G\|.
\]
Hence, $u_\infty$ achieves $c_\infty$ and, by Lemma \ref{cinfinito}, $u_\infty=G$.
Together with the $W^{1,p}$-weak convergence and the uniform convexity of $W^{1,p}(B)$, this implies that $u_q\to G$ in $W^{1,p}(B)$. By Lemma \ref{weakconv} the convergence is also $C^{0,\nu}(\bar B)$ for any $\nu\in(0,1)$.
\end{proof}

\appendix

\section{Some remarks in the case $1<p<2$}\label{A}
In this appendix we consider the case $1<p<2$. We prove that Proposition \ref{mountainpass} holds under an additional assumption on $g$, that is to say, a mountain pass solution exists also in this case. Nevertheless, we do not know whether the mountain pass solution is nonconstant. In particular, we prove that Lemma \ref{4.9}-(ii) does not hold for $1<p<2$ and $g(u)=u^{q-1}$.

We require $g$ to be of class $C^1((0,\infty))\cap C([0,\infty))$, to satisfy $(g_1)$-$(g_3)$ and 

\begin{itemize}
\item[$(g_4)$] $\inf\{ t\in (u_0,\infty): \ g(t)=t^{p-1}\} < \infty$.
\end{itemize}
We remark that the assumption on the regularity of $g$ is slightly weaker than in case $p\ge2$. This allows us to cover the nonlinearities which behaves like $s^{q-1}$ ($q>p$) near the origin. 

The results in Section \ref{sec2} hold also in this setting with exactly the same proofs. The only difference is that the function $f$ in Lemma \ref{gtof} is of class $C^1((0,\infty))\cap C([0,\infty))$ as $g$. 

Furthermore, proceeding as in Lemma~\ref{truncated} we can build, also in this case, the subcritical nonlinearity $\tilde f$.

\begin{lemma}\label{truncated1} For every $\ell\in(p,p^*)$, there exists $\tilde{f}\in \mathfrak F$ nondecreasing, satisfying
$$\tilde f=f\quad\mbox{in }[0,s_0]\mbox{ for some }s_0>\max\{K_\infty,M\}$$
($K_\infty$ as in Lemma \ref{apriori} and $M$ as in \eqref{Mdelta}),  $(f_1)$-$(f_3)$, 
 \begin{equation}\label{subcritical1}
\lim_{s\to\infty}\frac{\tilde{f}(s)}{s^{\ell-1}}=1 ,
\end{equation}
and with the property that, if $u\in\mathcal C$ solves \eqref{tildeP}, then $u$ solves \eqref{P}.  
\end{lemma}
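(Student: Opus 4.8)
The plan is to repeat, essentially verbatim, the construction and the argument in the proof of Lemma~\ref{truncated}, checking that nothing there uses more than the regularity $C^1((0,\infty))\cap C([0,\infty))$ of $f$ that is now available. Recall that in the present setting the function $f$ produced by Lemma~\ref{gtof} has exactly this regularity, and that all the a priori bounds of Section~\ref{sec2}, in particular Lemma~\ref{apriori}, hold with the same proofs; these, together with the elementary construction of the truncation, are essentially the only ingredients used in Lemma~\ref{truncated}.

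Concretely, first I would fix $s_0>\max\{K_\infty,M\}$, with $K_\infty$ from Lemma~\ref{apriori} and $\delta,M$ from \eqref{Mdelta}. Since $s_0\ge M$, \eqref{Mdelta} forces either $f(s_0)=(m+\delta)s_0^{p-1}$ or $f(s_0)>(m+\delta)s_0^{p-1}$, and in each case I define $\tilde f$ exactly as in Lemma~\ref{truncated}: $\tilde f=f$ on $[0,s_0]$ in the first case, respectively $\tilde f=f_{\mathrm{mod}}$ on $[0,s_0+\eps]$ after the harmless $C^1$ modification of $f$ in a right neighbourhood of $s_0$ in the second case; and beyond the matching point $\tilde f$ is glued to $(m+\delta)s^{p-1}+(s-s_0)^{\ell-1}$, up to an additive constant. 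Then I would check, as in Lemma~\ref{truncated}, that $\tilde f$ is nonnegative and nondecreasing (the tail has nonnegative derivative also for $1<p<2$), that $\tilde f\in\mathfrak F$ because $\tilde f(s)\ge(m+\delta)s^{p-1}$ beyond the matching point, that $(f_1)$ and $(f_3)$ hold since $\tilde f\equiv f$ on $[0,s_0]$ while $u_0\le K_\infty<s_0$ by Lemma~\ref{apriori}, and that $(f_2)$ together with \eqref{subcritical1} hold because the tail is asymptotic to $s^{\ell-1}$ at infinity. Finally, for the transfer statement: if $u\in\mathcal C$ solves \eqref{tildeP}, then $\tilde f\in\mathfrak F$ and Lemma~\ref{apriori} give $\|u\|_{L^\infty(B)}<K_\infty<s_0$, hence $\tilde f(u)=f(u)$ and $u$ solves \eqref{P}.

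The only genuine difference with the case $p>2$, and the only point requiring some care, is the behaviour at the origin: since $f$ need not be differentiable at $0$, neither is $\tilde f$, so we claim only $\tilde f\in C^1((0,\infty))\cap C([0,\infty))$, which is all that is needed in what follows; the statement, unlike Lemma~\ref{truncated}, does not assert $\tilde f\in C^1([0,\infty))$. A further bookkeeping remark: the power tail $(s-s_0)^{\ell-1}$ joins $f$ in a $C^1$ fashion at the matching point only when $\ell>2$, so if $1<p<\ell\le2$ one replaces it by any nonnegative, nondecreasing $C^1$ function that vanishes to first order at the matching point and is asymptotic to $s^{\ell-1}$ at infinity; this changes nothing else. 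Thus no new obstacle arises; in particular assumption $(g_4)$ is not used here, it will be needed only later, to guarantee $u_+<\infty$.
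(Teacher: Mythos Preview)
Your proposal is correct and follows exactly the approach indicated by the paper, which does not give a separate proof for this lemma but merely says to proceed as in Lemma~\ref{truncated}. Your additional care about the regularity at the origin and about the $C^1$ matching of the tail $(s-s_0)^{\ell-1}$ when $\ell\le 2$ is well placed and goes slightly beyond what the paper spells out.
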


The associated energy functional $I$ is defined as in \eqref{I} and is of class $C^{1}$. 

In this setting there exists a mountain pass solution of the problem, 
as stated in the following proposition. 
 
\begin{proposition}[\textbf{Mountain Pass Theorem}]\label{mountainpass1} 
Let $1<p<2$. Let $g \in C^1((0,\infty)) \cap C([0,\infty))$ satisfy $(g_1)$-$(g_4)$.
Then the value $c$ defined in \eqref{minmax} is finite and there exists a critical point $u\in\mathcal C_*$ of $I$ with $I(u)=c$.
\end{proposition}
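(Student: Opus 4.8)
The plan is to rerun the scheme of Section~\ref{sec:mountain_pass_existece} inside the cone $\mathcal C_*$, replacing the monotonicity estimates valid for $p\ge2$ by their $1<p<2$ counterparts and exploiting $(g_4)$ to dispense with the delicate bound of Lemma~\ref{le:salvezza}. By Lemma~\ref{truncated1} we may work with the subcritical nonlinearity $\tilde f$, the $C^1$-functional $I$, and the operator $\tilde T=T\circ\tilde f$. As in Proposition~\ref{Ttildecompact}, $\tilde T$ is compact, but the $p$-Laplacian inequalities of \cite[Lemmas~3.7,~3.8]{BL} now read, for suitable $a,b>0$,
\begin{equation*}
I'(u)[u-\tilde T(u)]\ge a\,\frac{\|u-\tilde T(u)\|^{2}}{(\|u\|+\|\tilde T(u)\|)^{2-p}},\qquad \|I'(u)\|_{*}\le b\,\|u-\tilde T(u)\|^{p-1}.
\end{equation*}
From these one obtains the Palais--Smale condition exactly as in Lemma~\ref{PalaisSmale}: a $(PS)$-sequence is bounded (the argument there uses only $(f_2)$ and the form of $I$, not $p\ge2$), so up to a subsequence $u_n\rightharpoonup u$ and $\tilde T(u_n)\to\tilde T(u)$, and the first inequality gives $\|u_n-\tilde T(u_n)\|\le a^{-1}\|I'(u_n)\|_{*}(\|u_n\|+\|\tilde T(u_n)\|)^{2-p}\to0$, whence $u_n\to\tilde T(u)=u$.

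Next, Lemma~\ref{cononelcono} applies verbatim, so $\tilde T(\mathcal C_*)\subseteq\mathcal C_*$ (here $u_+<\infty$ by $(g_4)$), and the construction of the locally Lipschitz vector field $K$ of Lemma~\ref{KTI} carries over with $\delta_1(u)=\tfrac12\|u-\tilde T(u)\|$ and $\delta_2$ readjusted to the new exponents, yielding $K(\mathcal C_*\cap W)\subset\mathcal C_*$, $\tfrac12\|u-K(u)\|\le\|u-\tilde T(u)\|\le 2\|u-K(u)\|$ and $I'(u)[u-K(u)]\ge\tfrac a2\,\|u-\tilde T(u)\|^{2}/(\|u\|+\|\tilde T(u)\|)^{2-p}$. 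The analogue of Lemma~\ref{conseqPS} is actually easier here: under the hypothesis $I'(u)\ne0$ for all $u\in\mathcal C_*$ with $I(u)=c$, the Palais--Smale condition gives $\|I'(u)\|_{*}\ge\bar\delta$ on a strip $|I(u)-c|\le2\bar\varepsilon$, and the second inequality above immediately forces $\|u-\tilde T(u)\|\ge(\bar\delta/b)^{1/(p-1)}$, hence $\|u-K(u)\|\ge\bar\delta'>0$ there, with no recourse to Lemma~\ref{le:salvezza} (indeed the restrictive choice \eqref{defell} of $\ell$ is unavailable for $p<2$).

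The decisive point, which replaces Lemma~\ref{le:salvezza}, is that $(g_4)$ makes $\mathcal C_*$ bounded in $L^\infty(B)$: every $u\in\mathcal C_*$ satisfies $0\le u\le u_+<\infty$, so $\tilde f(u)\le\tilde f(u_+)$; testing the defining equation $-\Delta_p\tilde T(u)+m|\tilde T(u)|^{p-2}\tilde T(u)=\tilde f(u)$ with $\tilde T(u)$ gives $\|\tilde T(u)\|\le C_1$, and since $\int_B\tilde F(u)\,dx\le\tilde F(u_+)|B|$, the sublevel set $\{u\in\mathcal C_*:I(u)\le c+2\bar\varepsilon\}$ is bounded in $W^{1,p}(B)$, say $\|u\|+\|\tilde T(u)\|\le C_2$ on it. Hence on the strip $|I(u)-c|\le2\bar\varepsilon$ one has $I'(u)[u-K(u)]\ge\tfrac a2\,C_2^{-(2-p)}\|u-\tilde T(u)\|^{2}\ge c_0>0$, and the descending-flow construction of Lemma~\ref{deformation} goes through word for word, the Euler-polygon argument preserving $\mathcal C_*$ using only the local Lipschitz continuity of $K$ and the convexity of $\mathcal C_*$; this produces $\eta:\mathcal C_*\to\mathcal C_*$ with properties (i)--(iv).

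Finally, Lemma~\ref{sec:case-multiple-fixed-2} holds for every $p>1$, and since $u_+<\infty$ we are always in its case~(ii); defining $U_\pm$, $\Gamma$ and $c$ as in \eqref{eq:2}--\eqref{minmax} with the $u_+<\infty$ formula for $U_+$, the straight segment $t\mapsto(1-t)u_-+tu_+$ belongs to $\Gamma$, so $c<\infty$, while the mountain pass geometry gives $c\ge\max\{I(u_-),I(u_+)\}+\alpha$; the deformation argument of Proposition~\ref{mountainpass} (case $u_+<\infty$) then yields a critical point $u\in\mathcal C_*$ of $I$ with $I(u)=c$, which by Lemma~\ref{truncated1} and Remark~\ref{TfixI'} is a weak solution of \eqref{P}. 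The main obstacle is precisely the degeneracy of the $p$-Laplacian monotonicity estimates for $1<p<2$: the factor $(\|u\|+\|\tilde T(u)\|)^{2-p}$ now sits on the favourable side of the coercivity inequality, so the descending flow only decreases $I$ by a definite amount once one controls $\|u\|+\|\tilde T(u)\|$ from above along the flow — and the role of $(g_4)$ is exactly to supply this control via the resulting $L^\infty$-bound on $\mathcal C_*$, which for $p\ge2$ had to be wrung out of the far more delicate Lemma~\ref{le:salvezza}.
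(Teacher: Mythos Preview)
Your proposal is correct and follows essentially the same route as the paper: the modified $p$-Laplacian inequalities \eqref{J1} for $1<p<2$, the resulting simplification of Lemma~\ref{conseqPS} via the direct bound $\|u-\tilde T(u)\|\ge(\bar\delta/b)^{1/(p-1)}$, and the use of $(g_4)$ (i.e.\ $u_+<\infty$) to bound $\|u\|+\|\tilde T(u)\|$ on sublevel sets inside $\mathcal C_*$ in lieu of Lemma~\ref{le:salvezza} are exactly the ingredients of the appendix (Proposition~\ref{Ttildecompact1}, Lemmas~\ref{conseqPS1}, \ref{lem:eta_bounded}, \ref{deformation1}). The final mountain pass step is then the $u_+<\infty$ case of Proposition~\ref{mountainpass}, as you say.
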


The proof of Proposition \ref{mountainpass1} relies on several preliminary results, which hold under the same assumptions. 

\begin{proposition}\label{Ttildecompact1}
The operator $\tilde T$ defined in \eqref{eq:tildeT_def} is compact. Furthermore, there exist two positive constants $a,\,b$ such that for all $u\in W^{1,p}(B)$ the following properties hold
\begin{equation}\label{J1}
\begin{aligned}
&I'(u)[u-\tilde T(u)]\ge a\|u-\tilde T(u)\|^2(\|u\|+\|\tilde T(u)\|)^{p-2},\\
&\|I'(u)\|_{*}\le b\|u-\tilde T(u)\|^{p-1},
\end{aligned}
\end{equation}
\end{proposition}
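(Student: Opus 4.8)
The plan is to follow the proof of Proposition \ref{Ttildecompact} almost verbatim; the only genuinely new point is that the two inequalities in \eqref{J1} now rest on the vector inequalities for the map $\xi\mapsto|\xi|^{p-2}\xi$ in the range $1<p<2$ rather than on those for $p>2$.

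First I would treat the compactness of $\tilde T$, which does not use $p>2$ at all: given a bounded sequence $(u_n)\subset W^{1,p}(B)$, extract (up to subsequences) $u_n\rightharpoonup u$ in $W^{1,p}(B)$, $u_n\to u$ in $L^\ell(B)$ by the compact Sobolev embedding ($\ell<p^*$), $u_n\to u$ a.e.\ in $B$, and $|u_n|\le h\in L^\ell(B)$; then \eqref{crescitasottocritica} and the Dominated Convergence Theorem give $\tilde f(u_n)\to\tilde f(u)$ in $L^{\ell'}(B)\hookrightarrow(W^{1,p}(B))'$, and the continuity \eqref{eq:T_continuous} of $T$ yields $\tilde T(u_n)\to\tilde T(u)$ in $W^{1,p}(B)$, exactly as before.

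For the inequalities \eqref{J1}, I would write $v:=\tilde T(u)$ and, subtracting the equation $(P_{\tilde f(u)})$ tested against $\varphi$ from the definition of $I'(u)[\varphi]$, use
$$
I'(u)[\varphi]=\int_B\bigl(|\nabla u|^{p-2}\nabla u-|\nabla v|^{p-2}\nabla v\bigr)\cdot\nabla\varphi\,dx+m\int_B\bigl(|u|^{p-2}u-|v|^{p-2}v\bigr)\varphi\,dx.
$$
For the first line I would take $\varphi=u-v$ and invoke the standard lower bound for $1<p<2$, namely $\bigl(|\xi|^{p-2}\xi-|\eta|^{p-2}\eta\bigr)\cdot(\xi-\eta)\ge c_p\,|\xi-\eta|^2(|\xi|+|\eta|)^{-(2-p)}$ (and its scalar analogue for the zero-order term), applied pointwise; then a reverse Hölder inequality with exponents $2/p$ and $2/(2-p)$ applied to $|\nabla(u-v)|^p$ and to $|u-v|^p$, combined with $\int_B(|\nabla u|+|\nabla v|)^p\,dx\le C(\|u\|+\|v\|)^p$, $\int_B(|u|+|v|)^p\,dx\le (C/m)(\|u\|+\|v\|)^p$, and the elementary estimate $a^{2/p}+b^{2/p}\ge 2^{1-2/p}(a+b)^{2/p}$ (convexity of $t\mapsto t^{2/p}$), turns the two singular-weight Dirichlet forms into $a\,\|u-v\|^2(\|u\|+\|v\|)^{p-2}$. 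For the second line I would instead use the companion bound $\bigl||\xi|^{p-2}\xi-|\eta|^{p-2}\eta\bigr|\le C_p|\xi-\eta|^{p-1}$, test against arbitrary $\varphi$, and apply Hölder with the conjugate pair $(p',p)$, noting $(p-1)p'=p$ so that $\||\nabla(u-v)|^{p-1}\|_{L^{p'}}=\|\nabla(u-v)\|_{L^p}^{p-1}$ and likewise for the lower-order term; this gives $\|I'(u)\|_*\le b\,\|u-v\|^{p-1}$. In substance this reproduces the computations of \cite[Lemmas~3.7, 3.8]{BL} with the $p<2$ vector inequalities in place of the $p>2$ ones.

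The main obstacle is the first inequality in \eqref{J1}: for $p>2$ the monotonicity inequality directly controls $\|u-v\|^p$, whereas here the pointwise estimate only produces the singular-weight form $\int_B|\nabla(u-v)|^2(|\nabla u|+|\nabla v|)^{-(2-p)}\,dx$, and extracting a clean power of $\|u-v\|$ from it is possible only at the cost of the factor $(\|u\|+\|\tilde T(u)\|)^{p-2}$ on the right-hand side. This is precisely the weakening of \eqref{J2} to \eqref{J1}, and it is what later forces, in the construction of the locally Lipschitz vector field and the descending flow in the appendix, a more delicate bookkeeping of the mutual dependence of $\|u\|$, $\|\tilde T(u)\|$, and $\|u-\tilde T(u)\|$.
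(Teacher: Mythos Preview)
Your proposal is correct and follows essentially the same route as the paper: the paper's own proof is the single line ``The proof is analogous to the one of Proposition~\ref{Ttildecompact}'', which in turn defers the inequalities to \cite[Lemmas~3.7, 3.8]{BL}, and you have simply unpacked those references with the $1<p<2$ vector inequalities in place of the $p>2$ ones. Your reverse H\"older step and the companion bound $\bigl||\xi|^{p-2}\xi-|\eta|^{p-2}\eta\bigr|\le C_p|\xi-\eta|^{p-1}$ are exactly the ingredients used in \cite{BL} for this range of $p$.
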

\begin{proof}
The proof is analogous to the one of Proposition \ref{Ttildecompact}.
\end{proof}

\begin{lemma}[\textbf{Palais-Smale condition}]
$I$ satisfies the Palais-Smale condition, i.e. every sequence $(u_n)\subset W^{1,p}(B)$ such that $(I(u_n))$ is bounded and $I'(u_n)\to0$ in $(W^{1,p}(B))'$ admits a convergent subsequence. 
\end{lemma}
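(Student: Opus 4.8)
The plan is to follow the proof of Lemma~\ref{PalaisSmale} for $p>2$, replacing the two uses of the estimates \eqref{J2} by the corresponding estimates \eqref{J1}. Let $(u_n)\subset W^{1,p}(B)$ be a Palais--Smale sequence, so that $(I(u_n))$ is bounded and $\|I'(u_n)\|_*\to0$.

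First I would prove that $(u_n)$ is bounded in $W^{1,p}(B)$; this step is literally the same as in the case $p>2$ and does not use \eqref{J1}. By \eqref{subcritical1} and L'H\^opital's rule, $\lim_{s\to+\infty}\tilde f(s)s/\tilde F(s)=\ell$, hence there exist $\mu\in(p,\ell]$ and $R_0>0$ with $\tilde f(s)s\ge\mu\tilde F(s)$ for all $s\ge R_0$. Then
$$I(u_n)-\tfrac1\mu I'(u_n)[u_n]\ge\Big(\tfrac1p-\tfrac1\mu\Big)\|u_n\|^p+\int_{\{u_n\le R_0\}}\Big(\tfrac1\mu\tilde f(u_n)u_n-\tilde F(u_n)\Big)dx,$$
while the left-hand side is bounded by $|I(u_n)|+\tfrac1\mu\|I'(u_n)\|_*\|u_n\|\le C(1+\|u_n\|)$; since the integral over $\{u_n\le R_0\}$ is uniformly bounded, we obtain $(\tfrac1p-\tfrac1\mu)\|u_n\|^p\le C(1+\|u_n\|)$ and hence $(u_n)$ is bounded. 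Up to a subsequence $u_n\rightharpoonup u$ in $W^{1,p}(B)$, and by the compactness of $\tilde T$ (Proposition~\ref{Ttildecompact1}) we may pass to a further subsequence so that $\tilde T(u_n)\to\tilde T(u)$ in $W^{1,p}(B)$.

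Next I would show $\|u_n-\tilde T(u_n)\|\to0$; this is where \eqref{J1} enters and where the only new difficulty appears, namely the negative exponent $p-2$. Since $(u_n)$ is bounded and $(\tilde T(u_n))$ converges, there is $R>0$ with $\|u_n\|+\|\tilde T(u_n)\|\le R$ for all $n$, so, because $p-2<0$, $(\|u_n\|+\|\tilde T(u_n)\|)^{p-2}\ge R^{p-2}>0$. The first inequality in \eqref{J1}, together with $I'(u_n)[u_n-\tilde T(u_n)]\le\|I'(u_n)\|_*\|u_n-\tilde T(u_n)\|$, then gives
$$aR^{p-2}\|u_n-\tilde T(u_n)\|^2\le\|I'(u_n)\|_*\,\|u_n-\tilde T(u_n)\|,$$
whence $\|u_n-\tilde T(u_n)\|\le (aR^{p-2})^{-1}\|I'(u_n)\|_*\to0$ (the second inequality in \eqref{J1} could be used instead, but the first already suffices).

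Finally, the triangle inequality yields $\limsup_n\|u_n-\tilde T(u)\|\le\limsup_n\|u_n-\tilde T(u_n)\|+\lim_n\|\tilde T(u_n)-\tilde T(u)\|=0$, so $u_n\to\tilde T(u)$ strongly in $W^{1,p}(B)$; together with $u_n\rightharpoonup u$ this forces $u=\tilde T(u)$ and $u_n\to u$. Thus $(u_n)$ has a convergent subsequence, proving the Palais--Smale condition. The main (and essentially only) obstacle relative to the case $p>2$ is controlling the factor $(\|u_n\|+\|\tilde T(u_n)\|)^{p-2}$ from below, which is immediate once one records that this quantity stays bounded above along the extracted subsequence.
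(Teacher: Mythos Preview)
Your proof is correct and essentially identical to the paper's: both repeat the boundedness argument from Lemma~\ref{PalaisSmale} verbatim, then use the first inequality in \eqref{J1} together with the boundedness of $\|u_n\|+\|\tilde T(u_n)\|$ (from the boundedness of $(u_n)$ and the compactness of $\tilde T$) to deduce $\|u_n-\tilde T(u_n)\|\to0$ and conclude via the triangle inequality. The only cosmetic difference is that you bound $(\|u_n\|+\|\tilde T(u_n)\|)^{p-2}$ from below by $R^{p-2}$, whereas the paper rearranges to bound $(\|u_n\|+\|\tilde T(u_n)\|)^{2-p}$ from above.
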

\begin{proof}
Reasoning as in Lemma \ref{PalaisSmale}, we obtain that any (PS)-sequence $(u_n)$ is weakly converging to some $u$ in $W^{1,p}(B)$ and that  \eqref{trineq} holds.
Now, by the first inequality of \eqref{J1} we get 
$$\|u_n-\tilde T(u_n)\|^2(\|u_n\|+\|\tilde T(u_n)\|)^{p-2}\le \frac1a\|I'(u_n)\|_*\|u_n-\tilde T(u_n)\|.$$
Hence, being $(u_n)$ bounded and $\tilde T$ compact (see Proposition \ref{Ttildecompact1}), we have 
$$\|u_n-\tilde T(u_n)\|\le \frac1a \|I'(u_n)\|_* (\|u_n\|+\|\tilde T(u_n)\|)^{2-p}\to0.$$
We conclude that $u_n\to \tilde T(u)=u$ in $W^{1,p}(B)$.
\end{proof}

Lemma \ref{cononelcono} holds for all $1<p<\infty$, hence also in this case the operator $\tilde T$ preserves the cone $\mathcal C_*$ defined in \eqref{Cstar}. 

\begin{lemma}[\textbf{Locally Lipschitz vector field}]\label{KTI1} Let $W:=W^{1,p}(B)\setminus\{u\,:\,\tilde T(u)=u\}$. There exists a locally Lipschitz continuous operator $K: W\to W^{1,p}(B)$ satisfying the following properties: 
\begin{itemize}
\item[(i)] $K(\mathcal C_*\cap W)\subset \mathcal C_*$;
\item[(ii)] $\frac12\|u-K(u)\|\le\|u-\tilde T(u)\|\le2\|u-K(u)\|$ for all $u\in W$;
\item[(iii)] let $a>0$ be the constant given in Proposition \ref{Ttildecompact1}, then 
$$I'(u)[u-K(u)]\ge\frac{a}2 \|u-\tilde T(u)\|^2(\|u\|+\|\tilde T(u)\|)^{p-2}\quad\mbox{for all }u\in W.$$
\end{itemize}
\end{lemma}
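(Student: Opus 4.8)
The plan is to mimic closely the construction and proof of Lemma~\ref{KTI}, which established the analogous statement for $p>2$, with the modifications dictated by the different structure of the inequalities \eqref{J1} as compared to \eqref{J2}. The underlying scheme -- a partition-of-unity patching of the continuous map $\tilde T$ into a locally Lipschitz map $K$ that stays close to $\tilde T$, preserves the cone $\mathcal C_*$, and is a suitable descent direction for $I$ -- is robust and carries over verbatim; what must be adapted are the precise choices of the two radii $\delta_1,\delta_2$ that control how close the patched map is allowed to be to $\tilde T$.

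First I would replace the definitions of $\delta_1,\delta_2:W\to\mathbb R$ by
$$\delta_1(u):=\tfrac12\|u-\tilde T(u)\|,\qquad \delta_2(u):=\frac{a\|u-\tilde T(u)\|(\|u\|+\|\tilde T(u)\|)^{p-2}}{2b\|u-\tilde T(u)\|^{p-2}},$$
or, more transparently, any $\delta_2$ chosen so that $b\,\delta_2(u)\cdot \|u-\tilde T(u)\|^{p-2}\le \tfrac{a}{2}\|u-\tilde T(u)\|^2(\|u\|+\|\tilde T(u)\|)^{p-2}$ for the intended error term; here one exploits that on $W$ one has $u\neq\tilde T(u)$, that $u\not\equiv 0$ (so $\|u\|+\|\tilde T(u)\|>0$), and that by Remark~\ref{TfixI'} together with the second inequality of \eqref{J1} the quantity $\|u-\tilde T(u)\|$ is bounded below by a positive continuous function of $u$, whence $\delta_1,\delta_2$ are continuous and strictly positive on $W$. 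Then, exactly as in Lemma~\ref{KTI}, I would prove by a contradiction argument (using the continuity \eqref{eq:T_continuous} of $T$, hence of $\tilde T$, and Remark~\ref{TfixI'}) that every $u\in W$ admits a neighbourhood $N(u)$ on which $\|\tilde T(v)-\tilde T(w)\|<\min\{\delta_1(v),\delta_2(v),\delta_1(w),\delta_2(w)\}$ for all $v,w\in N(u)$.

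Next, take a locally finite open refinement $\mathcal U$ of $\{N(u):u\in W\}$ and the subordinate Lipschitz partition of unity $\{\pi_U\}$, and for each $U$ pick a base point $a_U\in U$ with $a_U\in U\cap\mathcal C_*$ whenever $U\cap\mathcal C_*\neq\emptyset$; set $K(u):=\sum_{U\in\mathcal U}\pi_U(u)\tilde T(a_U)$. Local Lipschitz continuity of $K$ follows from that of the $\pi_U$'s and local finiteness; property~(i) follows from $\tilde T(\mathcal C_*)\subseteq\mathcal C_*$ (Lemma~\ref{cononelcono}, which holds for all $p>1$), convexity of $\mathcal C_*$, and the choice of the $a_U$'s. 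The key estimate $\|K(u)-\tilde T(u)\|<\delta_1(u)=\tfrac12\|u-\tilde T(u)\|$ is obtained precisely as in \eqref{K-Ttilde}, and yields (ii) by the triangle inequality. For (iii) I would write $I'(u)[u-K(u)]\ge I'(u)[u-\tilde T(u)]-\|I'(u)\|_*\|K(u)-\tilde T(u)\|$ and then bound $\|I'(u)\|_*\le b\|u-\tilde T(u)\|^{p-1}$ by the second line of \eqref{J1} and $\|K(u)-\tilde T(u)\|<\delta_2(u)$; with the calibrated choice of $\delta_2$, the subtracted term is at most $\tfrac{a}{2}\|u-\tilde T(u)\|^2(\|u\|+\|\tilde T(u)\|)^{p-2}$, leaving the desired lower bound.

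The main subtlety -- and the only place the argument genuinely differs from the $p>2$ case -- is the bookkeeping in the choice of $\delta_2$: because in \eqref{J1} the coercivity exponent is $2$ (with a $(\|u\|+\|\tilde T(u)\|)^{p-2}$ weight) while the dual-norm bound carries the exponent $p-1$, one must make sure that $\delta_2$ is genuinely positive and continuous on $W$, which again comes down to the lower bound on $\|u-\tilde T(u)\|$ on $W$ supplied by Remark~\ref{TfixI'} and \eqref{J1}. Once that is in place, all the contradiction and covering arguments are identical to those in the proof of Lemma~\ref{KTI}, so I would simply write "we proceed as in the proof of Lemma~\ref{KTI}, with $\delta_1,\delta_2$ replaced as above and \eqref{J2} replaced by \eqref{J1}" and carry out only the two modified estimates in detail.
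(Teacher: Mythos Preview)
Your proposal is correct and follows exactly the approach the paper takes: the paper's own proof is a one-line reference to \cite[Lemma~2.1]{BartschLiuWeth} (the same construction already spelled out in Lemma~\ref{KTI}), and what you have written is precisely that argument with $\delta_2$ recalibrated so that $b\,\delta_2(u)\|u-\tilde T(u)\|^{p-1}\le \tfrac{a}{2}\|u-\tilde T(u)\|^2(\|u\|+\|\tilde T(u)\|)^{p-2}$. One small comment: you do not actually need any ``lower bound on $\|u-\tilde T(u)\|$'' beyond its pointwise positivity on $W$; since $3-p>0$ and $0\notin W$, your $\delta_2$ is automatically a positive continuous function there, and the contradiction argument for the neighbourhoods $N(u)$ goes through verbatim.
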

\begin{proof} 
By \eqref{J1} it is possible to proceed as in \cite[Lemma 2.1]{BartschLiuWeth}, with $\mathcal D^+:=\mathcal C_*$ and $\mathcal D^-:=\emptyset$.
\end{proof}

\begin{lemma}\label{conseqPS1}
Let $c\in\mathbb R$ be such that $I'(u)\neq 0$ for all $u\in \mathcal C_*$ with $I(u)=c$. Then there exist two positive constants $\bar\varepsilon$ and $\bar\delta$ such that the following inequalities hold  
\begin{itemize}
\item[(i)] $\|I'(u)\|_*\ge\bar\delta$ for all $u\in \mathcal C_*$ with $|I(u)-c|\le 2\bar\varepsilon$;
\item[(ii)] $\|u-K(u)\|\ge\bar\delta$ for all $u\in \mathcal C_*$ with $|I(u)-c|\le 2\bar\varepsilon$.
\end{itemize}
\end{lemma}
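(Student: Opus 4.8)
The plan is to mimic the proof of Lemma~\ref{conseqPS}, the only structural difference being that in the regime $1<p<2$ the second estimate in \eqref{J1} is a pure power of $\|u-\tilde T(u)\|$, so that no analogue of Lemma~\ref{le:salvezza} (and hence no extra restriction on $\ell$ of the type \eqref{defell}) will be needed here.

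First I would prove (i) by contradiction, exactly as in the proof of Lemma~\ref{conseqPS}(i). If the conclusion fails, one extracts a sequence $(u_n)\subset\mathcal C_*$ with $\|I'(u_n)\|_*<1/n$ and $c-1/n\le I(u_n)\le c+1/n$; this is a Palais--Smale sequence at level $c$, so by the Palais--Smale condition proved above a subsequence converges in $W^{1,p}(B)$ to some $u$. Since $\mathcal C_*$ is closed, $u\in\mathcal C_*$, and since $I\in C^1$ one gets $I(u)=c$ and $I'(u)=0$, contradicting the hypothesis on $c$. This produces $\bar\varepsilon>0$ and $\bar\delta_1>0$ such that $\|I'(u)\|_*\ge\bar\delta_1$ whenever $u\in\mathcal C_*$ and $|I(u)-c|\le 2\bar\varepsilon$.

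For (ii), the point is that by (i) together with Remark~\ref{TfixI'}, every $u\in\mathcal C_*$ with $|I(u)-c|\le 2\bar\varepsilon$ satisfies $\tilde T(u)\neq u$, hence lies in the set $W$ on which $K$ is defined; so Lemma~\ref{KTI1}(ii) gives $\|u-K(u)\|\ge\frac12\|u-\tilde T(u)\|$ for all such $u$. On the other hand, the second inequality of \eqref{J1} and part (i) yield $\bar\delta_1\le\|I'(u)\|_*\le b\|u-\tilde T(u)\|^{p-1}$, whence $\|u-\tilde T(u)\|\ge(\bar\delta_1/b)^{1/(p-1)}$ (here $1/(p-1)>1$, but this is harmless). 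Combining the two estimates, $\|u-K(u)\|\ge\frac12(\bar\delta_1/b)^{1/(p-1)}$ for every $u\in\mathcal C_*$ with $|I(u)-c|\le 2\bar\varepsilon$, and (ii) follows upon setting $\bar\delta:=\min\bigl\{\bar\delta_1,\tfrac12(\bar\delta_1/b)^{1/(p-1)}\bigr\}$, keeping the same $\bar\varepsilon$.

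I do not expect a genuine obstacle here: the delicate part of the case $p\ge 2$ — controlling the factor $(\|u\|+\|\tilde T(u)\|)^{p-2}$ via Lemma~\ref{le:salvezza}, which forced the restriction \eqref{defell} — disappears, since for $1<p<2$ that factor has a nonpositive exponent and is absent from \eqref{J1}. The only point requiring a little care is to invoke Remark~\ref{TfixI'} to ensure that the relevant sublevel strip is contained in $W$ before applying Lemma~\ref{KTI1}(ii).
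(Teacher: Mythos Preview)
Your proposal is correct and follows essentially the same approach as the paper: part (i) is proved by the Palais--Smale contradiction argument of Lemma~\ref{conseqPS}(i), and part (ii) combines Lemma~\ref{KTI1}(ii) with the second inequality of \eqref{J1} and part (i) to bound $\|u-\tilde T(u)\|$ from below, then takes the minimum to redefine $\bar\delta$. Your observation that the case $1<p<2$ is structurally simpler---the power bound $\|I'(u)\|_*\le b\|u-\tilde T(u)\|^{p-1}$ makes Lemma~\ref{le:salvezza} unnecessary---is exactly the point.
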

\begin{proof} The proof of part (i) is analogous to the one given in Lemma \ref{conseqPS}.
We prove now (ii). Let $$I^{c+2\bar\varepsilon}_{c-2\bar\varepsilon}:= \{u\in \mathcal C_*\,:\,|I(u)-c|\le 2\bar\varepsilon\}.$$ By the part (i),  $I^{c+2\bar\varepsilon}_{c-2\bar\varepsilon}\subset W$, where $W$ is defined in Lemma \ref{KTI1}. Furthermore, for all $u\in I^{c+2\bar\varepsilon}_{c-2\bar\varepsilon}$, $\|u-K(u)\|\ge\frac12\|u-\tilde T(u)\|$ by Lemma \ref{KTI1}-(ii).
Now, by the second inequality of \eqref{J1} and by the (i) part of the present lemma, we have for all $u\in I^{c+2\bar\varepsilon}_{c-2\bar\varepsilon}$ 
$$\|u-\tilde T(u)\|\ge\left(\frac{\|I'(u)\|_*}b\right)^{\frac1{p-1}}\ge\left(\frac{\bar\delta}b\right)^{\frac1{p-1}}.$$
Hence, $\|u-K(u)\|\ge \min\left\{\bar\delta, \frac12\left(\frac{\bar\delta}b\right)^{\frac1{p-1}}\right\}$, still denoted by $\bar\delta$.
\end{proof}

\begin{lemma}\label{lem:eta_bounded}
Let $c\in \R$. The set
\[
\{ \|u\|: \ u\in \mathcal C_* \mbox{ and } I(u)\leq c \}
\]
is bounded by a constant depending only on $c$.
\end{lemma}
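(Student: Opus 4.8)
The plan is to exploit assumption $(g_4)$ — which plays no role when $p\geq2$, where $u_+=+\infty$ is admissible — to show that in the present setting $u_+<+\infty$; granted this, the statement follows at once from the explicit form of $I$. The first step is to record that $u_+$ is finite. By Lemma~\ref{gtof} we have $f(s)=g(s)+Cs^{p-1}$ and $m=1+C$, so $f(t)=mt^{p-1}$ is equivalent to $g(t)=t^{p-1}$; hence $(g_4)$ provides a finite $t^*>u_0$ with $f(t^*)=m(t^*)^{p-1}$. Since only a lower bound is imposed on the constant $s_0$ in Lemma~\ref{truncated1}, we may construct $\tilde f$ with $s_0>\max\{K_\infty,M,t^*\}$; then $\tilde f=f$ on $[0,s_0]\ni t^*$ gives $\tilde f(t^*)=m(t^*)^{p-1}$, so that $u_+\leq t^*<+\infty$.

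Next, because every $u\in\mathcal C_*$ is radially nondecreasing and satisfies $0\leq u_-\leq u\leq u_+$ pointwise in $B$, one has the uniform sup-bound $\|u\|_{L^\infty(B)}=u(1)\leq u_+$ for all $u\in\mathcal C_*$. For $u\in\mathcal C_*$ with $I(u)\leq c$, I would then use $I(u)=\tfrac1p\|u\|^p-\int_B\tilde F(u)\,dx$ together with the fact that $\tilde F$ is nonnegative and nondecreasing on $[0,\infty)$ (because $\tilde f\geq0$), whence $\int_B\tilde F(u)\,dx\leq|B|\,\tilde F(u_+)$, to get
\[
\tfrac1p\|u\|^p=I(u)+\int_B\tilde F(u)\,dx\leq c+|B|\,\tilde F(u_+).
\]
This yields $\|u\|\leq\bigl(p(|c|+|B|\,\tilde F(u_+))\bigr)^{1/p}=:C(c)$, a bound depending only on $c$ (and the set in the statement is empty when $c<-|B|\,\tilde F(u_+)$).

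There is no serious difficulty in this argument: once $u_+$ is known to be finite, the uniform $L^\infty$-control on $\mathcal C_*$ and the energy identity do all the work. The only point deserving attention is the first step, namely deducing $u_+<+\infty$ from $(g_4)$ and the freedom in the choice of $s_0$; this is also the reason the statement must be confined to the present setting, since for $g(u)=u^{q-1}$ one has $u_+=+\infty$, assumption $(g_4)$ fails, and the sublevel sets of $I$ on the cone $\mathcal C$ are genuinely unbounded, the functional being unbounded below along rays.
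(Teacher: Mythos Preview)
Your argument is correct and follows the same route as the paper: use $(g_4)$ to show $u_+<\infty$, then exploit the pointwise bound $u\le u_+$ on $\mathcal C_*$ to control $\int_B\tilde F(u)\,dx$ and hence $\|u\|^p$. Your treatment of the first step is in fact more careful than the paper's (you explicitly enlarge $s_0$ so that $t^*\le s_0$ and $\tilde f(t^*)=f(t^*)$), and in the second step you use the cleaner bound $\tilde F(u)\le\tilde F(u_+)$ via monotonicity, whereas the paper invokes the subcritical growth \eqref{subcritical1} to get $\tilde F(u)\le C(u+u^\ell)\le C(u_++u_+^\ell)$; both are equally valid and the structure is the same.
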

\begin{proof}
Let $u\in \mathcal C_*$, then $u\leq u_+$, where $u_+$ is defined in \eqref{u-u+}. Since the function $\tilde{f}$ introduced in Lemma \ref{truncated1} belongs to $\mathfrak{F}$, we have
\[
u_+ = \inf\{ t\in (u_0,\infty): \ g(t)=t^{p-1}\}<\infty
\]
by $(g_4)$. If in addition $I(u)\leq c$, relation \eqref{subcritical1} provides
\[
\frac{\|u\|^p}{p} \leq c+C\int_B(u+u^\ell) dx \leq c+C|B|(u_++u_+^\ell). \qedhere
\]
\end{proof}

\begin{lemma}[\textbf{Descending flow argument}]\label{deformation1} Let $c\in\mathbb R$ be such that $I'(u)\neq 0$ for all $u\in \mathcal C_*$ with $I(u)=c$. Then there exists a function $\eta:\mathcal C_*\to\mathcal C_*$ satisfying the following properties: 
\begin{itemize}
\item[(i)] $\eta$ is continuous with respect to the topology of $W^{1,p}(B)$;
\item[(ii)] $I(\eta(u))\le I(u)$ for all $u\in\mathcal C_*$;
\item[(iii)] $I(\eta(u))\le c-\bar\varepsilon$ for all $u\in\mathcal C_*$ such that $|I(u)-c|<\bar\varepsilon$;
\item[(iv)] $\eta(u)=u$ for all $u\in\mathcal C_*$ such that $|I(u)-c|>2\bar\varepsilon$,
\end{itemize}
where $\bar\varepsilon$ is the positive constant given by Lemma \ref{conseqPS1}.
\end{lemma}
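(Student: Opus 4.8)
The plan is to transcribe the proof of Lemma \ref{deformation} almost verbatim, replacing the inequalities \eqref{J2} by their $1<p<2$ counterparts \eqref{J1}, and compensating for the weight $(\|u\|+\|\tilde T(u)\|)^{p-2}$ in \eqref{J1} --- which now \emph{decreases} in the norms, since $p-2<0$ --- by means of the a priori bound of Lemma \ref{lem:eta_bounded}. First I would fix the same cut-off functions as in Lemma \ref{deformation}: $\chi_1:\mathbb R\to[0,1]$ with $\chi_1\equiv1$ on $\{|t-c|<\bar\varepsilon\}$ and $\chi_1\equiv0$ on $\{|t-c|>2\bar\varepsilon\}$, and $\chi_2:W^{1,p}(B)\to[0,1]$ with $\chi_2\equiv1$ on $\{\|u-K(u)\|\ge\bar\delta\}$ and $\chi_2\equiv0$ on $\{\|u-K(u)\|\le\bar\delta/2\}$, where now $\bar\varepsilon,\bar\delta$ are provided by Lemma \ref{conseqPS1} and $K$ by Lemma \ref{KTI1}. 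With the locally Lipschitz vector field
$$\Phi(u):=\begin{cases}\chi_1(I(u))\chi_2(u)\dfrac{u-K(u)}{\|u-K(u)\|}&\text{if }|I(u)-c|\le2\bar\varepsilon,\\ 0&\text{otherwise,}\end{cases}$$
which satisfies $\|\Phi(u)\|\le1$ for every $u$, the Cauchy problem \eqref{CauchyProblem} admits, for every $u\in\mathcal C_*$, a unique global solution $\eta(\cdot,u)\in C^1([0,\infty);W^{1,p}(B))$.

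Next I would prove $\eta(t,\mathcal C_*)\subset\mathcal C_*$ for all $t\ge0$ by the Euler-polygonal argument of Lemma \ref{deformation}, which nowhere uses $p>2$: the polygonal iterates can be written as convex combinations $(1-\lambda)\bar\eta_n(t_i,u)+\lambda K(\bar\eta_n(t_i,u))$ with $0\le\lambda\le1$ (here Lemma \ref{conseqPS1}-(ii) is what guarantees $\lambda\le1$), hence they remain in $\mathcal C_*$ by convexity of $\mathcal C_*$ and by Lemma \ref{KTI1}-(i); the same Gronwall-type error estimate (which uses only that $\Phi$ is bounded and locally Lipschitz and that $\eta([0,T])$ is compact) shows the polygonals converge to $\eta(\cdot,u)$ in $W^{1,p}(B)$, and $\mathcal C_*$ is closed. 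Property (iv) follows from $\Phi\equiv0$ outside $\{|I(u)-c|\le2\bar\varepsilon\}$, property (i) is the continuous dependence of the flow on the initial datum, and property (ii) follows from
$$\frac{d}{dt}I(\eta(t,u))=-\frac{\chi_1(I(\eta))\chi_2(\eta)}{\|\eta-K(\eta)\|}I'(\eta)[\eta-K(\eta)]\le0,$$
which is Lemma \ref{KTI1}-(iii).

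For (iii), take $u\in\mathcal C_*$ with $|I(u)-c|<\bar\varepsilon$. Since $I(\eta(t,u))\le I(u)<c+\bar\varepsilon$ for all $t$, Lemma \ref{lem:eta_bounded} yields $\|\eta(t,u)\|\le C_1(c)$, and since $\tilde T$ is compact (Proposition \ref{Ttildecompact1}), $\|\tilde T(\eta(t,u))\|\le C_2(c)$; because $p-2<0$, this forces $(\|\eta\|+\|\tilde T(\eta)\|)^{p-2}\ge c_0>0$ for a constant $c_0=c_0(c)$. Using Lemma \ref{KTI1}-(ii), $\|\eta-\tilde T(\eta)\|^2/\|\eta-K(\eta)\|\ge\tfrac12\|\eta-\tilde T(\eta)\|\ge\tfrac14\|\eta-K(\eta)\|$, so from Lemma \ref{KTI1}-(iii), wherever $\chi_1(I(\eta))=\chi_2(\eta)=1$ one gets $\frac{d}{dt}I(\eta(t,u))\le-\frac{a\,c_0\,\bar\delta}{8}=:-\kappa<0$. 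The dichotomy is then exactly as in Lemma \ref{deformation}: either $I(\eta(s,u))\le c-\bar\varepsilon$ for some $s\le t$ (and (ii) concludes), or $c-\bar\varepsilon<I(\eta(s,u))\le I(u)<c+\bar\varepsilon$ for all $s\in[0,t]$, whence $\chi_1(I(\eta(s,u)))=1$ and, by Lemma \ref{conseqPS1}-(ii), $\|\eta(s,u)-K(\eta(s,u))\|\ge\bar\delta$ so $\chi_2(\eta(s,u))=1$, giving $I(\eta(t,u))\le I(u)-\kappa t<c+\bar\varepsilon-\kappa t\le c-\bar\varepsilon$ as soon as $t\ge2\bar\varepsilon/\kappa$. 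Defining, with abuse of notation, $\eta(u):=\eta(2\bar\varepsilon/\kappa,u)$ then produces a map with properties (i)--(iv).

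I expect the only genuine difficulty to be the degenerate weight $(\|u\|+\|\tilde T(u)\|)^{p-2}$ in \eqref{J1}: in the case $p>2$ the analogous factor in \eqref{J2} was harmless, whereas here it may vanish as the norms grow, so the flow must be confined to a bounded region in order to extract a strictly positive dissipation rate $\kappa$. This is precisely what Lemma \ref{lem:eta_bounded} supplies --- the sublevel sets of $I$ inside $\mathcal C_*$ are bounded, thanks to the constraint $u\le u_+<\infty$ coming from $(g_4)$ --- and it is the reason that lemma is needed in this section; everything else is a routine adaptation of the $p>2$ argument.
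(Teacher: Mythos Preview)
Your proposal is correct and follows essentially the same route as the paper: define $\Phi$ and the flow exactly as in Lemma~\ref{deformation}, establish cone-invariance via the Euler-polygonal argument, and for property~(iii) bound $\|\eta(s,u)\|+\|\tilde T(\eta(s,u))\|$ uniformly (using Lemma~\ref{lem:eta_bounded} together with the compactness of $\tilde T$) so that the weight $(\|\eta\|+\|\tilde T(\eta)\|)^{p-2}$ is bounded below by a positive constant, yielding a uniform decay rate. Your identification of this weight as the only new obstacle, and of $(g_4)$ via Lemma~\ref{lem:eta_bounded} as its remedy, matches the paper's reasoning exactly.
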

\begin{proof} We define $\eta(t,u)$ as in the first part of the proof of Lemma \ref{deformation}. For all $u\in\mathcal C_*$ and $t>0$ we can write
\begin{equation}\label{eq:flusso_decrescente1}
\begin{aligned}I(\eta(t,u))-I(u)&=\int_0^t\frac{d}{ds}I(\eta(s,u))ds\\
&\hspace{-2.5cm}=-\int_0^t\frac{\chi_1(I(\eta(s,u)))\chi_2(\eta(s,u))}{\|\eta(s,u)-K(\eta(s,u))\|}I'(\eta(s,u))[\eta(s,u)-K(\eta(s,u))]ds\\
&\hspace{-2.5cm}\le-\frac{a}2\displaystyle{\int_0^t\dfrac{\|\eta(s,u)-\tilde T(\eta(s,u))\|^2 \chi_1(I(\eta(s,u)))\chi_2(\eta(s,u))}{\|\eta(s,u)-K(\eta(s,u))\| (\|\eta(s,u)\|+\|\tilde T(\eta(s,u))\|)^{2-p}}}
ds\le0,
\end{aligned}
\end{equation}
where we have used the inequality in Lemma \ref{KTI1}-(iii).

Now, let $u\in\mathcal C_*$ be such that $|I(u)-c|<\bar\varepsilon$ and let $t$ be sufficiently large. Then, two cases arise: either there exists $s\in[0,t]$ for which $I(\eta(s,u))\le c-\bar\varepsilon$ and so, by the previous calculation we get immediately that $I(\eta(t,u))\le c-\bar\varepsilon$, or for all $s\in[0,t]$, $I(\eta(s,u))> c-\bar\varepsilon$. In this second case, 
$$c-\bar\varepsilon< I(\eta(s,u))\le I(u)< c+\bar\varepsilon.$$
In particular, by Lemma \ref{conseqPS1}-(i), $\eta(s,u)\in W$. By the definitions of $\chi_1$ and $\chi_2$, and by Lemma \ref{conseqPS1}-(ii), it results that for all $s\in[0,t]$ 
$$\chi_1(I(\eta(s,u)))=1,\quad\|\eta(s,u)-K(\eta(s,u))\|\ge\bar\delta,\quad\mbox{and}\quad\chi_2(\eta(s,u))=1.$$
Moreover, being $\eta(s,u)\in \mathcal{C}_*$ for every $s\in[0,t]$, Lemmas \ref{lem:eta_bounded} and \ref{Ttildecompact1} provide the existence of a constant $\tilde C$ such $\|\eta(s,u)\|+\|\tilde T(\eta(s,u))\|\le \tilde C$.
Hence, by \eqref{eq:flusso_decrescente1} and Lemma \ref{KTI1}-(ii)-(iii), we obtain
$$I(\eta(t,u))\le I(u)-\frac{a\bar\delta t }{8\tilde C^{2-p}}, 
$$
so that $I(\eta(t,u))\le c-\bar\varepsilon$ for $$t\ge\frac{16\bar\varepsilon \tilde C^{2-p}}{a\bar\delta}.$$

Finally, if we define with abuse of notation $$\eta(u):=\eta\left(\frac{16\bar\varepsilon \tilde C^{2-p}}{a\bar\delta},u\right),$$ 
we have proved that $\eta$ satisfies (ii) and (iii). Properties (i) and (iv) are immediate. The fact that $\eta$ preserves the cone can be proved as in Lemma \ref{deformation}, since $\tilde T(\mathcal C_*)\subset \mathcal C_*$ also for $1<p<2$.
\end{proof}

\begin{proof}[$\bullet$ Proof of Proposition \ref{mountainpass1}] The preliminary results shown in this appendix allow us to prove Proposition \ref{mountainpass1} by proceeding as in the proof of Proposition \ref{mountainpass}.  
\end{proof}

In the case $1<p<2$, we cannot conclude that the mountain pass solution found in Proposition \ref{mountainpass1} is nonconstant.
In particular, Proposition \ref{propp<2} below implies that Lemma \ref{4.9} does not hold for $1<p<2$ and $g(u)=u^{q-1}$. 

Since we are in the pure power case, we refer to the truncated nonlinearity $\tilde f$ defined in \eqref{truncatedpurepower}, namely 
$$
\tilde{f}(s):=\begin{cases}s^{q-1}\quad&\mbox{if }s\in[0,s_0],\\
s_0^{q-1}+\frac{q-1}{\ell-1}s_0^{q-\ell}(s^{\ell-1}-s_0^{\ell-1}) &\mbox{otherwise}.\end{cases}
$$
for some fixed $s_0>\max\{K_{\infty},M\}$ and $\ell\in(p,p^*)$. We introduce the Nehari manifold
\begin{equation}\label{eq:nehari_def2}
\mathcal N:=\{u\in W^{1,p}(B)\setminus\{0\}\,:\, I'(u)[u]=0\}.
\end{equation}

\begin{proposition}\label{propp<2}
Let $1<p<2$ and $g(u)=u^{q-1}$, $q>p$. For every nonconstant $v\in W^{1,p}(B)\cap L^\infty(B)$ there exists $\varepsilon_1(v)>0$ such that the following properties hold for any $0<s<\varepsilon_1(v)$:
\begin{itemize}
\item[(i)] there exists a unique $h(s)>0$ such that $h(s)(1+sv) \in \mathcal{N}$;
\item[(ii)] $I(h(s)(1+sv))-I(1)>0$.
\end{itemize}
\end{proposition}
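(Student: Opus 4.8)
The plan is to handle the two parts separately, reducing (ii) to the elementary observation that, when $p<2$, perturbing the constant $1$ by $sv$ \emph{raises} the energy because the gradient term contributes at the dominant order $s^p$.

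For part (i), set $u_s:=1+sv$. Since $v$ is nonconstant, $\|v\|_{L^\infty(B)}>0$, and for $0<s<1/\|v\|_{L^\infty(B)}$ one has $u_s>0$ a.e.\ in $B$ and $u_s\neq 0$ in $W^{1,p}(B)$. I would argue exactly as in Lemma~\ref{gH}, studying the fibering map $\phi_s(t):=I(tu_s)$, $t\ge 0$: one has $\phi_s(0)=0$; $\phi_s(t)>0$ for $t$ small (there $tu_s<s_0$, so $\tilde f(tu_s)=(tu_s)^{q-1}$ and the term of order $t^p$ dominates the one of order $t^q$); and $\phi_s(t)\to-\infty$ as $t\to\infty$ by the growth $\tilde f(r)\sim d\,r^{\ell-1}$ with $\ell>p$, as in \eqref{g<>0}. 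Hence $\phi_s$ attains its maximum over $[0,\infty)$ at some $h(s)>0$, and $h(s)u_s\in\mathcal N$. Uniqueness of the positive critical point of $\phi_s$ follows from \eqref{eq:f/s_increasing}: since $u_s>0$ a.e., the map $t\mapsto t^{1-p}\int_B\tilde f(tu_s)u_s\,dx=\int_B \frac{\tilde f(tu_s)}{t^{p-1}}u_s\,dx$ is strictly increasing, so $\phi_s'(t)=0$ has at most one solution in $(0,\infty)$. In particular $h(s)$ is the global maximizer of $\phi_s$ on $[0,\infty)$.

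For part (ii), the crucial point is precisely that $h(s)$ maximizes $\phi_s$, whence $I(h(s)u_s)=\phi_s(h(s))\ge \phi_s(1)=I(u_s)=I(1+sv)$; so it suffices to prove $I(1+sv)>I(1)$ for small $s>0$. Since $\nabla(1+sv)=s\nabla v$, one has
\begin{equation*}
I(1+sv)-I(1)=\frac{s^p}{p}\int_B|\nabla v|^p\,dx+R(s),\qquad R(s):=\int_B\Big[\frac{|1+sv|^p-1}{p}-\tilde F(1+sv)+\tilde F(1)\Big]dx.
\end{equation*}
For $|s|$ small, $1+sv$ stays a.e.\ in a compact subinterval of $(0,s_0)$, on which $r\mapsto r^p$ and $\tilde f(r)=r^{q-1}$ are smooth; since moreover $v\in L^\infty(B)$, $R$ is of class $C^2$ near $s=0$ with $R(0)=0$ and, because $1$ is a critical point of $I$ (indeed $I'(1)[v]=\int_B(v-\tilde f(1)v)\,dx=0$ as $\tilde f(1)=1$), also $R'(0)=0$; hence $R(s)=O(s^2)$. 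Since $0<p<2$ and $v$ is nonconstant (so $\|\nabla v\|_{L^p(B)}>0$), for $s$ small
\begin{equation*}
I(1+sv)-I(1)=\frac{s^p}{p}\,\|\nabla v\|_{L^p(B)}^p\,\big(1+o(1)\big)>0,
\end{equation*}
and one picks $\varepsilon_1(v)$ small enough to also accommodate the constraints of part (i).

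The computations are routine; the only point that carries the content — and the reason the statement fails for $p\ge 2$ — is the order comparison $s^2=o(s^p)$ as $s\to0^+$ when $p<2$, which makes the positive gradient contribution $\frac{s^p}{p}\|\nabla v\|_{L^p(B)}^p$ dominate the (possibly negative) second-order remainder $R(s)$; this is exactly the mechanism of Lemma~\ref{4.9}-(ii) run in reverse. The only mild technicality is checking that $R\in C^2$ near $0$ with $R(0)=R'(0)=0$, which hinges on keeping $1+sv$ inside the pure-power regime $[0,s_0]$ of $\tilde f$ and bounded away from $0$.
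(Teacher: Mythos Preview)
Your argument is correct, and for part (ii) it is actually a bit cleaner than the paper's. The paper exploits the pure-power structure to write down $h(s)$ explicitly as
\[
h(s)=\left(\frac{\int_B\bigl(s^p|\nabla v|^p+|1+sv|^p\bigr)\,dx}{\int_B|1+sv|^q\,dx}\right)^{1/(q-p)},
\]
then expands $h(s)=1+h'(0)s+o(s)$ and computes $I(h(s)(1+sv))-I(1)$ directly, obtaining the same leading term $\tfrac{s^p}{p}\|\nabla v\|_{L^p}^p$ with a lower-order remainder. Your route sidesteps all knowledge of $h(s)$ beyond the fact that it maximizes the fibering map: from $I(h(s)(1+sv))\ge I(1+sv)$ you reduce (ii) to a computation on $I(1+sv)-I(1)$, where the gradient contribution is exactly $\tfrac{s^p}{p}\|\nabla v\|_{L^p}^p$ and the remaining potential part is $O(s^2)$ because $1$ is a critical point of $I$. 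The advantage of your approach is that it does not use the explicit closed form of $h(s)$ (so it would survive mild perturbations of the nonlinearity), and it makes the mechanism transparent: the inequality $p<2$ enters only through $s^2=o(s^p)$. The paper's approach, on the other hand, yields the sharper asymptotic for $I(h(s)(1+sv))-I(1)$ itself, not just a lower bound.
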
 
\begin{proof} Let $v\in W^{1,p}(B)\cap L^\infty(B)$ be nonconstant. Since $\tilde f\equiv g$ in $[0,s_0]$, from the definition of $\mathcal N$ we can compute explicitly 
\[
h(s)=\left( \frac{\int_B(s^p|\nabla v|^p+|1+sv|^p)dx}{\int_B |1+sv|^qdx} \right)^\frac{1}{q-p}\quad\mbox{for every }0<s<\varepsilon_1(v):=\frac{s_0-1}{\|v\|_{L^\infty(B)}}.
\]
Hence we see that $h(s)$ is unique and regular in $s$. Therefore the proof of (i) is concluded. 

In order to prove (ii), we write the Taylor expansion at the first order of $h$
\[
h(s)=1+h'(0)s+o(s).
\]
By explicit calculations, we arrive at
\[
I(h(s)(1+sv))-I(1)= \frac{s^p}{p}\int_B |\nabla v|^p dx +o(s).
\]
Since $v$ is nonconstant, the statement follows.
\end{proof}

\section*{Acknowledgments}
\noindent F. Colasuonno was partially supported by the INdAM - GNAMPA Project 2016 ``Fenomeni non-locali: teoria, metodi e applicazioni". B. Noris was partially supported by the project ERC Advanced Grant 2013 n. 339958: ``Complex Patterns for Strongly Interacting Dynamical Systems --
COMPAT''. 

\bibliographystyle{abbrv}
\bibliography{biblio}

\end{document}